\newcommand{\mb}{\mathbb}
\newcommand{\mf}{\mathfrak}
\newcommand{\mc}{\mathcal}
\newcommand{\ov}{\overline}
\newcommand{\ora}{\overrightarrow}
\newcommand{\wt}{\widetilde}
\numberwithin{equation}{section}
\title{$U(1)$-vortices and quantum Kirwan map}
\author[Xu]{Guangbo Xu}
\address{
Department of Mathematics \\
Princeton University \\
Fine Hall, Washington Road \\
Princeton, NJ 08544 USA \\
312-646-9515
}
\email{guangbox@math.princeton.edu}
\begin{document}

	\newtheorem{thm}{Theorem}[section]
	\newtheorem{lemma}[thm]{Lemma}
	\newtheorem{cor}[thm]{Corollary}
	\newtheorem{prop}[thm]{Proposition}
	
	\theoremstyle{definition}
	\newtheorem{defn}[thm]{Definition}
	
	\theoremstyle{remark}
	\newtheorem{rem}[thm]{Remark}
	\newtheorem{hyp}[thm]{Hypothesis}
	\newtheorem{example}[thm]{Example}

\setcounter{tocdepth}{1}

\maketitle

\begin{abstract}
We study the symplectic vortex equation over the complex plane, for the target space ${\mb C}^N$ ($N\geq 2$) with diagonal $U(1)$-action. We classify all solutions with finite energy and identify their moduli spaces, which generalizes Taubes' result for $N=1$. We also studied their compactifications and use them to compute the associated quantum Kirwan maps $\kappa_Q: H^*_{U(1)}({\mb C}^N) \to QH^*({\mb P}^{N-1})$.
\end{abstract}

\tableofcontents

\section{Introduction}

This note is devoted to the understanding of the geometry of the vortex equation over the complex plane and their moduli spaces. The vortex equation over ${\mb C}$ and the moduli spaces of solutions are the central objects in the project of Ziltener (cf. \cite{quantumkirwan}) to define a quantum version of the Kirwan map for symplectic manifold $X$ with Hamiltonian action by a compact Lie group $G$. Here we work with a concrete example, where, the target manifold is the vector space ${\mb C}^N$ acted by $U(1)$ via complex multiplication and the symplectic quotient of this action is the projective space ${\mb P}^{N-1}$. 

Many years ago, in \cite{Taubes_vortex}, Taubes gave the classification of finite energy vortices in the case where the target $X= {\mb C}$ with $U(1)$-action. The moduli space for each ``vortex number'' $d>0$ is ${\rm Sym}^d {\mb C}$, the $d$-fold symmetric product of the complex plane. For target ${\mb C}^N$ with $N\geq 2$ and the diagonal $U(1)$-action there was no result of either construction of nontrivial solutions or classification, until very recently Venugopalan-Woodward (\cite{VW_Classification}) claim that, for target manifold a projective variety acted by a reductive Lie group (including our case) using heat flow method one can identify the solutions to the vortex equation and algebraic maps from ${\mb C}$ to the ``quotient stack'' $X/G$ with certain condition on the assymptotic behavior at infinity.

For the special example considered in this note, we take a different approach which can be certainly extended to more general situations. This approach contains three basic ingredients here. The first one is the adiabatic limit analysis for vortex equation over a compact Riemann surface $\Sigma$ with growing area form, most of which is provided by \cite{Gaio_Salamon_2005}. In particular, it implies that (in the symplectic aspherical case) as we grow the area form on $\Sigma$, the energy density of solutions blows up at most in the same rate as we enlarging the surface. The second ingredient is the Hitchin-Kobayashi correspondence for stable $N$-pairs over the compact $\Sigma$ provided by Bradlow's theorem \cite{Bradlow_stable_pairs} (see Theorem \ref{thm21}). This correspondence works for any large area form on the domain curve, hence we can identify moduli spaces of solutions for different area forms with the same algebraic moduli space. The third ingredient is the observation that the energy concentration of a sequence of solutions has an algebraic description (in our example this means a sequence of holomorphic $N$-pairs develops a base point). With the understanding of these ingredients, we can manipulate the bubbling in the algebraic moduli space and construct all possible solutions. 

It is worth pointing out that our approach is very elementary, and by looking at concrete examples, it helps understand the behavior of the vortices (for example, why they merge to the moment level surface and converge to holomorphic spheres in the symplectic quotient). One can also generalize this method to other cases, for example, toric manifolds and flag manifolds, both as symplectic quotients of Euclidean spaces.

A purpose of classifying affine vortices and identifying their moduli spaces is to compute the quantum version of the Kirwan map, which was proposed by D. Salamon and a rigorous definition relies on an ongoing project of Ziltener. In the case we considered in this note, we can identify the moduli space and a natural compactification (we call it the Uhlenbeck compactification), which we can use to compute the quantum Kirwan map $\kappa_Q: H_{U(1)}^* \left( {\mb C}^N \right) \to QH^* \left( {\mb P}^{N-1} \right)$. We also identified the stable map compactification of the moduli space of nontrivial affine vortices of the lowest degree.

\subsection*{Organization}

The first half of this note is about general theory: Section \ref{section2} is on preliminaries of symplectic vortex equation, which includes the example of stable $N$-pairs. Section \ref{section3} is a review of previous work of Ziltener on affine vortices and its relation with vortex equation over a compact Riemann surface via the adiabatic limit. In Section \ref{section4} we consider the compactification of the moduli space of affine vortices, where we give some refinement of definitions of Ziltener. In Section \ref{section5} we review the (formal) definition of the quantum Kirwan map.

In the second half we restrict to the special case for $U(1)$-action on ${\mb C}^N$. In Section \ref{section6} we give a detailed description of the vortex bubbling phenomenon in the adiabatic limit. In Section \ref{section7} we use the adiabatic limit trick to give a classification of finite energy affine vortices which generalizes Taubes' classification for $N=1$; we also identify its moduli space and compute the associated quantum Kirwan map.

\subsection*{Acknowledgements}

The author would like to thank his advisor Professor Gang Tian for help and encouragement. He also would like to thank Chris Woodward, Sushmita Venugopalan and Fabian Ziltener for inspiring discussions.

\section{Symplectic vortex equation}\label{section2}

\subsection{Vortex equation}

Let $\left( M, \omega\right)$ be a symplectic manifold. Suppose $G$ is a compact Lie group acting on $M$ smoothly. Then for any $\xi \in {\mf g}$, the infinitesimal action of $\xi$ is the vector field ${\mc X}_\xi$ whose value at $p\in M$ is
\begin{align}
{\mc X}_\xi (p) = \left. {d\over dt}\right|_{t=0} \exp(t\xi) p. 
\end{align}
The action is Hamiltonian, if there exists a smooth map $\mu: M \to {\mf g}^*$ (the moment map) such that
\begin{align}
\iota_{{\mc X}_\xi} \omega = d \langle \mu, \xi \rangle_{\mf g}.
\end{align}
Also, we take a $G$-invariant, $\omega$-compatible almost complex structure on $J$. 

Let $\Sigma$ be a Riemann surface and we usually omit to mention its complex structure $j: T\Sigma \to T\Sigma$. Let $\Omega_\Sigma\in \Omega^2 (\Sigma)$ be a smooth area form. A {\it twisted holomorphic map} from $\Sigma$ to $M$ is a triple $\left( P, A, u \right)$, where $P \to \Sigma$ is a smooth principal $G$-bundle, $A$ is a smooth $G$-connection on $P$ and $u$ is a smooth section of the associated bundle $Y:= P\times_G M \to \Sigma$, satisfing the following {\bf symplectic vortex equation}:
\begin{align}\label{vortexeqn}
\left\{ \begin{array}{ccc}  \ov\partial_A u & = & 0;\\
                             \Lambda F_A + \mu(u) & = & 0.          \end{array}\right.
\end{align}
Here $\ov\partial_A u \in \Gamma\left( \Sigma, \Omega^{0, 1}\otimes u^*T^V Y \right)$ where $T^VY \to Y$ is the vertical tangent bundle; $\Lambda: \Omega^2(\Sigma) \to \Omega^0(\Sigma)$ is the contraction with respect to the area form $\Omega_\Sigma$; and for the second equation to make sense, we identify ${\mf g}$ with ${\mf g}^*$ via an ${\rm Ad}$-invariant inner product on ${\mf g}$. With respect to a local trivialization $P|_U = U \times G$ and a local holomorphic coordinate $z= s+ i t$ on $U$, $u$ corresponds to a map $\phi: U \to M$ and $A = d + \Phi ds + \Psi dt$, the equation (\ref{vortexeqn}) reads
\begin{align}\label{24}
\left\{\begin{array}{ccc}  \partial_s u - {\mc X}_{\Phi}(u) + J(u) \left( \partial_t u - {\mc X}_{\Psi}(u) \right) & = & 0;\\
                           \left( \partial_s \Psi - \partial_t \Phi + [ \Phi, \Psi]\right) ds dt  + \mu(u) \Omega_\Sigma & = & 0.                  \end{array} \right.
\end{align}

A solution $(P, A, u)$ is sometimes called a {\bf twisted holomorphic map} from $\Sigma$ to $M$. We say that two solutions $(P, A, u)$ and $(P', A', u')$ are equivalent, if there is a bundle isomorphism $\rho: P' \to P$ which lifts the identity map on $\Sigma$, such that $\rho^*(A, u) = (A', u')$. Denote by ${\mc G}(P)$ the group of smooth gauge transformations, which consists of smooth maps $g: P \to G$ with $g(p h) = h^{-1} g(p) h$. It acts on the space of solutions on the right, by 
\begin{align}
g^* \left( A, u \right) = \left( g^*A , g^{-1} u \right).
\end{align}
If written in a coordinate form as in (\ref{24}), $g$ corresponds to a smooth map $g: U \to G$ and the action is given by 
\begin{align}
g^* \left( d + \Phi ds + \Psi dt, \phi \right) = \left( d+ \left( {\rm Ad}_g^{-1} \Phi - g^{-1} \partial_s g\right) ds + \left( {\rm Ad}_g^{-1} \Psi - g^{-1} \partial_t g \right) dt , g^{-1} \phi \right). 
\end{align}
The Lie algebra of ${\mc G}(P)$ is the space of smooth sections of the vector bundle $P\times_{{\rm ad}} {\mf g}$ and for any section $s$, the infinitesimal action of $s$ is 
\begin{align}
{\mc X}_s \left( A, u \right) = \left( - d_A s,  - {\mc X}_s(u)  \right).
\end{align}

The energy of a twisted holomorphic map $(P, A, u)$ is given by the Yang-Mills-Higgs functional
\begin{align}
\mc{YMH}(A, u)= {1\over 2} \left( \left\| F_A\right\|_{L^2}^2 + \left\| \mu(u) \right\|_{L^2}^2 + \left\| d_A u \right\|_{L^2}^2 \right).
\end{align}
Here the $L^2$-norms are defined with respect to the Riemannian metric on $M$ determined by $\omega$ and $J$, and the Riemannian metric on $\Sigma$ determined by $\Omega_\Sigma$ and $j$.

\subsection{Example: holomorphic $N$-pairs}\label{subsection22}

Let $M= {\mb C}^k$ and let $G= U(k)$ which acts on ${\mb C}^k$ via the standard linear action. For the symplectic form
\begin{align}
\omega = \sum_{i=1}^k dx_i \wedge d y_i = {\sqrt{-1} \over 2} \sum_{i=1}^k dz_i \wedge d\ov{z}_i
\end{align}
a moment map is
\begin{align}
\mu(z_1, \ldots, z_k ) = -{\sqrt{-1} \over 2} \left( \sum_{i=1}^k z_i \otimes \ov{z}_i^T - \tau I_k \right) \in {\mf u}(k) \simeq {\mf u}(k)^*.
\end{align}
If $(P, A, u)$ is a twisted holomorphic map from $\Sigma$ to ${\mb C}^k$, then the associated bundle $E:= P\times_{U(k)} {\mb C}^k$ is a complex vector bundle with a Hermitian metric such that $P$ is the unitary frame bundle of $E$. The $(0, 1)$-component of $A$ defines a holomorphic structure on $E$ and $A$ is then the Chern connection determined by $\ov\partial_A$ and the Hermitian metric. The section $u: P \to {\mb C}^k$ corresponds to a holomorphic section of the bundle $\left( E, \ov\partial_A \right)$. A pair $(E, \ov\partial_A, u )$ with $\ov\partial_A u $ is called a rank $k$ holomorphic pair.

More generally, we make take $N$ copies of ${\mb C}^k$ and $U(k)$ acts on the $N$ copies in a diagonal way, and the moment map is the sum of the $N$ moment maps. In this case, a twisted holomorphic map corresponds to a rank $k$ holomorphic vector bundle $E$ with $N$ holomorphic sections, which is called a rank $k$ holomorphic $N$-pair. 

In the following we will only care about the abelian case, i.e., $k=1$. We can take $\tau = 1$ without loss of generality. A holomorphic $N$-pair $\left( {\mc L}; \varphi_1, \ldots, \varphi_N \right)$ is called {\bf stable}, if at least one of $\varphi_j$ is nonzero. 

We have the following important theorem, which is a special case of the celebrated {\it Hitchin-Kobayashi correspondence} (cf. \cite{Mundet_Hitchin_Kobayashi}).
\begin{thm}\cite{Bradlow_stable_pairs}\label{thm21} For any compact Riemann surface $\Sigma$ with any smooth area form $\Omega_\Sigma$ with ${\rm Area}\Omega_\Sigma > 4\pi d$, for any stable rank 1 holomorphic $N$-pair $\left( {\mc L}; \varphi_1, \ldots, \varphi_N \right)$ over $\Sigma$ with ${\rm deg} {\mc L}=d$, there exists a unique smooth Hermitian metric $H$ which solves the vortex equation, i.e., the following equation is satisfied:
\begin{align}
 F_H  - {\sqrt{-1} \over 2} \left( \sum_{j=1}^N \left| \varphi_j \right|_H^2 - 1 \right) \Omega_\Sigma = 0.
\end{align}
Here $F_H$ is the Chern connection of $\left( {\mc L}, H \right)$.
\end{thm}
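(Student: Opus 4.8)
The plan is to reduce the curvature equation to a single scalar semilinear elliptic PDE on $\Sigma$ and to solve it by the direct method of the calculus of variations. Since $\mc L$ has rank one, any two Hermitian metrics differ by a positive function, so I would fix a smooth reference metric $H_0$ and write every competitor as $H = H_0 e^{2\psi}$ with $\psi \in C^\infty(\Sigma, \mb R)$. Under this conformal change the Chern curvature transforms by $F_H = F_{H_0} + 2\ov\partial\partial\psi$ and the pointwise norms by $|\varphi_j|_H^2 = |\varphi_j|_{H_0}^2 e^{2\psi}$. Contracting the vortex equation with $\Lambda$ and using the Riemann-surface identity $2\sqrt{-1}\,\Lambda\,\ov\partial\partial\psi = \Delta\psi$, where $\Delta$ is the nonnegative Laplacian of $(\Sigma,\Omega_\Sigma)$, the equation for $H$ becomes the Kazdan--Warner type equation
\begin{align}
\Delta\psi + \frac{1}{2}\Big( e^{2\psi} \sum_{j=1}^N |\varphi_j|_{H_0}^2 - 1 \Big) = g_0,
\end{align}
where $g_0 := -\sqrt{-1}\,\Lambda F_{H_0}$ is a fixed smooth function satisfying $\int_\Sigma g_0\, \Omega_\Sigma = -2\pi d$.

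Before constructing a solution I would record the integral obstruction, which explains the hypothesis. Integrating the scalar equation over the closed surface $\Sigma$ annihilates the Laplacian term and leaves
\begin{align}
\int_\Sigma \Big( \sum_{j=1}^N |\varphi_j|_H^2 \Big)\, \Omega_\Sigma = {\rm Area}(\Omega_\Sigma) - 4\pi d.
\end{align}
The left-hand side is nonnegative, and strictly positive precisely by the stability assumption that some $\varphi_j\not\equiv 0$; hence the hypothesis ${\rm Area}(\Omega_\Sigma) > 4\pi d$ is forced. This is the Bradlow stability inequality, and it is exactly the quantity that must be exploited on the existence side.

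For existence I would realize the scalar equation as the Euler--Lagrange equation of
\begin{align}
\mc F(\psi) = \int_\Sigma \Big( \frac{1}{2}|d\psi|^2 + \frac{1}{4} e^{2\psi} \sum_{j=1}^N |\varphi_j|_{H_0}^2 \Big)\, \Omega_\Sigma - \int_\Sigma \ell\, \psi\, \Omega_\Sigma
\end{align}
on $W^{1,2}(\Sigma,\mb R)$, where $\ell := \frac{1}{2} + g_0$ satisfies $\int_\Sigma \ell\, \Omega_\Sigma = \frac{1}{2}\big( {\rm Area}(\Omega_\Sigma) - 4\pi d\big) > 0$. This functional is strictly convex --- the Dirichlet term is convex, $e^{2\psi}$ is strictly convex in $\psi$, and the last term is linear --- so strict convexity gives uniqueness at once. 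The crux is coercivity. Splitting $\psi = \ov\psi + \psi_0$ into its mean $\ov\psi$ and a zero-average part $\psi_0$, the Poincar\'e inequality controls $\psi_0$ through the Dirichlet energy, and it remains to test the constant direction: as $\ov\psi\to+\infty$ the exponential term dominates and forces $\mc F\to+\infty$, here using stability so that $\sum_j|\varphi_j|_{H_0}^2\not\equiv 0$; as $\ov\psi\to-\infty$ the exponential vanishes and coercivity comes from the linear term, whose contribution $-\ov\psi\int_\Sigma \ell\,\Omega_\Sigma = -\frac{1}{2}\big({\rm Area}(\Omega_\Sigma) - 4\pi d\big)\ov\psi\to+\infty$ precisely by the area hypothesis. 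Thus $\mc F$ is coercive and weakly lower semicontinuous, a minimizer exists, elliptic regularity bootstraps it to a smooth solution, and strict convexity yields uniqueness.

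The main obstacle is exactly this coercivity estimate, since it is here that both hypotheses are consumed and the two limits $\ov\psi\to\pm\infty$ must be balanced simultaneously: one needs the genuine presence of the exponential (stability) to control $+\infty$ and the strict sign of the linear drift (the area inequality) to control $-\infty$, while the cross terms coupling $\ov\psi$ and $\psi_0$ are handled through a Moser--Trudinger estimate on $\int_\Sigma e^{2\psi_0}\,\Omega_\Sigma$. Once the a priori $W^{1,2}$ bound is in hand, the remaining steps --- lower semicontinuity, $L^\infty$ and Schauder bootstrap to $C^\infty$, and uniqueness --- are standard; alternatively the same solution can be produced by the method of sub- and supersolutions, the area condition again furnishing the ordered barrier needed to start the monotone iteration.
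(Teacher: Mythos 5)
The paper does not prove this statement at all---it is quoted as Theorem \ref{thm21} directly from Bradlow's paper---so there is no internal proof to compare against; your argument is, however, correct and is essentially the standard (and Bradlow's original, rank-one) proof: reduce to the scalar Kazdan--Warner equation via $H = H_0 e^{2\psi}$, observe the integral identity forcing ${\rm Area}\,\Omega_\Sigma > 4\pi d$, and solve by the direct method, with stability and the area bound entering exactly where you say, in the coercivity of the convex functional. This is also precisely the reduction the paper itself uses later (equation (\oldstylenums{6}.\oldstylenums{3}) and the uniqueness pairing argument of Lemma \ref{lemma73}), so your route is fully consistent with the machinery of the paper.
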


\section{Affine vortices}\label{section3}

We now restrict to the case $\Sigma = {\mb C}$ and $\Omega_{\mb C} = ds \wedge dt$ the standard area form. We call a solution to the vortex equation (\ref{vortexeqn}) in this case an {\it affine vortex}. All $G$-bundles over ${\mb C}$ are trivial and isomorphisms between them are all isotopic, so we will work solely with ``the'' trivial bundle $P = {\mb C}\times G$. So a connection $A$ will be written canonically as $d + \alpha$ with $\alpha \in \Omega^1( {\mb C}, {\mf g})$ and the section $u$ corresponds canonically to a map $u: {\mb C} \to M$. A gauge transformation is then a map $g: {\mb C}\to G$.

The general theory for vortices over ${\mb C}$ initiated from the paper of Gaio-Salamon \cite{Gaio_Salamon_2005} and a lot of analytic framework has been settled down by Fabian Ziltener in \cite{quantumkirwan}. The algebraic theory of these objects are also studied in \cite{chris_quantum_kirwan}.

First of all, we make several assumptions on the manifold $(M,\omega)$ and the action. They are satisfied, for example, in the case of Subsection \ref{subsection22}.

\begin{hyp} We assume

\begin{enumerate}
\item $(M, \omega)$ is aspherical, i.e., for any embedded sphere $S^2\subset M$,
\begin{align}
\int_{S^2} \omega = 0.
\end{align}

\item
The moment map $\mu$ is proper, and $0$ is a regular value, such that the restriction of the $G$-action on $\mu^{-1}(0)$ is free. 

\item There exists a $G$-invariant, $\omega$-compatible almost complex structure $J$ such that the tuple $\left( M, \omega, J, \mu\right)$ is convex at infinity. This means there exists a proper $G$-invariant function $f: M \to [0, \infty)$ and a constant $C>0$ such that for any $(x, v) \in TM$ with $f(x) \geq C$, 
\begin{align}
\omega( \nabla_v \nabla f (x) , J(x) v) - \omega ( \nabla_{Jv} \nabla f (x), v) \geq 0,\ \omega( {\mc X}_{\mu(x)}, \nabla f ) \geq 0.
\end{align}
\end{enumerate}
\end{hyp}

We give two examples of affine vortices.
\begin{example} An affine vortex is called trivial if it is equivalent to $(A, u)$ where $A = d $ is the trivial connection on the trivial bundle, and $u: {\mb C} \to M$ is a constant map with value in $\mu^{-1}(0)$. In particular, an affine vortex is trivial if and only if it has zero energy.
\end{example}

\begin{example}\label{taubesexample} In the case $M= {\mb C}$, $U(1)$ acts on ${\mb C}$ by complex multiplication with moment map $\mu(z) = -{\sqrt{-1} \over 2} (|z|^2-1)$, Taubes (see \cite{Taubes_vortex}, \cite{Jaffe_Taubes}) classified all planary vortices with finite energy. More precisely, for any ``vortex number'' $d>0$ and for any $d$-tuple of unordered points $z_1, \ldots, z_d \in {\mb C}$, there is a unique solution (up to gauge) which is of the form $$(A, u) = \left( d - \partial h + \ov\partial h, e^{-h} (z-z_1)\cdots (z-z_d)\right).$$ Here $h$ is the unique solution to the {\it Kazdan-Warner equation} over ${\mb C}$:
\begin{align}
\Delta h + {1\over 2} \left( e^{- 2h} \prod_{j=1}^d |z- z_j|^2 - 1 \right) = 0.
\end{align}
\end{example}

There are two natural classes of symmetry of ${\mb C}$, the translations and rotations, with respect to which the equation is invariant. But there exsits solutions which have infinitely many rotational symmetry, which will result in non-smooth moduli space. Hence we won't identify two solutions if they differ by a rotation.
\begin{defn}\label{defn34}
An isomorphism from $(A_1, u_1)$ to $(A_2, u_2)$ is a pair $(t, g)$, where $t: {\mb C}\to {\mb C}$ is a translation, which lifts naturally to a bundle map between the trivial $G$-bundles, and $g$ is a gauge transformation $g: {\mb C}\to G$, such that $g^* t^* \left( A_1, u_1 \right) = \left( A_2, u_2 \right)$.
\end{defn}

We have the regularity modulo gauge tranformation. 
\begin{prop}\cite[Proposition D.2]{Ziltener_thesis} For any solution $(A, u)$ of class $W^{1, p}_{loc}$, there exists a gauge tranformation $g$ of class $W^{2, p}_{loc}$ such that $g^*(A, u)$ is smooth.
\end{prop}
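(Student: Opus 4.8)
The plan is to prove this as a local statement and then patch, since both the hypothesis $(A,u)\in W^{1,p}_{loc}$ and the conclusion are local in nature. It suffices to fix a point $z_0\in{\mb C}$, work on a small disk $D=D_r(z_0)$, and produce $g\in W^{2,p}(D)$ with $g^*(A,u)$ smooth on $D$. Throughout I assume $p>2$, so that on a two-dimensional domain $W^{1,p}$ embeds into $C^0$ and is a Banach algebra, and the composition of a $W^{k,p}$ map with a smooth map again lies in $W^{k,p}$; these are the facts that make the nonlinear terms below behave.

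First I would fix a gauge. The vortex equations are gauge-invariant, hence not elliptic as written, so the first step is to put the connection into local Coulomb gauge. Writing $A=d+\alpha$ with $\alpha\in W^{1,p}(D)$, I note that $\|\alpha\|_{W^{1,p}(D_r)}\to 0$ as $r\to 0$ by absolute continuity of the $L^p$-integrals of $\alpha$ and $\nabla\alpha$, so for $r$ small enough the local slice (Uhlenbeck) gauge-fixing theorem applies and yields $g\in W^{2,p}(D)$ with $d^*(g^*\alpha)=0$. After renaming I may assume $\alpha$ itself satisfies $d^*\alpha=0$.

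Now the pair satisfies an elliptic system. In the coordinate form (\ref{24}), with $\alpha=\Phi\,ds+\Psi\,dt$, the Coulomb condition reads $\partial_s\Phi+\partial_t\Psi=0$, while the second vortex equation gives $\partial_s\Psi-\partial_t\Phi=-[\Phi,\Psi]-\mu(u)$. Together these form a square first-order Cauchy--Riemann--type elliptic system $(d\oplus d^*)\alpha$ whose right-hand side is quadratic in $\alpha$ plus $\mu(u)$. Simultaneously, the first vortex equation $\partial_s u+J(u)\partial_t u={\mc X}_\Phi(u)+J(u){\mc X}_\Psi(u)$ is an inhomogeneous $J$-holomorphic curve equation for $u$, with right-hand side built from $\alpha$ and $u$. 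I would then bootstrap: starting from $(\alpha,u)\in W^{1,p}$, the algebra and composition properties show both right-hand sides lie in $W^{1,p}$, so first-order elliptic regularity for $d\oplus d^*$ and for $\ov\partial_J$ upgrades $(\alpha,u)$ to $W^{2,p}$; feeding this back shows $(\alpha,u)\in W^{3,p}$, and by induction $(\alpha,u)\in W^{k,p}$ for every $k$, hence smooth on $D$.

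Finally I would globalize. Covering ${\mb C}$ by such small disks $D_i$ with local smoothing gauges $g_i$, on each overlap $D_i\cap D_j$ the transition $h_{ij}=g_j g_i^{-1}$ intertwines two \emph{smooth} connections, so it solves a first-order elliptic equation with smooth coefficients and is therefore itself smooth by the same bootstrap. Thus the local smooth representatives agree up to smooth gauge on overlaps, and the $g_i$ can be glued into a single $g\in W^{2,p}_{loc}({\mb C})$ with $g^*(A,u)$ smooth, using that every $G$-bundle over ${\mb C}$ is trivial. I expect the genuine analytic obstacle to be the gauge-fixing step---verifying the smallness hypothesis and invoking the nonlinear Uhlenbeck theorem to obtain a gauge transformation only one derivative more regular than $\alpha$---whereas the subsequent bootstrap and the gluing, though somewhat tedious, are routine.
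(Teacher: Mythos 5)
This proposition is quoted in the paper from Ziltener's thesis (Proposition D.2) and the paper itself supplies no proof, so there is nothing internal to compare against; your proposal has to be judged on its own. What you wrote is the standard regularity argument for gauge-theoretic equations, and it is essentially the argument behind the cited result: local Coulomb (Uhlenbeck) gauge, then an alternating elliptic bootstrap for the system $d^*\alpha=0$, $d\alpha=-\tfrac12[\alpha\wedge\alpha]-\mu(u)\,ds\wedge dt$, $\ov\partial_{J,A}u=0$, then smoothness of transition maps and patching over the contractible base ${\mb C}$. The structure is sound. One step deserves more care than you gave it: the smallness hypothesis for the local slice theorem. Absolute continuity gives $\|\alpha\|_{W^{1,p}(D_r)}\to 0$, but the slice theorem is stated on a fixed (unit) disk, so you must rescale $D_r$ to $D_1$; the point that saves you is that for $p>2$ the rescaled connection form satisfies $\|\tilde\alpha\|_{L^p(D_1)}=r^{1-2/p}\|\alpha\|_{L^p(D_r)}$ and $\|\nabla\tilde\alpha\|_{L^p(D_1)}=r^{2-2/p}\|\nabla\alpha\|_{L^p(D_r)}$, with positive exponents of $r$, so smallness does hold on the rescaled disk. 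With that point made explicit, and with the usual care about left/right conventions when you glue $g_i$ against the smooth trivialization of the bundle defined by the smooth cocycle $h_{ij}$, your proof is complete and is the same route as the cited source rather than a genuinely different one.
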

From now on any solution will be assumed to be smooth unless otherwise mentioned.

The next important property for affine vortices is its behavior near infinity. First we look at the decay of energy density.
\begin{prop}\cite[Corollary 4]{Ziltener_Deday} Suppose $(A, u)$ is an affine vortex with finite energy. Then for any $\epsilon>0$ there exists $C_\epsilon>0$ such that
\begin{align}
e_{A, u}(z) \leq C_\epsilon |z|^{-4 + \epsilon},\ \forall |z|\geq 1.
\end{align}
\end{prop}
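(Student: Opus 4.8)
The plan is to combine a pointwise \emph{mean value inequality} for the energy density with a \emph{decay estimate for the tail energy} $E(r) := \int_{\{|w| \geq r\}} e_{A,u}$, and then to match the two exponents by a scaling argument. By the finite energy hypothesis $E(r)$ is finite and $E(r) \to 0$ as $r \to \infty$. The target exponent $-4+\epsilon$ factors as $-2$, coming from the scaling weight in the mean value inequality, plus $-2+\epsilon$, coming from the tail decay.

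First I would establish the mean value inequality. Applying the Weitzenb\"ock--Bochner technique to a solution of \eqref{vortexeqn} yields a differential inequality $\Delta e_{A,u} \geq -C\big(e_{A,u} + e_{A,u}^2\big)$, where the constant depends only on the geometry of $(M,\omega,J)$ in a fixed compact region together with the flat geometry of $\mathbb{C}$. The convexity at infinity in the Hypothesis enters here: it forces the proper function $f\circ u$ to stay bounded along a finite energy solution, so the solution remains in a compact part of $M$ and $C$ is uniform. A Heinz-type argument then produces a threshold $\hbar>0$ and a constant $C_0$ so that $\int_{B_{2r}(z)} e_{A,u} < \hbar$ implies $e_{A,u}(z) \leq \tfrac{C_0}{r^2}\int_{B_{2r}(z)} e_{A,u}$. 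Since $E(r)\to 0$, for $|z|$ large the ball $B_{|z|/4}(z)$ lies in $\{|w|\geq |z|/2\}$ and carries energy below $\hbar$, so with $r = |z|/8$ we obtain $e_{A,u}(z) \leq \tfrac{C_1}{|z|^2}\, E(|z|/2)$.

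The substantive step is the tail estimate $E(r) \leq C_\epsilon\, r^{-2+\epsilon}$. Passing to cylindrical coordinates $z = e^{s+it}$ with $(s,t)\in[0,\infty)\times S^1$, the exterior region becomes a half-cylinder. Finite energy and asphericity together force the vortex to converge as $s\to\infty$ to a point of the symplectic quotient $\mu^{-1}(0)/G$, with no energy escaping into a holomorphic sphere at infinity; after gauge fixing the solution is asymptotic to a holomorphic map into $\mu^{-1}(0)/G$ with a removable singularity at $s=\infty$. The leading perturbation sits in the first nonzero Fourier mode on $S^1$, so the fields decay like $e^{-s}$ and the quadratic energy density like $e^{-2s}$. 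Making this quantitative, one derives a differential inequality $\tilde E'(s) \leq -(2-\epsilon)\,\tilde E(s)$ for the cylinder tail energy $\tilde E(s) := \int_{[s,\infty)\times S^1} \hat e$, where $\hat e$ is the energy density in cylindrical coordinates; integrating by Gr\"onwall gives $\tilde E(s) \leq C_\epsilon\, e^{-(2-\epsilon)s}$. Since the conformally invariant part $|d_A u|^2$ dominates, while the curvature and moment-map terms are controlled (indeed they decay at least as fast), translating back through $s = \log|z|$ yields $E(r) \leq C_\epsilon\, r^{-2+\epsilon}$.

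Combining the two estimates gives $e_{A,u}(z) \leq \tfrac{C_1}{|z|^2}\,E(|z|/2) \leq C_\epsilon\,|z|^{-4+\epsilon}$ for all $|z|\geq 1$, as claimed. I expect the tail estimate to be the main obstacle: one must prove convergence of the vortex to a single asymptotic limit, identify the spectral gap of the linearized operator on the circle, and convert it into the quantitative inequality for $\tilde E$ while tracking the coupling between the gauge field, the moment map, and the holomorphic part. The $\epsilon$-loss is intrinsic to this bootstrap scheme, which iteratively improves a crude initial decay bound but cannot reach the endpoint exponent, reflecting that the endpoint rate is governed by, and only approached through, the first circle eigenvalue.
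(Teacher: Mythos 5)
The paper gives no proof of this proposition at all: it is imported verbatim from \cite[Corollary 4]{Ziltener_Deday}, so your attempt can only be compared against that reference. Your two-step scheme --- a Heinz-type mean value inequality $e_{A,u}(z) \leq C_0 r^{-2} \int_{B_{2r}(z)} e_{A,u}$ below an energy threshold, combined with a tail estimate $E(r) \leq C_\epsilon r^{-2+\epsilon}$ obtained on the half-cylinder from a differential inequality $\tilde E'(s) \leq -(2-\epsilon)\tilde E(s)$, with the exponent $-4+\epsilon$ split as $-2$ (scaling) plus $-2+\epsilon$ (tail decay) --- is exactly the architecture of Ziltener's proof, and your combination step (applying the mean value inequality on $B_{|z|/4}(z) \subset \{|w| \geq |z|/2\}$ once the tail energy drops below the threshold) is carried out correctly.

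The caveat is that the step you yourself flag as the main obstacle is not a technical remainder but the entire content of the cited paper. There the differential inequality for $\tilde E$ is not derived from a Fourier-mode or spectral-gap heuristic applied to an asymptotic limit; it comes from two quantitative ingredients: an energy--action identity expressing the tail energy of a vortex on the half-cylinder in terms of Ziltener's \emph{invariant symplectic action} of its boundary loop, and an isoperimetric inequality for that action with a constant close enough to the sharp one that every exponent $2-\epsilon$ is attained. Your sketch names the spectral gap but supplies neither ingredient, and the coupling you mention between the gauge field, the moment map and the holomorphic part is precisely what the invariant action is designed to absorb --- a naive reduction to ``the first circle eigenvalue'' of a linearized operator does not by itself produce an inequality valid at every $s$, only an asymptotic rate once convergence to a single limit is already known. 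Two smaller points: asphericity of $M$ plays no role in the asymptotic behavior (finite energy plus properness of $\mu$, freeness on $\mu^{-1}(0)$ and convexity at infinity are what is used); and the sentence ``the conformally invariant part $|d_Au|^2$ dominates'' should be replaced by the energy--action identity, since the three terms of $e_{A,u}$ carry different conformal weights and on the cylinder one is dealing with a vortex equation with area form $e^{2s}\,ds\,dt$, not a conformally invariant problem. None of this invalidates your plan --- it is the right plan --- but what you have deferred is the proof rather than a preliminary to it.
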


Then we have the assymptotic behavior of vortices in suitable gauge. A solution $(A, u)$ is {\bf in radial gauge} if for $|z|$ large, $A =  d + \eta(z) d\theta$.
\begin{prop}\cite[Proposition 11.1]{Gaio_Salamon_2005} If $(A, u)$ is an affine vortex with finite energy in radial gauge with $A= d + \xi(z) d\theta$ for $|z|$ large. Then there exists a $W^{1, 2}$-map $x: S^1 \to \mu^{-1}(0)$ and an $L^2$-map $\eta: S^1 \to {\mf g}$ such that $x'(\theta) + {\mc X}_{\eta(\theta)}(x(\theta)) = 0$ and 
\begin{align}
\lim_{ r \to \infty} \sup_{\theta} d \left( u( re^{i\theta}), x(\theta) \right) = 0,\ \lim_{r\to \infty} \int_0^{2\pi} \left| \xi  ( r e^{i \theta}) - \eta(\theta) \right|^2 d \theta = 0.
\end{align}
\end{prop}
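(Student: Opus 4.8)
The plan is to extract everything from the energy-density decay of the previous proposition together with the two vortex equations written in polar coordinates $z = re^{i\theta}$. In the radial gauge $A = d + \xi(z)\,d\theta$ the radial component of the connection vanishes, so the covariant radial derivative is $D_r u = \partial_r u$, the covariant angular derivative is $D_\theta u = \partial_\theta u + \mc{X}_\xi(u)$, and the curvature is $F_A = \partial_r\xi\, dr\wedge d\theta$. Since $e_{A,u} \geq \tfrac12\bigl(|D_r u|^2 + r^{-2}|D_\theta u|^2 + |\mu(u)|^2\bigr)$, the bound $e_{A,u}\leq C_\epsilon|z|^{-4+\epsilon}$ yields the pointwise estimates $|D_r u|,\ |\mu(u)|,\ |\Lambda F_A| \lesssim r^{-2+\epsilon/2}$, and $|D_\theta u|\lesssim r^{-1+\epsilon/2}$; the last uses the first vortex equation, which in polar form reads $D_r u + r^{-1}J\,D_\theta u = 0$, whence $|D_\theta u| = r|D_r u|$.

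First I would establish uniform convergence of the angular loops. Writing $u_r(\theta) := u(re^{i\theta})$, the identity $D_r u = \partial_r u$ in the radial gauge gives, for $r_1 < r_2$,
\begin{align}
\sup_\theta d\bigl(u_{r_2}(\theta), u_{r_1}(\theta)\bigr) \leq \sup_\theta \int_{r_1}^{r_2} |\partial_r u(\rho e^{i\theta})|\,d\rho \leq C\int_{r_1}^{r_2}\rho^{-2+\epsilon/2}\,d\rho \leq C\, r_1^{-1+\epsilon/2}.
\end{align}
Hence $u_r$ is uniformly Cauchy as $r\to\infty$ and converges uniformly to a continuous loop $x\colon S^1\to M$. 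Because $|\mu(u)|\to 0$ uniformly and $\mu$ is proper with $0$ a regular value, the image of $x$ lies in the compact set $\mu^{-1}(0)$; convexity at infinity guarantees that $u$ does not escape to infinity, so this limit is meaningful.

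Next I would identify the limiting loop. From $|D_\theta u|\lesssim r^{-1+\epsilon/2}$ we get $D_\theta u\to 0$ uniformly and in $L^2(S^1)$, so in $\partial_\theta u_r = D_\theta u - \mc{X}_{\xi_r}(u_r)$ the term $\mc{X}_{\xi_r}(u_r)$ must carry all the angular variation. Since the $G$-action is free along $\mu^{-1}(0)$, the infinitesimal action $\mc{X}_{\bullet}(x(\theta))\colon\mf{g}\to T_{x(\theta)}M$ is injective and admits a smooth left inverse; this lets me define $\eta(\theta)$ by the relation $x'(\theta) + \mc{X}_{\eta(\theta)}(x(\theta)) = 0$, once $x'$ is known to lie in $L^2$. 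To obtain $x\in W^{1,2}$ and a well-defined $\eta\in L^2$, I would show that $\xi_r$ is bounded in $L^2(S^1)$ uniformly in $r$; then $\partial_\theta u_r = D_\theta u - \mc{X}_{\xi_r}(u_r)$ is bounded in $L^2$, a subsequence converges weakly, and the uniform convergence $u_r\to x$ identifies the weak limit with $x'$. Freeness of the action makes the limit $\eta$ unique, upgrading subsequential weak convergence to full weak convergence $\xi_r\rightharpoonup\eta$ and delivering the asymptotic equation.

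The main obstacle is precisely the control of the connection component $\xi$, which is the only quantity not dominated by the energy density directly. The second vortex equation gives $\partial_r\xi = -r\,\mu(u)$ with $|\mu(u)|\lesssim r^{-2+\epsilon/2}$, so $|\partial_r\xi|\lesssim r^{-1+\epsilon/2}$ is \emph{not} integrable in $r$ at the stated decay rate; one therefore cannot integrate radially to get pointwise convergence of $\xi$, and must settle for the weaker $L^2(S^1)$ statement. I expect the crux to be twofold: (i) obtaining the uniform $L^2$-bound on $\xi_r$ (equivalently, ruling out that the angular energy $\int_0^{2\pi}|\partial_\theta u_r|^2\,d\theta$ diverges), which I would attack by a rescaling argument---passing to the annuli $\{r\le|z|\le 2r\}$, rescaling to a fixed annulus on which the total energy tends to zero, and applying gauge fixing together with elliptic estimates for the vortex operator; and (ii) upgrading the weak convergence $\xi_r\rightharpoonup\eta$ to strong $L^2$-convergence, which requires convergence of the $L^2$-norms and hence a no-loss-of-angular-energy statement in the limit. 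Both points use the \emph{finiteness} of the total energy, not merely its pointwise decay rate.
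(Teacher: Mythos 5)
The proposition is quoted in the paper from Gaio--Salamon (their Proposition 11.1) without proof, so the relevant comparison is with the standard argument there. Your first half is correct and matches it: in radial gauge $|\partial_r u| = |D_r u| \leq C_\epsilon r^{-2+\epsilon/2}$ is integrable, so the loops $u_r$ are uniformly Cauchy and converge to a continuous loop $x$ with values in $\mu^{-1}(0)$, and you have correctly located the entire difficulty in the connection component $\xi$. But the proposal then stops exactly at that difficulty: your steps (i) and (ii) --- the uniform $L^2(S^1)$ bound on $\xi_r$ and the strong convergence $\xi_r \to \eta$ --- \emph{are} the proposition, and the attack you sketch for them would not close. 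Linear elliptic estimates after gauge fixing on the rescaled dyadic annuli control the solution there up to errors of order $\sqrt{E_k}$, where $E_k$ is the energy of the $k$-th annulus; finiteness of the total energy gives $\sum_k E_k < \infty$ but not $\sum_k \sqrt{E_k} < \infty$, so the annulus-by-annulus errors need not sum and one cannot conclude that the loops $\xi_{2^k}$ are Cauchy in $L^2(S^1)$. Put invariantly: since $F_A = \partial_r \xi\, dr \wedge d\theta$, Stokes' theorem gives $\int_0^{2\pi} \xi(re^{i\theta})\, d\theta = \int_{B_r} F_A = -\int_{B_r} \mu(u)\, ds\, dt$, so even convergence of the \emph{mean} of $\xi_r$ is equivalent to $\mu(u) \in L^1({\mb C})$, and the only bound you have, $|\mu(u)| \leq \sqrt{2 e_{A,u}} \leq C_\epsilon |z|^{-2+\epsilon/2}$, just fails to deliver this --- as you yourself observe. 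No amount of weak-compactness or ``no loss of angular energy'' bookkeeping can substitute for this missing integrability.

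The missing idea, which is how Gaio--Salamon actually proceed (and Jaffe--Taubes in the case $N=1$), is an \emph{improved decay estimate for the moment map alone}: the moment map decays like the energy density itself, not like its square root. Writing $L_u: {\mf g} \to T_uM$, $\xi \mapsto {\mc X}_\xi(u)$ for the infinitesimal action, the two vortex equations imply a differential inequality of the shape
\begin{align*}
\Delta \tfrac12 |\mu(u)|^2 \ \geq\ \langle \mu(u), L_u^* L_u\, \mu(u) \rangle - C\, |d_A u|^2\, |\mu(u)| \ \geq\ \delta\, |\mu(u)|^2 - C\, e_{A,u}\, |\mu(u)|
\end{align*}
on $\{|z| \geq R\}$, where the positivity $L_u^* L_u \geq \delta > 0$ holds precisely because the action is free near $\mu^{-1}(0)$ (for $N=1$ this is the classical identity $\Delta f = |u|^2 f - |d_A u|^2$ for $f = \tfrac12(1-|u|^2)$). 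A comparison/maximum-principle argument on exterior domains then yields $|\mu(u)| \leq C|z|^{-2-\delta}$ for some $\delta > 0$, hence $|\partial_r \xi| = r|\mu(u)| \leq C r^{-1-\delta}$ is integrable in $r$. Integrating radially, $\xi_r$ converges \emph{uniformly} (in particular in $L^2(S^1)$) to a limit $\eta$, and then your own observations finish the proof: $D_\theta u \to 0$ uniformly and $\partial_\theta u_r = D_\theta u_r - {\mc X}_{\xi_r}(u_r)$ give $x' + {\mc X}_\eta(x) = 0$ and $x \in W^{1,2}$. So your diagnosis of the obstacle is right, but the mechanism that removes it is this second-order decay estimate for $|\mu(u)|$ exploiting freeness of the action --- a genuinely missing ingredient in the proposal, not a technical step that gauge fixing and elliptic theory on small-energy annuli would supply.
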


To state the next proposition we introduce some notations. Consider the standard embedding ${\mb C}\to S^2$. An {\bf extension} of $(A, u)$ is a triple $(\wt{P}, \iota, \wt{u})$, where $\wt{P}\to S^2$ is a topological $G$-bundle, $\iota: {\mb C}\times G \to \wt{P}$ is a bundle map which descends to the inclusion ${\mb C}\to S^2$, and $\wt{u}: \wt{P}\to M$ is an equivariant continuous map such that $\wt{u}\circ \iota = u$.

\begin{prop}\cite{Gaio_Salamon_2005}\cite{quantumkirwan}\label{prop39}
If $\mc{YMH}(A, u) < \infty$, then there exists a unique extension $(\wt{P}, \iota, \wt{u})$ of $(A, u)$ onto the sphere, such that $\wt{u}(\wt{P}_\infty) \subset \mu^{-1}(0)$, where $\wt{P}_\infty$ is the fibre of $\wt{P}$ at $\infty \in S^2$.
\end{prop}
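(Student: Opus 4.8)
The strategy is to convert the finite-energy hypothesis into control of $(A,u)$ near the puncture $\infty \in S^2$ and then glue in trivial data across it. Passing to the coordinate $w = z^{-1}$ near $\infty$, I would first place the vortex in radial gauge on a neighborhood of infinity, as supplied by the radial-gauge asymptotics recalled above, so that $A = d + \xi(z)\,d\theta$ for $|z|$ large and there is a limit loop $x \colon S^1 \to \mu^{-1}(0)$ of class $W^{1,2}$ together with $\eta \in L^2(S^1,\mf{g})$ satisfying
\begin{align}
x'(\theta) + \mc{X}_{\eta(\theta)}(x(\theta)) = 0,\qquad \lim_{r\to\infty}\sup_\theta d\!\left(u(re^{i\theta}),x(\theta)\right)=0.
\end{align}
Since $W^{1,2}(S^1)\hookrightarrow C^0(S^1)$, the loop $x$ is continuous with image in the compact set $\mu^{-1}(0)$, and all the asymptotic data live at a regularity adequate for building a topological bundle with a continuous section.

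The equation for $x$ says that its velocity is everywhere tangent to the $G$-orbits, so its projection to the symplectic quotient $\mu^{-1}(0)/G$ is constant; hence $x$ takes values in a single orbit $G\cdot x_0$. Because the $G$-action on $\mu^{-1}(0)$ is free by our standing hypothesis, I can integrate $\eta$ to a continuous loop $g \colon S^1 \to G$ with $g(\theta)\cdot x_0 = x(\theta)$; this $g$ is the asymptotic holonomy, and its free-homotopy class in $\pi_1(G)$ records the topological type of the bundle to be built. I would then define $\wt{P}\to S^2$ by gluing the trivial bundle $\mb{C}\times G$ over $\{|w|>0\}$ to a trivial bundle over a disk $\{|w|<\epsilon\}$ centered at $\infty$, using $g$ as the clutching function on the overlap annulus; the inclusion of the $\mb{C}$-factor gives $\iota$. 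In the trivialization over the disk at $\infty$ the section $u$ becomes $g(\theta)^{-1}u(re^{i\theta})$, which by the uniform convergence above tends to $x_0$ as $w\to 0$, so I define $\wt{u}$ over $\wt{P}_\infty$ to be the equivariant extension of this limit, giving $\wt{u}(\wt{P}_\infty) = G\cdot x_0 \subset \mu^{-1}(0)$.

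Continuity of $\wt{u}$ across $\wt{P}_\infty$ then follows from the uniform convergence together with the continuity of $x$ and $g$; the energy density decay estimate recalled above guarantees that the asymptotic limits are attained and that no further bubbling at $\infty$ obstructs the extension. For uniqueness, any two extensions restrict to the same $(A,u)$ over $\mb{C}$, so they agree on a dense open set, and continuity forces their values on $\wt{P}_\infty$ to coincide; the constraint $\wt{u}(\wt{P}_\infty)\subset \mu^{-1}(0)$ together with freeness of the action pins down both the clutching class and the fibre value, yielding a unique topological $G$-bundle carrying a unique continuous equivariant extension.

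The main obstacle I expect is the low regularity of the asymptotic data: the limit loop $x$ and the gauge direction $\eta$ are only $W^{1,2}$ and $L^2$ respectively, so producing the holonomy loop $g$ as a genuine continuous map into $G$ and verifying that the clutching construction yields a bundle independent of the choices (the radial gauge, the basepoint $x_0$, and the integration of $\eta$) requires care. One must check that admissible changes of these choices alter $g$ only within its homotopy class, so that the topological type of $\wt{P}$ — ultimately the class in $\pi_1(G)$, which for $G=U(1)$ is the vortex number — is well defined, and that the extended section stays continuous even though $u$ approaches its limit only uniformly rather than in a stronger norm.
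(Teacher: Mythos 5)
The paper itself offers no proof of this proposition: it is quoted from \cite{Gaio_Salamon_2005} and \cite{quantumkirwan}, with the two propositions recalled just before it (the $|z|^{-4+\epsilon}$ energy decay and the radial-gauge asymptotics) serving exactly as the input your argument uses. Your proposal is correct and is essentially the standard argument from those cited sources: the radial-gauge asymptotics give a continuous limit loop $x$ in $\mu^{-1}(0)$ lying in a single orbit, integrating $\eta$ gives a path in $G$ that freeness of the action closes up into a holonomy loop $g$, clutching with $g$ builds $\wt{P}$, the uniform convergence $d(u(re^{i\theta}),x(\theta))\to 0$ gives continuity of $\wt{u}$ across $\wt{P}_\infty$, and compactness of $G$ together with freeness of the action near $\mu^{-1}(0)$ forces any two such extensions to be identified, which is the uniqueness.
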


The extension $(\wt{P}, \iota, \wt{u})$ defines an equivariant homology class $\wt{u}_*[S^2] \in H_2^G(M; {\mb Z})$. Indeed, the Yang-Mills-Higgs functional of $(A, u)$ is equal to the paring $\langle [\omega- \mu], \wt{u}_*[S^2]\rangle$, where $[\omega-\mu]$ is the equivariant cohomology class defined by the equivariantly closed two form $\omega -\mu$.

Identify $u$ topologically with $v: {\mb D} \to M$ which extends continuously to the boundary. Then the extension of $(A, u)$ to $S^2$ is given by $g_\infty: S^1 \to G$ and $x_\infty \in \mu^{-1}(0)$ such that $v|_{\partial {\mb D}}( e^{i \theta})  = g_\infty(e^{i \theta}) x_\infty$. Trivialize $v^* TM$ on ${\mb D}$, then the loop $g_\infty$ is identifed with a loop of invertible matrices. The degree of $\det g_\infty$ is defined to be the {\bf Maslov index} of the solution $(A, u)$, denoted by ${\rm ind}_\mu(A, u)$ (see \cite[Definition 2.6]{quantumkirwan} for details).

\subsection{Fredholm theory of affine vortices}\label{subsection31}

Let $z$ be the standard coordinate on ${\mb C}$. Let $\rho(z) = ( 1+ |z|^2)^{1\over 2}$ for $z\in {\mb C}$. For $\delta \in {\mb R}$, $p>1$, consider the Banach spaces (over complex numbers)
\begin{align}
W^{k, p}_{\delta, \mf{euc}}:= \left\{ u \in W^{k, p}_{loc}({\mb C})\ |\  \rho^\delta u \in W^{k, p}({\mb C})\right\}.
\end{align}
And we denote $L^p_\delta= W^{0, p}_{\delta, \mf{euc}}$. 

For a smooth solution $(A, u)$, the space of infinitesimal deformations is described as follows. Regard $u$ as a smooth map from ${\mb C}$ to $M$. The connection $A$ induces a connection on the bundle $u^* TM $ and the bundle $T^*{\mb C}\otimes {\mf g}$, both denoted by $\nabla^A$. For $(V, \alpha)\in W^{1, p}_{loc} \left( {\mb C}, u^*TM \oplus T^*{\mb C}\otimes {\mf g} \right)$, define
\begin{align}
\left| (V, \alpha) \right|_{W_{p, \delta}} := \left| V \right|_{L^\infty} + \left| \nabla^A V \right|_{L^p_\delta} + \left| \nabla^A \alpha \right|_{L^p_\delta} + \left| d\mu(V) \right|_{L^p_\delta} + \left| d\mu( J V) \right|_{L^p_\delta} + \left| \alpha \right|_{L^p_\delta}.
\end{align}
And define $W_{p, \delta}\subset W^{1, p}_{loc}$ be the subspace of vectors with finite $W_{p, \delta}$-norm. If $A= d + \alpha_0$, then the linearization of the gauge tranformation is $h \mapsto  \left( - dh + [\alpha_0, h], -{\mc X}_h \right)$, whose adjoint is the map
\begin{align}
\begin{array}{ccc}
W_{p, \delta} & \to & L_\delta^p \left( {\mb C}, {\mf g} \right)\\
 \left( V, \alpha \right) & \mapsto & - d^* \alpha - \Lambda [ * \alpha_0, \alpha] - d\mu ( J V)
\end{array}
\end{align}
Then it is easy to see, the linearization at such $(A, u)$ is a bounded linear operator
\begin{align}\label{eqn39}
\begin{array}{cccc}
D_{A, u} : & W_{p, \delta} & \to & L^p_\delta\\
           & \left( V, \alpha \right) & \mapsto & \left( \begin{array}{c} \left( \nabla^A V \right)^{0, 1} + {1\over 2} (\nabla_V J ) \circ d_A u \circ j + {\mc X}_{\alpha}^{0, 1} \\
           d \alpha + [ \alpha_0 , \alpha] + d\mu(V) \cdot ds dt\\
           - d^* \alpha - \Lambda [* \alpha_0, \alpha] - d\mu( JV)
           \end{array} \right)
           \end{array}
\end{align}

\begin{prop}\cite{quantumkirwan}
There exists $p_0>2$ such that for all $p\in (2, p_0)$ and $\delta \in \left( 1-{2\over p}, 2- {2\over p} \right)$, $D_{A, u}$ is Fredholm and ${\rm index} D_{A, u}= 2 {\rm ind}_\mu(A, u) + {\rm dim} M - 2 {\rm dim}G$.
\end{prop}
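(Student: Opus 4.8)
The plan is to treat $D_{A,u}$ as an elliptic operator on the noncompact surface $\mathbb{C}$ and run the standard weighted Fredholm machinery on a cylindrical end, then compute the index by homotopy invariance and Riemann--Roch on the compactification. First I would verify ellipticity. The top-order part of $D_{A,u}$ decouples: on the $V$-component it is the Cauchy--Riemann operator $(\nabla^A V)^{0,1}$, while on the $\alpha$-component the curvature linearization $d\alpha$ together with the gauge-fixing term $-d^*\alpha$ assemble into the operator $\alpha \mapsto (d\alpha, d^*\alpha)$. Both are elliptic, and since the real ranks of source and target agree (each equals $\dim M + 2\dim G$), the augmented operator $D_{A,u}$ is a square elliptic system. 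The point of the third row, the adjoint gauge-fixing equation, is exactly to complete the underdetermined linearized vortex operator to an elliptic one; this is the usual device of rolling the Coulomb gauge condition into the deformation operator.

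Second, I would identify the model operator at infinity. Writing $\mathbb{C}\setminus\{0\}\cong \mathbb{R}\times S^1$ via $z=e^{s+i\theta}$, one has $\rho^\delta\sim e^{\delta s}$, so $L^p_\delta$ becomes an exponentially weighted $L^p$ space on the cylinder. By the energy-density decay estimate and the radial-gauge asymptotics recalled above, after a gauge transformation $(A,u)$ converges at an exponential rate to an asymptotic configuration governed by a loop $x(\theta)\in\mu^{-1}(0)$ and $\eta(\theta)\in\mathfrak{g}$ satisfying $x'(\theta)+\mathcal{X}_{\eta(\theta)}(x(\theta))=0$. Consequently $D_{A,u}$ is asymptotic to a translation-invariant operator $\partial_s+L(\theta)$ for a first-order elliptic operator $L$ on $S^1$. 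By the Lockhart--McOwen theory, $D_{A,u}$ is then Fredholm on $e^{\delta s}L^p$ precisely when $\delta$ avoids the critical weights, i.e.\ the set of $\delta$ for which the model operator admits nonzero solutions of the form $e^{\delta s}w(\theta)$; equivalently, $\delta$ is not the real part of an eigenvalue of $L$. I would compute this spectrum from the linearized asymptotic dynamics and the curvature part and check that no critical weight lies in $(1-\tfrac2p,\,2-\tfrac2p)$. The shift by $2/p$ is the standard discrepancy between the $L^p$ and $L^2$ weightings in real dimension two; the endpoints correspond to the consecutive critical weights associated with the constant and first harmonic modes, and the existence of $p_0>2$ with the interval nonempty and admissible follows by continuity once $p$ is close enough to $2$.

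Third, I would compute the index by homotopy invariance. Since the Fredholm index is locally constant, one may deform $D_{A,u}$ through Fredholm operators to its complex-linear part, dropping the lower-order terms $\tfrac12(\nabla_V J)\circ d_A u\circ j$ and $\mathcal{X}_\alpha^{0,1}$; this does not change the index. The resulting operator splits into a $\bar\partial$-operator on $u^*TM$ with a prescribed asymptotic condition at infinity and the gauge--curvature block. Compactifying to $S^2$ using the extension of Proposition \ref{prop39}, the gauge--curvature block becomes $\alpha\mapsto(d\alpha,d^*\alpha)$ from $\Omega^1(S^2;\mathfrak{g})$ into $\Omega^2(\mathfrak{g})\oplus\Omega^0(\mathfrak{g})$, whose kernel is the harmonic $1$-forms and whose cokernel is $\mathcal{H}^2\oplus\mathcal{H}^0$, giving index $(b_1-b_0-b_2)\dim G=-2\dim G$. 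The $\bar\partial$-operator on the complex bundle $u^*TM$ of complex rank $\tfrac12\dim M$ has, by Riemann--Roch with the asymptotic boundary data entering as a winding term, complex index $\tfrac12\dim M+\mathrm{ind}_\mu(A,u)$, hence real index $\dim M+2\,\mathrm{ind}_\mu(A,u)$. Summing the two blocks yields $2\,\mathrm{ind}_\mu(A,u)+\dim M-2\dim G$, as claimed.

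The main obstacle is the Fredholm step: pinning down the critical weights of the asymptotic operator $L(\theta)$ built from the limiting loop in $\mu^{-1}(0)$ and verifying that the interval $(1-\tfrac2p,\,2-\tfrac2p)$ contains none of them. This requires careful bookkeeping relating the geometric weight $\delta$, the $L^p$-shift $2/p$, and the indicial roots of the cylindrical Cauchy--Riemann operator, as well as confirming that convexity at infinity keeps the solution in a region where the model analysis is valid. A secondary subtlety is matching the Maslov index $\mathrm{ind}_\mu(A,u)$, defined via the degree of $\det g_\infty$, to the boundary-winding term in the Riemann--Roch count after the homotopy to the complex-linear model.
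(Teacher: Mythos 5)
The paper itself gives no proof of this proposition (it is quoted from Ziltener's work \cite{quantumkirwan}), so your attempt has to be measured against the analysis that is actually required there, and it has a genuine gap concentrated in one place: the zeroth-order coupling terms $\mathcal{X}_\alpha^{0,1}(u)$ and $d\mu(V)$ do \emph{not} decay at infinity. Since $G$ acts freely on $\mu^{-1}(0)$ and $u$ converges to $\mu^{-1}(0)$, the map $\alpha\mapsto\mathcal{X}_\alpha(u)$ is uniformly bounded below near infinity, and these terms dominate the asymptotic behavior of $D_{A,u}$. This invalidates your second step: in the coordinates $z=e^{s+i\theta}$ the Euclidean area form is $e^{2s}\,ds\,d\theta$, so these coupling terms enter the cylindrical picture with exponentially \emph{growing} coefficients, and $D_{A,u}$ is not asymptotic to any translation-invariant operator $\partial_s+L(\theta)$; there is no Lockhart--McOwen spectrum of ``indicial roots of the cylindrical Cauchy--Riemann operator'' governing the full system. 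The true structure is anisotropic: in the directions spanned by $\mathcal{X}_\xi(u)$, $J\mathcal{X}_\xi(u)$ and in the $\alpha$-block the operator is massive (essentially invertible at infinity), and only in the $T\overline{M}$-directions is it of cylindrical Cauchy--Riemann type. This is precisely what the nonstandard norm $W_{p,\delta}$ encodes --- $|V|_{L^\infty}$, with weights placed only on $\nabla^A V$, $d\mu(V)$, $d\mu(JV)$ and $\alpha$ --- so that the horizontal part of $V$ may tend to a \emph{nonzero} limit at infinity; these limiting values are what produce the $\dim M-2\dim G=\dim\overline{M}$ in the index. Your identification of the domain with $e^{\delta s}L^p$ on the cylinder replaces $W_{p,\delta}$ by a space of decaying sections, i.e.\ you end up analyzing a different operator on a different space.

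The same oversight breaks your index computation. On a noncompact domain, a non-decaying zeroth-order term is not a compact perturbation, so dropping $\mathcal{X}_\alpha^{0,1}$ and the $d\mu(V)$-coupling is not index-preserving and the interpolating family does not stay Fredholm. (Dropping $\frac{1}{2}(\nabla_V J)\circ d_Au\circ j$ is harmless, since $d_Au$ decays.) Here is a concrete check: after decoupling, the $\alpha$-block is essentially the operator $\partial$ acting $W^{1,p}_\delta\to L^p_\delta$ on $\mathbb{C}$, which for $\delta\in\left(1-\frac{2}{p},\,2-\frac{2}{p}\right)$ has trivial kernel \emph{and} trivial cokernel (a dual element would be an entire function in $L^q_{-\delta}$, and even constants fail to lie there precisely because $\delta<2-\frac{2}{p}$), hence index $0$ --- not the $-2\dim G$ you obtain from harmonic forms on $S^2$. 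So your block-by-block Riemann--Roch count on $S^2$ is not computing the index of any operator connected to $D_{A,u}$ through Fredholm operators; that the total reproduces $2\,\mathrm{ind}_\mu(A,u)+\dim M-2\dim G$ reflects that the compact-surface heuristic gives the expected answer, not that the argument proves it. The correct proof splits the deformation space near infinity into the massive vertical part, handled by a coercivity/invertibility estimate, and the horizontal $T\overline{M}$-part, handled by a weighted Cauchy--Riemann analysis whose critical weights produce the window $\left(1-\frac{2}{p},\,2-\frac{2}{p}\right)$, and then recovers the global index by an excision/gluing argument --- the step your homotopy was meant to replace.
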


\subsection{The adiabatic limit}

For a compact Riemann surface $\Sigma$, fix a smooth area form $\Omega_\Sigma \in \Omega^2(\Sigma)$. Let $\lambda >0$ be a real number. A {\bf $\lambda$-twisted holomorphic map} from $\Sigma$ to $M$ (or a $\lambda$-vortex) is a solution to the vortex equation (\ref{vortexeqn}) with the area form $\Omega_\Sigma$ replaced by $\lambda^2 \Omega_\Sigma$, i.e.,
\begin{align}\label{lambdavortex}
\left\{ \begin{array}{ccc} \ov\partial_A u & = & 0;\\
                           F_A + \lambda^2 \mu(u) \Omega_\Sigma & = & 0. 
\end{array} \right.
\end{align}
Its energy is defined in the same way using the area form $\lambda^2 \Omega_\Sigma$. If we agree that all Sobolev norms appearing in the following are taken with respect to the fixed area form $\Omega_\Sigma$, then the energy of a $\lambda$-twisted holomorphic map is given by 
\begin{align}
\mc{YMH}^\lambda(P, A, u) = {1\over 2} \left(  \left\| d_A u \right\|_{L^2}^2 + \lambda^{-2} \left\| F_A \right\|_{L^2}^2 + \lambda^2 \left\| \mu( u ) \right\|_{L^2}^2 \right).
\end{align}

What is of interest is the limit process $\lambda \to \infty$, which is called the {\bf adiabatic limit process}. If we fix the topological type of the vortex (which implies the uniform bound on the energy), then we see that $\left\| \mu( u ) \right\|_{L^2} \to 0$ as $\lambda \to \infty$. Indeed, $\mu( u )$ will converge to zero except for finitely many points in $\Sigma$, and those points are where nontrivial affine vortices bubble off.

More precisely, suppose $\lambda_k$ is a sequence of real numbers diverging to infinity, and $(A_k ,u_k)$ is a sequence $\lambda_k$-vortices (i.e., solutions to (\ref{lambdavortex})). The energy density function for $(A_k, u_k)$ will be
\begin{align}
e_k(z) = {1\over 2} \left(  \left| d_{A_k} u_k(z) \right|^2 + \lambda_k^2 \left| \mu(u_k (z) ) \right|^2 \right) . 
\end{align}
If the sequence of functions $e_k$ is not uniformly bounded, and suppose there exists a sequence of point $p_k \in \Sigma$ such that
\begin{align}
\limsup_{k \to \infty} c_k := \limsup_{k \to \infty} e_k(p_k) =  \limsup_{k \to \infty} \sup_\Sigma e_k  = + \infty.
\end{align}
Then in \cite{Gaio_Salamon_2005} it was shown that in the following three possibilities (for suitable subsequences) corresponding bubbles will appear:
\begin{enumerate}
\item $\lim_{k \to \infty} \lambda_k^{-2} c_k = +\infty$. In this case, a nontrivial holomorphic sphere in $M$ will bubble off.

\item $0< \lim_{k\to \infty} \lambda_k^{-2} c_k < +\infty$. In this case, a nontrivial planary vortex will bubble off.

\item $\lim_{k \to \infty} \lambda_k^{-2} c_k = 0$. In this case, a nontrivial holomorphic sphere in the symplectic quotient will bubble off.
\end{enumerate}
This is called the {\bf bubbling zoology} of the adiabatic limit. Since we have assumed that $(M, \omega)$ is aspherical, the first bubble type won't appear. In particular we have
\begin{align}\label{eqn314}
\limsup_{k \to \infty} \lambda_k^{-2} \left\| e_k \right\|_{L^\infty} < \infty.
\end{align}

\begin{rem}
In the general situation, given an arbitrary sequence $(A_i, u_i)$, we don't know {\it a priori} where the energy density will blow up and what type of bubbles may appear. But in the case of K\"ahler targets, using the algebraic description of vortices (e.g., stable $N$-pairs) provided by the Hitchin-Kobayashi correspondence, we observe that the cause of the energy concentration is governed in the algebraic side. Using this property we can manipulate the energy concentration for the adiabatic limit process and construct affine vortex bubbles. This is what we do in the last two sections of this paper for the case $X= {\mb C}^N$.
\end{rem}

\section{Degenerations of affine vortices}\label{section4}

In the case studied by Taubes, we have already seen one type of degeneration of affine vortices. Namely, vortices may ``split'': the relative distance between points in $\ora{z}\in {\rm Sym}^d{\mb C}$ may diverge to infinity, which means a sequence of vortices with vortex number $d$ can split into up to $d$ nontrivial vortices. There are two other types of degenerations. If the symplectic quotient $\ov{M}$ allows nontrivial holomorphic spheres, then the energy of a sequence of affine vortices can concentrate at infinity and a holomorphic sphere in $\ov{M}$ can bubble off. Another possibility is sphere bubbles in the interior. But since we have assumed that $M$ is aspherical, this type of bubbles can only appear when marked points coming together, i.e., ghost bubbles.

The bubbling phenomenon has been studied in \cite{quantumkirwan}. And more generally, if we allow arbitrarily many marked points, the we can use the language of {\it complexified multiplihedron} to describe the moduli of stable vortices and its topology, as did in \cite{chris_quantum_kirwan}. Here we only consider the case of at most one interior marked point, hence no need to introduce the formal language.

\subsection{Stable maps modelled on a rooted tree}

A rooted tree $T= (V, E, R)$ is a tree $T = (V, E)$ with a distinguished vertex $R\in V$(the {\it root}). For a rooted tree $T$ the edges are automatically oriented towards the root. We define the {\it depth} $d_T: V \to {\mb Z}_{\geq 0} \cup \{\infty\}$ such that
\begin{enumerate}
\item $d_T(R) = \infty$;

\item $v_1 E v_2 \in E \Longrightarrow d_T(v_1) + 1 = d_T (v_2)$ or $v_2 = R$;

\item If there is no $v_1$ such that $v_1 E v_2 \in E$, then $d_T(v_2) = 0$.
\end{enumerate}

An $n$-labelling of a rooted tree $T$ is a map $\rho: \{\alpha_1, \ldots, \alpha_n\} \to V$.

\begin{defn}
An $(n,1)$-marked genus zero stable map to $\ov{M}$ modelled on an $n$-labelled rooted tree $(T, \rho)= (V, E, R; \rho)$ is a tuple
$$ \left( {\bf u}, {\bf w}\right):= \left( \{ u_{v_i} \}_{v_i \in V}, \{w_{i_1 i_2}\}_{v_{i_1} E v_{i_2} \in E}, \{w_{\alpha_j}\}_{j=1, \ldots, n}         \right)$$
where 
\begin{enumerate}
\item $u_{v_i}: {\mb C} \to \ov{M}$ is a holomorphic map with finite energy (hence extends to a holomorphic sphere);

\item $w_{i_1 i_2} \in {\mb C}$ and $w_{\alpha_j} \in {\mb C}$.
\end{enumerate}
They are subject to the following conditions:
\begin{enumerate}
\item For each $v_{i_1} E v_{i_2} \in E$, $u_{v_{i_1}}(\infty) = u_{v_{i_2}}( w_{i_1 i_2}) \in \ov{M}$;

\item For each $v_{i_2} \in V$, the points $w_{i_1 i_2}$ for all $v_{i_1} E v_{i_2} \in E$ and $z_{\alpha_j}$ for all $\rho(\alpha_j) = v_{i_2}$ are all distinct;

\item If $u_{v_{i_2}}$ is a constant map, then $\#\{ v_{i_1} \in V\ |\ v_{i_1} E v_{i_2} \in E\} + \#\{ \alpha_j \ |\ \rho(\alpha_j) = v_{i_2} \}\geq 2$.
\end{enumerate}
For each $v_i \in V$, we define $Z_{v_i}:= \left\{ w_{v_i' v_i} \in {\mb C}\ |\  v_i' E v_i \in E \right\} \cup \left\{ w_{\alpha_j}\ |\ \rho( \alpha_j) = v_i \right\}$.

\end{defn}
This refined notion of stable maps is used because the domain of an affine vortex has a canonical marked point $\infty$. Then each stable map $\left( {\bf u}, {\bf w} \right)$ defined as above, there is a dinstinguished marked point $\infty$ on the component corresponding to $R$. For each edge $v_1 E v_2$, the coordinate of the node on the component corresponding to $v_1$ is automatically $\infty$. We can define various notions (such as isomorphisms and convergence) of stable maps as did in \cite[Chapter 5]{McDuff_Salamon_2004}, with the restriction that all tree maps should be maps between rooted trees, and we should only use affine linear transformations instead of arbitrary M\"obius transformations.

\subsection{$(0, 1)$-marked and $(1, 1)$-marked stable affine vortices}\label{subsection42}

We now describe the objects we will use to compactify the moduli space of affine vortices discussed in Section \ref{section3}.

\begin{defn}
An {\bf admissible pair of rooted trees} is a pair $\left( \wt{T}, T \right)$ where $\wt{T}= \left( \wt{V}, \wt{E}, R \right)$ is a rooted tree, $T\subset \wt{T}$ is a rooted subtree and we allow $T =\emptyset$ such that the following conditions hold
\begin{enumerate}
\item If $T= \emptyset$, then $\wt{T}$ has a single vertex $R$;

\item If $T= (V, E, R)\neq \emptyset$, then $\wt{V}\setminus V$ consists of vertices of depth zero;
\end{enumerate}
A {\bf labelled admissible pair of rooted trees} is an admissible pair of rooted trees $\left( \wt{T}, T \right)$ together with a vertex $V_0 \in \wt{V}\setminus V$, denoted by $\left( \wt{T}, T; V_0 \right)$.

\bigskip

If $T\neq \emptyset$, then the set of edges $\beta_k E v_i \in \wt{E}$ for $\beta_k \in \wt{V}\setminus V$ and $v_i\in V$ induces a labelling $\rho_{\wt{T}, T}: \wt{V} \setminus V \to V$, making $\left( T, \rho_{\wt{T}, T} \right)$ a rooted tree with a $\# (\wt{V}\setminus V)$-labelling.

\bigskip 

More generally, if $T\neq \emptyset$, and $T' \subset T$ is a rooted subtree (which is allowed to be empty), then denote by ${\mc B}_{\wt{T}, T'}:= \{B_1, \ldots, B_t\}$ the set of connected components of $\wt{T}\setminus T'$. Each element $B_l \in {\mc B}_{\wt{T}, T'}$ represent a rooted tree $\wt{T}_{B_l}$ so that $\left( B_l, B_l \cap T \right)$ is an admissible pair of rooted trees. If $V_0 \in B_l$, then $ \left( B_l, B_l \cap T; V_0 \right)$ is a labelled admissible pair of rooted trees.
\end{defn}

\begin{defn} A $(0, 1)$-marked stable affine vortex modelled on an admissible pair of rooted trees $\left( \wt{T}, T \right)$ is a tuple
\begin{align}
\wt{\bf W}:= \left( \left( {\bf u}, {\bf w} \right); \left\{ W_{\beta_k}\right\}_{\beta_k \in \wt{V}\setminus V} \right)	
\end{align}
where
\begin{enumerate}
\item If $T \neq \emptyset$, then $ \left( {\bf u}, {\bf w}\right)$ is an $(s, 1)$-marked genus zero stable map to $\ov{M}$ modelled on the labelled rooted tree $\left( T, \rho_{\wt{T}, T} \right)$; if $T= \emptyset$ then $\left({\bf u}, {\bf w} \right)$ is empty.

\item {\bf Stability.} For each $\beta_k \in \{ \beta_1, \ldots, \beta_s\}= \wt{V}\setminus V$, $W_{\beta_k}$ is a {\it nontrivial} affine vortex.
\end{enumerate}
They are subject to the following constrains:
\begin{itemize}
\item If $T \neq \emptyset$, then for each $\beta_k\in \wt{V}\setminus V$, $ev_\infty\left( W_{\beta_k} \right) = ev_k(	{\bf u}) \in \ov{M}$.
\end{itemize}
\end{defn}

\begin{defn} A $(1, 1)$-marked stable affine vortex modelled on a labelled admissible pair of rooted trees $\left( \wt{T}, T; V_0 \right)$ is a tuple
\begin{align}
\wt{\bf W}:= \left( \left( {\bf u}, {\bf w} \right); \left\{ W_{\beta_k} \right\}_{\beta_k \in\wt{V}\setminus V } ; \{z_0\} \right)
\end{align}
where
\begin{enumerate}
\item If $T \neq \emptyset$, then $ \left( {\bf u}, {\bf w}\right)$ is an $(s, 1)$-marked genus zero stable map to $\ov{M}$ modelled on the labelled tree $\left( T, \rho_{\wt{T}, T} \right)$; if $T= \emptyset$ then $\left({\bf u}, {\bf w} \right)$ is empty.

\item For each $\beta_k \in \{ \beta_1, \ldots, \beta_s\}= \wt{V}\setminus V$, $W_{\beta_k}$ is an affine vortex;

\item $z_0 \in {\mb C}$.
\end{enumerate}
They are subject to the following constrains:
\begin{enumerate}
\item {\bf Stability.} If $W_{\beta_k}$ is trivial then $V_0 = \beta_k$.

\item If $T \neq \emptyset$, then for each $\beta_k\in \wt{V}\setminus V$, $ev_\infty\left( W_{\beta_k} \right) = ev_k(	{\bf u}) \in \ov{M}$.
\end{enumerate}
\end{defn}

We will call the marked point $z_0$ an {\bf interior marked point} of $\wt{\bf W}$.

The following picture illustrates a typical marked stable affine vortex. Here the ``tear drops'' represent the vortices $W_k$ and the spheres represent the components mapped into $\mu^{-1}(0)$.
\begin{figure}[htbp]
\centering
\includegraphics[scale=0.70]{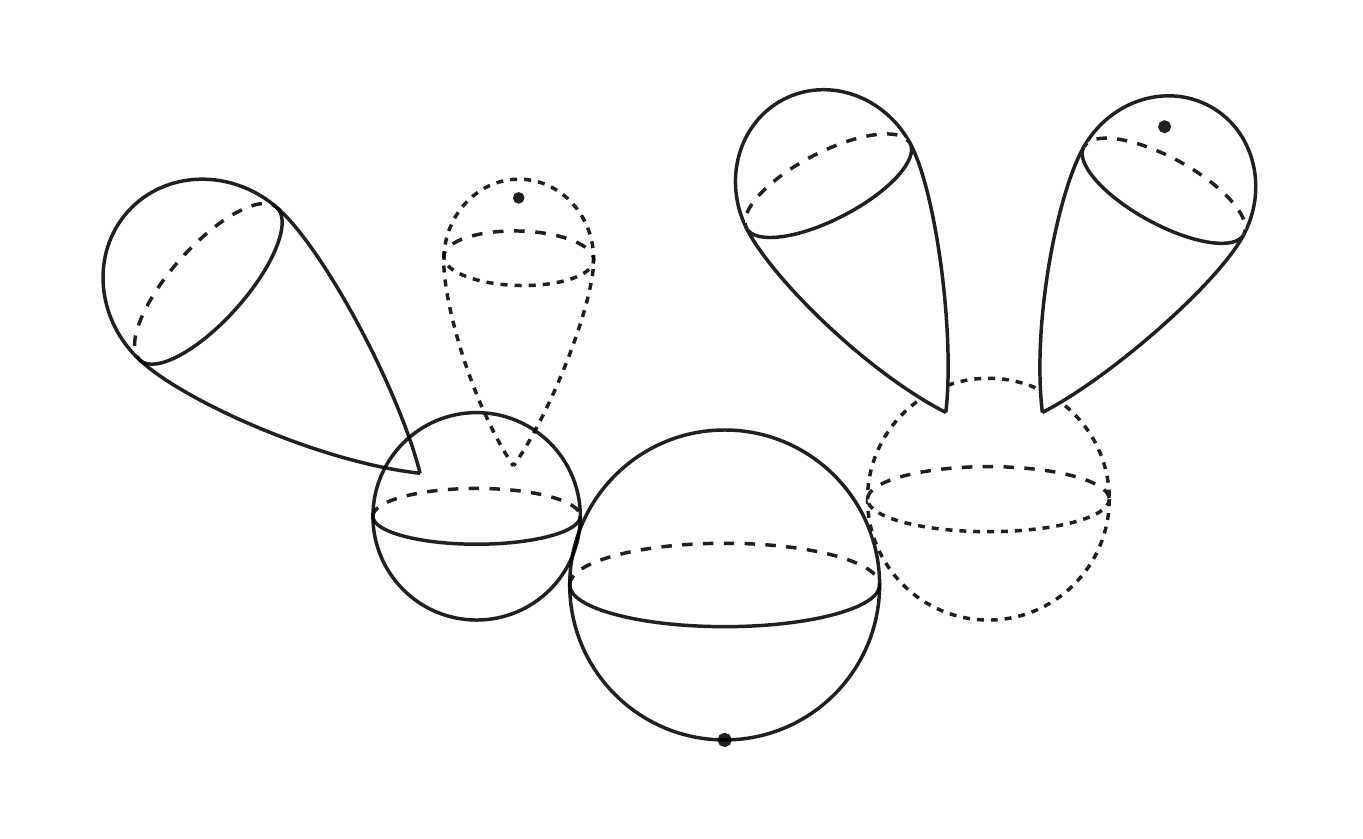}
\caption{A typical marked stable affine vortex.}
\label{figure1}
\end{figure}

\begin{defn}\label{defn45}
An isomorphism between two $(1, 1)$-marked planary stable vortices 
$$\wt{\bf W}:= \left( \left( {\bf u}, {\bf w} \right); \left\{ W_{\beta_k} \right\}_{\beta_k \in\wt{V}\setminus V}; \{z_0\}\right)
$$
and
$$\wt{\bf W}':= \left( \left( {\bf u}', {\bf w}' \right); \left\{ W_{\beta_k}'\right\}_{\beta_k \in \wt{V} \setminus V}; \{z'_0\} \right)$$ 
modelled on the same labelled admissible pair of rooted trees $\left( \wt{T}, T; V_0 \right)$ is a tuple
\begin{align}
\left( f, \left\{ t_{\beta_k}, g_{\beta_k} \right\}_{\beta_k \in \wt{V}\setminus V}, \{ \phi_{v_i} \}_{v_i\in V} \right) 
\end{align}
where
\begin{enumerate}
\item $f: (\wt{T}, T; V_0) \to \left( \wt{T}, T;V_0\right)$ is an automorphism;

\item For each $\beta_k\in \wt{V}\setminus V$, $t_{\beta_k}$ is a translation on ${\mb C}$ and $g_{\beta_k}: {\mb C} \to G$ is a gauge transformation;

\item For each $v_i \in V$, $\phi_{v_i}$ is an affine linear transformation and if $T \neq \emptyset$, then $\left( f|_T, \{ \phi_{v_i}\}_{v_i\in V} \right)$ is an isomorphism between $({\bf u}, {\bf w})$ and $( {\bf u}', {\bf w}' )$ as $(s, 1)$-marked genus zero stable maps modelled on the $\# \left( \wt{V}\setminus V \right)$-labelled rooted tree $\left( T, \rho_{\wt{T}, T} \right)$.
\end{enumerate} 
They must satisfy the following conditions:
\begin{enumerate}
\item For each $\beta_k \in \wt{V} \setminus V$, $(t_{\beta_k}, g_{\beta_k})$ is an isomorphism from $W_{\beta_k}$ to $W_{f(\beta_k)}'$ (see Definition \ref{defn34});

\item $t_{V_0}( z_0') = z_0$.
\end{enumerate}
\end{defn}

\bigskip

For $k = 0, 1$, we define the homology class of a $(k, 1)$-marked stable affine vortex $\wt{\bf W}$ to be the sum of the homology classes of each of its components, which is an element in $H_2^G \left( M; {\mb Z} \right)$. This only depends on its isomorphism class. For $A\in H_2^G(M; {\mb Z})$ we denote by $\ov{\wt {\mc M}}_{1, 1}^{\mb A}(M, A)$ be the category of all $(1, 1)$-marked stable affine vortices of homology class $A$ and the morphisms are isomorphisms between the objects. Denote by $\ov{\mc M}^{\mb A}_{1, 1}(M, A)$ the space of isomorphism classes of $(1,1)$-marked stable affine vortices of homology class $A$. There is a well-defined evaluation map
\begin{align}
ev_\infty: \ov{\mc M}^{\mb A}_{1, 1} (M, A) \to \ov{M}.
\end{align}

\subsection{Degeneration of affine vortices}\label{subsection43}

Now we describe the topology on the moduli space of stable affine vortices. We first give the definition the convergence of a sequence of affine vortices to a stable affine vortex, which essentially coincide with the definition in \cite{quantumkirwan}.

By a theorem of Guillemin and Sternberg, there is a neighborhoof $U_\epsilon$ of $\mu^{-1}(0) \subset M$, which is (canonically) symplectomorphic to $\mu^{-1}(0) \times {\mf g}_\epsilon^*$, where ${\mf g}_\epsilon^*$ is an $\epsilon$-ball of ${\mf g}^*$ centered at the origin with respect to some biinvariant inner product, such that the moment map $\mu$ restricted to $U_\epsilon$ is equal to the projection onto ${\mf g}_\epsilon^*$. Hence there is a well-defined map
\begin{align}
\pi_\mu: U_\epsilon \to \ov{M}.
\end{align}

\begin{defn}\label{defn36} Let $W^\nu= \left( A^\nu, u^\nu\right)$ be a sequence of affine vortices and let $$\wt{\bf W}= \left( ({\bf u}, {\bf w}); \left\{ W_{\beta_k}\right\}_{\beta_k \in \wt{V}\setminus V}\right)$$ be a $(0, 1)$-marked stable affine vortex modelled on an admissible pair of rooted trees $\left( \wt{T}, T \right)$. We say that the sequence $\left\{ W^\nu \right\}$ converges to $\wt{\bf W}$, if $E:= \lim_{\nu \to \infty} E(W^\nu) < \infty$ exists, and 
\begin{align}
E= E(\wt{\bf W})
\end{align}
and for each $v_i\in V$ and $\beta_k \in \wt{V}\setminus V$, there exists affine linear transformations $\phi_{v_i}^\nu$, $\phi_{\beta_k}^\nu$ such that the following conditions are satisfied:
\begin{enumerate}
\item For each $\beta_k\in \wt{V}\setminus V$, $\phi_{\beta_k}^\nu$ is a translation, and there exists a sequence of gauge transformations $g^\nu_{\beta_k}: {\mb C} \to G$ such that $ \left( g_{\beta_k}^\nu \right)^*\left( \phi_{\beta_k}^\nu \right)^* (A^\nu, u^\nu)$ converges to $W_{\beta_k}$ uniformly in any compact subset of ${\mb C}$;

\item For each $v_i \in V$, the sequence affine linear transformation $\phi_{v_i}^\nu(y) = a_{v_i}^\nu y + b_{v_i}^\nu$ with $a_{v_i}^\nu$ converges to infinity; 

\item For each $v_i \in V$, $ \left( \phi_{v_i}^\nu\right)^* \left( \mu \circ u^\nu \right)$ converges to zero uniformly on any compact subset of ${\mb C}\setminus Z_{v_i}$, and the map $ \left( \phi_{v_i}^\nu \right)^* \left( \pi_\mu \circ u^\nu \right)$ converges to $u_{v_i}$ uniformly on any compact subset of ${\mb C}\setminus Z_{v_i}$.

\item If $T \neq \emptyset$, then for any $\beta_k \in \wt{V}\setminus V$, the sequence of affine linear transformations $ \left( \phi^\nu_{\rho_{\wt{T}, T}(\beta_k)} \right)^{-1} \circ \phi^\nu_{\beta_k}$ converges to the constant map $w_{\beta_k}$ uniformly on any compact subset of ${\mb C}$;

\item For any $v_i E v_j \in E$, the sequence of affine linear tranformations $\left( \phi^\nu_{v_j} \right)^{-1} \circ \phi^\nu_{v_i}$ converges uniformly on any compact subset of ${\mb C}$ to the constant map $w_{v_i v_j}$.
\end{enumerate}
\end{defn}

\begin{defn}\label{defn37} Let $W^\nu= \left( A^\nu, u^\nu; z_0^\nu \right)$ be a sequence of $(1, 1)$-marked affine vortices and let $$\wt{\bf W}= \left( ({\bf u}, {\bf w}); \left\{ W_{\beta_k} \right\}_{\beta_k \in \wt{V}\setminus V}; \{z_0\}\right)$$ be a $(1, 1)$-marked stable affine vortex modelled on the labelled admissible pair of rooted trees $(\wt{T}, T; V_0)$. We say that the sequence $W^\nu$ converges to $\wt{\bf W}$, if $E:= \lim_{\nu \to \infty} E(W^\nu) < \infty$ exists, and 
\begin{align}
E= E(\wt{\bf W})
\end{align}
and for each $v_i\in V$ and $\beta_k \in \wt{V}\setminus V$, there exists affine linear transformations $\phi_{v_i}^\nu$, $\phi_{\beta_k}^\nu$ such that the following conditions are satisfied:
\begin{enumerate}
\item For each $\beta_k \in \wt{V}\setminus V$, $\phi_{\beta_k}^\nu$ is a translation, and there exists a sequence of gauge transformations $g^\nu_{\beta_k}: {\mb C} \to G$ such that $ \left( g_{\beta_k}^\nu \right)^*\left( \phi_{\beta_k}^\nu \right)^* (A^\nu, u^\nu)$ converges to $W_{\beta_k}$ uniformly in any compact subset of ${\mb C}$;

\item $\lim_{\nu\to \infty} \left(\phi_{V_0}^\nu \right)^{-1} (z_0^\nu) = z_0$;

\item For each $v_i \in V$, the sequence affine linear transformation $\phi_{v_i}^\nu(y) = a_{v_i}^\nu y + b_{v_i}^\nu$ with $a_{v_i}^\nu$ converges to infinity; 

\item For each $v_i \in V$, $ \left( \phi_{v_i}^\nu\right)^* \left( \mu \circ u^\nu \right)$ converges to zero uniformly on any compact subset of ${\mb C}\setminus Z_{v_i}$, and the map $ \left( \phi_{v_i}^\nu \right)^* \left( \pi_\mu \circ u^\nu \right)$ converges to $u_{v_i}$ uniformly on any compact subset of ${\mb C}\setminus Z_{v_i}$.

\item If $T \neq \emptyset$, then for any $\beta_k \in \wt{V}\setminus V$, the sequence of affine linear transformations $ \left( \phi^\nu_{\rho_{\wt{T}, T}(\beta_k)} \right)^{-1} \circ \phi^\nu_{\beta_k}$ converges to the constant map $w_{\beta_k}$ uniformly on any compact subset of ${\mb C}$;

\item For any $v_i E v_j \in E$, the sequence of affine linear tranformations $\left( \phi^\nu_{v_j} \right)^{-1} \circ \phi^\nu_{v_i}$ converges uniformly on any compact subset of ${\mb C}$ to the constant map $w_{v_i v_j}$.
\end{enumerate}
\end{defn}

Now the notion of convergence in the space of planary stable vortices can be easily defined:
\begin{defn}\label{defn38} Let $\wt{\bf W}^\nu = \left( \left( {\bf u}^\nu, {\bf w}^\nu \right);  \left\{ W_{\beta_k}^\nu\right\}_{\beta_k \in \wt{V}^\nu \setminus V^\nu} ; \{z_0^\nu\} \right)$ be a sequence of $(1, 1)$-marked stable affine vortices modelled on a sequence of labelled admissible pair of rooted trees $\left( \wt{T}^\nu, T^\nu; V_0^\nu \right)$, and 
$$\wt{\bf W}^\infty= \left( \left( {\bf u}^\infty, {\bf w}^\infty \right); \{W_{\beta_k}^\infty \}_{\beta_k\in \wt{V}^\infty \setminus V^\infty };\{ z_0^\infty \}\right)$$ 
be a $(1, 1)$-marked stable affine vortex modelled on the labelled admissible pair of rooted trees $\left( \wt{T}^\infty, T^\infty; V_0^\infty \right)$. We say that the sequence $\left\{ \wt{\bf W}^\nu \right\}$ converges to $\wt{\bf W}^\infty$ if the following condition holds:
\begin{enumerate}
\item If $T^\infty = \emptyset$, then for large $\nu$, $T^\nu = \emptyset$ and $\wt{\bf W}^\nu$ converges to $\wt{\bf W}^\infty$ in the sense of Definition \ref{defn37};

\item If $T^\infty \neq \emptyset$, then there exists a rooted subtree $T' \subset T^\infty$ such that
\begin{enumerate}
\item If $T' = \emptyset$ then for large $\nu$, $T^\nu =\emptyset$;

\item If $T' \neq \emptyset$, then for large $\nu$, $T^\nu \neq \emptyset$, $\# \left( \wt{V}^\nu\setminus V^\nu \right) = \# {\mc B}_{\wt{T}^\infty, T'}$, and the sequence of genus zero stable maps $\left( {\bf u}^\nu, {\bf w}^\nu \right)$ converges to the stable map $\left. \wt{\bf W}^\infty \right|_{T'}$ which is modelled on the rooted tree $T'$ with labelling $\rho_{\wt{T}^\infty, T'}$;

\item For large $\nu$, there is a bijection ${\mf s}: {\mc B}_{\wt{T}^\infty, T'} \to \wt{V}^\nu \setminus V^\nu$ and a unique $B_0$ such that $V^\nu_0 = {\mf s}( B_0)$; and such that for each $B_l \in {\mc B}_{\wt{T}^\infty, T'}$, if $B_l= B_0$, then $\left( W^\nu_{V_0}, z_0^\nu \right)$ converges to $\left. \wt{\bf W}^\infty\right|_{B_0}$ in the sense of Definition \ref{defn37}; if $B_l \neq B_0$, then $W^\nu_{{\mf s}(B_l)}$ converges to $\left. \wt{\bf W}^\infty \right|_{B_l}$ in the sense of Definition \ref{defn36}.
\end{enumerate}
\end{enumerate}
\end{defn}	

Now we can state the compactness theorem of Ziltener. We specialize to the case where we have at most one interior marked points.
\begin{thm}\cite[Theorem 3]{quantumkirwan}\label{thm49}
If $\wt{\bf W}^\nu$ is a sequence of $(k, 1)$-marked stable affine vortices with $k = 0$ or $1$, and $\limsup_{\nu \to \infty} E \left( \wt{\bf W}^\nu\right) < \infty$, then there is a subsequence and a $(k, 1)$-marked stable affine vortex $\wt{\bf W}^\infty$ such that the subsequence converges to $\wt{\bf W}^\infty$ in the sense of Definition \ref{defn38}.
\end{thm}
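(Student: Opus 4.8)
\section*{Proof proposal for Theorem \ref{thm49}}

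The plan is to run a bubble-tree compactness argument in the spirit of \cite[Chapter 5]{McDuff_Salamon_2004}, adapted to the vortex setting, reducing the statement to the two basic compactness results already available: Gromov compactness for $J$-holomorphic spheres in $\ov{M}$ (valid since $\mu^{-1}(0)$ is compact, hence $\ov{M}$ is a compact symplectic manifold) and the local compactness theory for affine vortices of \cite{quantumkirwan}. The content beyond \cite{quantumkirwan} is essentially bookkeeping: organizing the limit over the refined rooted-tree combinatorics of Subsection \ref{subsection42} and tracking the single interior marked point $z_0^\nu$.

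First I would bound the combinatorial type. The energy is additive over the components of a stable affine vortex, and there are two quantization constants: a nontrivial affine vortex has energy at least some $\hbar_V > 0$ (this follows from the topological interpretation $\mc{YMH}(A,u) = \langle [\omega - \mu], \wt{u}_*[S^2]\rangle$ together with the asymptotic analysis, in particular Proposition \ref{prop39}), and a nonconstant $J$-holomorphic sphere in $\ov{M}$ has energy at least some $\hbar_S > 0$. Since $\limsup_\nu E(\wt{\bf W}^\nu) < \infty$ and the stability clauses forbid unstable ghost vertices, the total number of vertices of $\wt{T}^\nu$ is uniformly bounded. Passing to a subsequence, I may assume the labelled admissible pair $(\wt{T}^\nu, T^\nu; V_0^\nu) = (\wt{T}, T; V_0)$ is independent of $\nu$ and that the homology class of each individual component is constant.

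Next I would treat the components separately. For each $v_i \in V$ the maps $u_{v_i}^\nu : {\mb C} \to \ov{M}$ are $J$-holomorphic of uniformly bounded energy with fixed special points; after a subsequence, Gromov compactness produces a limit stable map, refining $v_i$ into a subtree and supplying extra sphere components. For each $\beta_k \in \wt{V}\setminus V$ the $W_{\beta_k}^\nu$ form a sequence of affine vortices of bounded energy, to which I apply the affine-vortex compactness of \cite{quantumkirwan}. Two regimes of rescaling occur: at the \emph{vortex scale}, realized by translations $\phi_{\beta_k}^\nu$, nontrivial affine vortices split off (the analogue of the Taubes splitting of Section \ref{section4}); and at the \emph{divergent scale} $\phi_{v_i}^\nu(y)=a_{v_i}^\nu y + b_{v_i}^\nu$ with $a_{v_i}^\nu \to \infty$ near the puncture $\infty$, one has $\mu(u^\nu)\to 0$ and $\pi_\mu \circ u^\nu$ converging to a $J$-holomorphic sphere in $\ov{M}$. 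Each such limit is itself a stable affine vortex modelled on an admissible pair, which is exactly the datum $\wt{\bf W}^\infty|_{B_l}$ appearing in Definition \ref{defn38}.

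Finally I would assemble the pieces and verify the hypotheses of Definition \ref{defn38}. The node-matching conditions $ev_\infty(W_{\beta_k})=ev_k({\bf u})$ and $u_{v_{i_1}}(\infty)=u_{v_{i_2}}(w_{i_1 i_2})$ pass to the limit because the relevant evaluation maps are continuous under the modes of convergence of Definitions \ref{defn36} and \ref{defn37}; this produces the limit tree $\wt{T}^\infty$ together with the subtree $T'\subset T^\infty$ and the bijection ${\mf s}$ demanded by the definition. For $k=1$ I would track $z_0^\nu$ on the $V_0$-component: after the appropriate rescaling, either $(\phi_{V_0}^\nu)^{-1}(z_0^\nu)$ stays bounded and converges to some $z_0$, or it escapes into a bubble, in which case the stability clause forces the appearance of, and attaches $z_0$ to, the distinguished component $B_0$; passing to a further subsequence fixes the alternative. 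The main obstacle, inherited from \cite{quantumkirwan}, is the \textbf{no energy loss} identity $E = E(\wt{\bf W}^\infty)$: one must rule out energy escaping into the necks between scales. This requires an annulus (isoperimetric-type) energy estimate for the vortex equation (\ref{vortexeqn}) showing that the energy on a long neck, where the rescaled solution is $C^0$-close to $\mu^{-1}(0)$, decays so that the total energy is exactly accounted for by the bubbles; the decay exponent of the energy-density estimate and the radial-gauge asymptotics recorded before Proposition \ref{prop39} are the quantitative inputs that make this work.
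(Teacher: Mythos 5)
The first thing to say is that the paper contains no proof of Theorem \ref{thm49} at all: it is quoted verbatim from Ziltener (\cite[Theorem 3]{quantumkirwan}) and used as a black box, so there is no ``paper's own proof'' to compare your argument against. Your proposal therefore has to be judged on its own merits as a reconstruction of Ziltener's proof. In outline it is the right strategy --- fix the combinatorial type after a subsequence using energy quantization, run Gromov compactness on the sphere components of $\ov{M}$ and a two-scale rescaling analysis on the vortex components, then assemble along the rooted-tree combinatorics while tracking $z_0^\nu$ --- and this is indeed the shape of the argument in \cite{quantumkirwan}.

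However, as a proof it has genuine gaps, and they are exactly where the analytic content lives. First, the quantization constant $\hbar_V>0$ for nontrivial affine vortices does not simply ``follow from'' the identity $\mc{YMH}(A,u)=\langle[\omega-\mu],\wt{u}_*[S^2]\rangle$ together with Proposition \ref{prop39}: that identity gives no positive lower bound unless you already know the realizable equivariant classes pair discretely with $[\omega-\mu]$, and in Ziltener's work quantization is established by a separate compactness-and-contradiction argument, not topologically. Second, your appeal to ``the affine-vortex compactness of \cite{quantumkirwan}'' when treating the components $W_{\beta_k}^\nu$ is circular unless you restrict it to the local statement ($C^\infty_{loc}$ convergence modulo gauge under a uniform energy-density bound); the global statement, that a bounded-energy sequence of single vortices converges to a \emph{stable} affine vortex, is precisely the theorem being proven. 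Third, the two steps you defer --- the no-energy-loss annulus estimate and the matching of evaluations at the nodes (connectedness of the limit tree) --- are not independent conveniences: node matching is itself a consequence of the neck analysis, so your claim that the matching conditions ``pass to the limit because the evaluation maps are continuous under the modes of convergence'' assumes the conclusion of the hard step rather than deriving it. Identifying these estimates as ``the main obstacle'' is correct, but a proof proposal that black-boxes them has black-boxed essentially all of \cite[Theorem 3]{quantumkirwan}; what remains is the bookkeeping layer, which you do carry out correctly.
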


\begin{example}
Suppose we are in the case of {\it Example} \ref{taubesexample}. A sequence of vortices is prescribed by a sequence of points $\ora{z}^\nu \in {\rm Sym}^d {\mb C}$. Modulo translation, the bubbling is caused by the phenomenon that at least two points in $\ora{z}^\nu$ are separated infinitely far away. Then in the limit, $\ora{z}^\nu$ are divided into groups, points in the same group will stay within finite distance and points belonging to different groups will be infinitely far away. Hence the limit depends on a partition $ d= d_1 + \ldots + d_r$, a curve $C \in \ov{\mc M}_{0, r+1}$, and for each $i \in \{1, \ldots, r\}$, an element $\ora{z}_i \in {\rm Sym}^{d_i} {\mb C}$ up to translation. We have the similar description if we add a marked point.
\end{example}

\section{Quantum Kirwan map}\label{section5}

\subsection{The quantum cohomology}

We assume that the symplectic quotient $\left( \ov{M}, \ov{\omega} \right)$ of $\left( M, \omega \right)$ is monotone, i.e., there exists a real number $c>0$ such that $c_1(T\ov{M}) = c [\ov{\omega}]$. We also assume that $H_2\left( \ov{M}; {\mb Z} \right)$ is torsion-free and there is an additive basis $A_1, \ldots, A_m$ of $H_2\left( \ov{M}; {\mb Z} \right) $ such that the homology class of any holomorphic sphere in $\ov{M}$ is of the form $d_1 A_1 + \cdots + d_m A_m$ with $d_i \geq 0$. Then we choose the Novikov ring $\Lambda$ to be the polynomial ring ${\mb R}[ q_1, \ldots, q_m]$ with ${\rm deg} q_i = 2c_1(A_i)$. The quantum cohomology ring of $\ov{M}$ is a ring with underlying abelian group
\begin{align}	
QH^*\left( \ov{M}; \Lambda \right) := H^* \left( \ov{M}; {\mb R} \right) \otimes_{\mb R} \Lambda.
\end{align}
Choose an additive basis $\{e_\nu\}$ of $H^* \left( \ov{M}; {\mb R} \right)$ over ${\mb R}$, the multiplication is defined, for every $a, b \in H^* \left( \ov{M}; {\mb R}\right)$, 	
\begin{align}
a *_q b :=  \sum_{A = d_1 A_1 + \cdots + d_m A_m } \sum_{\nu_1, \nu_2} GW_{A, 3}^{\ov{M}} ( a, b, e_{\nu_1} ) g^{{\nu}_1 {\nu}_2} e_{\nu_2} \otimes q_1^{d_1} \cdots q_m^{d_m}
\end{align}
and extended in a $\Lambda$-linear way to $H^* \left( \ov{M}; \Lambda \right)$. Here $\left(g^{\nu_1 \nu_2}\right)$ is the inverse matrix of the intersection matrix $\left( g_{\nu_1 \nu_2} := \int_{\ov{M}} e_{\nu_1} \cup e_{\nu_2} \right)$. This makes $QH^*\left( \ov{M}; \Lambda \right)$ a graded algebra over $\Lambda$, which is called the {\it quantum cohomology} of $\ov{M}$.

For example, for the case ${\mb P}^n$, $\Lambda= {\mb R}[q]$ with ${\rm deg} q = 2(n+1)$ and the quantum cohomology ring of ${\mb P}^n$ is
\begin{align}
QH^* \left( {\mb P}^n; \Lambda \right) = {\mb R}[c, q]/ \langle c^{n+1}= q\rangle.
\end{align}

\subsection{The Poincar\'e bundle and the evaluation maps}

Let's consider the moduli space of $(1, 1)$-marked stable affine vortices. We call the component which contains the interior marked point the {\it primary component}.

Fix a homology class $A\in H_2^G(M)$. Consider the category of $(1, 1)$-marked stable affine vortices, $\ov{\wt{\mc M}}_{1, 1}^{\mb A}(M, A)$ with homology class equal to $A$. The morphism set between two objects is the set of isomorphisms $\left( f, \{t_{\beta_k}, g_{\beta_k}\} \right)$ defined in Definition \ref{defn45}. Also consider the category of objects $\left( \wt{\bf W}, p \right)$ where $\wt{\bf W}$ is an object of $\ov{\wt{\mc M}}_{1, 1}^{\mb A}(M, A)$ and $p \in S^1 \subset {\mb C}$ which is regarded as a point in the fibre at the marked point $z_0$ of the trivial principal bundle of the primary component. A morphism between $\left( \wt{\bf W}, p \right)$ and $\left( \wt{\bf W}', p' \right)$ is an isomorphism $\left( f, \{ t_{\beta_k}, g_{\beta_k} \} \right)$ between $\wt{\bf W}$ and $\wt{\bf W}'$ such that $ p = p' g_{V_0}(z_0)$. Taking quotient modulo isomorphisms, we get a principal $G$-bundle over $\ov{\mc M}_{1, 1}^{\mb A}(M, A)$, denoted by 
\begin{align}
\ov{\mc P}_0 \to \ov{\mc M}_{1, 1}^{\mb A}(M, A).
\end{align}
The right $G$-action is induced from $\left( \wt{\bf W}, p \right)\cdot g =  \left( \wt{\bf W}, pg \right)$. There is a well-defined equivariant evaluation induced from $\left( \wt{W}, p \right) \mapsto u_{V_0}(p) \in M$, denoted by
\begin{align}
ev_0: \ov{\mc P}_0 \to M.
\end{align}
This induces a map, with an abuse of notation, $ev_0: \ov{\mc M}_{1, 1}^{\mb A}(M, A) \to M_G:= EG\times_G M$, to the Borel construction $M_G$ of $M$, whose cohomology is the equivariant cohomology of $M$. Also, we have the evaluation map $ev_\infty: \ov{\mc M}_{1, 1}^{\mb A}(M, A)\to \ov{M}$ by evaluating the root component at infinity. 

\subsection{The quantum Kirwan map}

Suppose we have a natural orientation of the deformation complex of the equation (\ref{vortexeqn}) (over ${\mb C}$) with gauge tranformations, and there is a well-defined virtual fundamental class $\left[ \ov{\mc M}^{\mb A}_{1, 1}(M, A ) \right]^{vir}$, then the quantum Kirwan map $\kappa_Q: H^*_G \left (M; {\mb R} \right) \to QH^* \left( \ov{M}; \Lambda \right)$ is defined by
\begin{align}\label{eqn57}
\kappa_Q(\alpha) = \sum_{A= d_1 A_1 + \cdots + d_m A_m}  \sum_{\nu_1, \nu_2} \left\langle (ev_0)^* \alpha \cup (ev_\infty)^* e_{\nu_1}, \left[ \ov{\mc M}^{\mb A}_{1,1}(M, A)\right]^{vir} \right\rangle \cdot g^{\nu_1 \nu_2} e_{\nu_2} \otimes q_1^{d_1} \cdots q_m^{d_m}.
\end{align}

Note that if we specialize $\kappa_Q(\alpha)$ at $q_1=\cdots = q_m = 0$, then the only contribution is given by $\ov{\mc M}_{1, 1}^{\mb A}(M, 0)$, which is homeomorphic to $\ov{M}$; the Poincar\'e bundle over this moduli is isomorphic to the	$G$-bundle $\mu^{-1}(0) \to \ov{M}$. So the result will be the classical Kirwan map $\kappa(\alpha)$.

\section{The adiabatic limit of $U(1)$-vortices}\label{section6}

\subsection{Holomorphic $N$-pairs and Hitchin-Kobayashi correspondence for rescaled area form}

From now on, we consider concrete examples. More precisely, we work with $M = {\mb C}^N$ with the diagonal $S^1$-action, whose moment map is
\begin{align}
\mu(z_1, \ldots, z_N) = -{ \sqrt{-1} \over 2} \left(\sum_{j=1}^N |z_j|^2-1 \right).
\end{align}
Here we identify ${\rm Lie} S^1 \simeq i {\mb R} \simeq \left({\rm Lie} S^1 \right)^*$. The symplectic quotient is the projective space ${\mb P}^{N-1}$. We assume $N \geq 2$.

Recall in Subsection \ref{subsection22}, a degree $d$, rank $1$, stable holomorphic $N$-pair over the Riemann surface $\Sigma$ is a tuple $\left( {\mc L}; \varphi_1, \ldots, \varphi_N \right)$ where ${\mc L}\to \Sigma$ is a degree $d$ holomorphic line bundle and $\varphi_j \in H^0({\mc L})$ such that at least one of them is nonzero. On the other hand, let $L\to \Sigma$ be a fixed smooth Hermitian line bundle of degree $d$; let $\Omega_\Sigma$ be a smooth area form on $\Sigma$. Let $\wt{\mc M}^\lambda_\Sigma \left( {\mb C}^N, L \right)$ be the space of all solutions $\left( A; \phi_1, \ldots, \phi_N \right)$ to the following equation:
\begin{align}\label{eqn62}
\left\{\begin{array}{ccl} \ov\partial_A \phi_j & = & 0;\\[0.2cm]
                           F_A & = & {\lambda^2 \sqrt{-1} \over 2} \left(  \sum_{j=1}^N \left| \phi_j \right|^2 - 1 \right) \Omega_\Sigma.
\end{array}\right.
\end{align}
Here $A$ is a unitary connection on $L$ and $\phi_i$ are smooth sections of $L$. And let ${\mc M}^\lambda_\Sigma\left( {\mb C}^N, L \right)$ be the space of gauge equivalence classes of such solutions.

By the Hitchin-Kobayashi correspondence (Theorem \ref{thm21}), the moduli space of (isomorphism classes of) degree $d$ rank $1$ stable holomorhpic $N$-pairs is homeomorphic to ${\mc M}^\lambda_\Sigma( {\mb C}^N, L)$ for any $\lambda$. In particular, we always regard a holomorphic line bundle ${\mc L}$ having the underlying smooth line bundle $L$ with a holomorphic structure given by $\ov\partial_A$, the $(0, 1)$-part of the connection $A$. Take $\lambda = 1$ and we represent a holomorphic $N$-pair to an element in $\wt{\mc M}^1_\Sigma ( {\mb C}^N, L)$. Then for each $\left( B; \varphi_1, \ldots, \varphi_N \right) \in \wt{\mc M}^1_\Sigma({\mb C}^N, L)$ and for each $\lambda>0$, there exists a unique $h_\lambda$ such that
\begin{align}\label{kazdanwarner}
 \Lambda F_B - \sqrt{-1} \Delta h_\lambda -   {\lambda^2 \sqrt{-1} \over 2} \left( e^{-2h_\lambda} \sum_{j=1}^N \left| \varphi_j \right|^2 -1 \right)=0.
\end{align}
The corresponding solution to (\ref{eqn62}) is given by 
\begin{align}
A = B + \sqrt{-1} d^c h_\lambda = B - \partial h_\lambda + \ov\partial h_\lambda,\  \phi_j = e^{-h_\lambda} \varphi_j
\end{align}
which is the result by applying the ``purely imaginary gauge tranformation'' $e^{ h_\lambda}$ to the tuple $\left( B; \varphi_1, \ldots, \varphi_N \right)$.

For any $\lambda$-vortex $ \left( A; \phi_1, \ldots, \phi_N \right) \in \wt{\mc M}^\lambda_\Sigma( {\mb C}^N, L)$, the $\lambda$-energy density and $\lambda$-energy are given by
\begin{align}
e_\lambda(z):=  \sum_{j=1}^N \left| d_A \phi_j (z) \right|^2 + {\lambda^2 \over 2} \left| \sum_{j=1}^N \left|\phi_j(z) \right|^2 -1 \right|^2 
\end{align}
\begin{align}\label{eqn66}
E_\lambda(A; \phi_1, \ldots, \phi_N): = \int_\Sigma e_\lambda(z) \Omega_\Sigma = 2\pi  {\rm deg} L.
\end{align}

Let $\left( {\mc L}, \ora{\varphi} \right) =\left( {\mc L};  \varphi_1, \ldots, \varphi_N\right)$ be a stable holomorphic $N$-pair. The {\it base locus} of $\ora{\varphi}$ is the intersection of the zeroes of $\varphi_j$, denoted by $Z\subset \Sigma$. The multiplicity $m_p$ of $p\in Z$ is the minimum of the multiplicities of $p$ as a zero of $\varphi_j$. Then $\ora{\varphi}$ defines a holomorphic map
\begin{align}
\begin{array}{cccc}
\left[ \ora{\varphi}\right]: &    \Sigma \setminus Z  & \to & {\mb P}^{N-1}\\
                         &  z & \mapsto & \left[ \varphi_1(z), \ldots, \varphi_N(z) \right]
                         \end{array}
\end{align}
which extends uniquely to a holomorhic map from $\Sigma$ to ${\mb P}^{N-1}$ by removal of singularity, which is still denoted by $\left[ \ora{\varphi}\right]$. It is easy to see that
\begin{align}
{\rm deg} \left[ \ora{\varphi} \right] = {\rm deg} L - \sum_{p\in Z} m_p.
\end{align}

\subsection{The adiabatic limit}

We will study the behavior of a sequence of objects $(A_\lambda, u_\lambda) \in \wt{\mc M}^\lambda_{\Sigma} ( {\mb C}^N, L)$ as $\lambda \to \infty$ and give a refined version of the bubbling zoology of the adiabatic limit. In particular, we will give an algebraic condition on the bubbling of nontrivial affine vortices.

From now on until the end of this section, we fix a sequence of stable holomorhpic $N$-pairs, which, by the Hitchin-Kobayashi correspondence, can be identified with a sequence $\left( B_k; \ora{\varphi}_k\right) = \left( B_k; \varphi_{1, k}, \ldots, \varphi_{N, k} \right) \in \wt{\mc M}^1_\Sigma\left( {\mb C}^N, L \right)$. We assume that this sequence converges to $\left( B; \varphi_1, \ldots, \varphi_N\right) \in \wt{\mc M}^1_\Sigma \left( {\mb C}^N, L \right)$, whose base locus is denoted by $Z\subset \Sigma$. We also fix a sequence $\lambda_k \to \infty$, hence for each $k$ let $h_k: \Sigma \to {\mb R}$ denote the unique solution to the equation (\ref{kazdanwarner}) for the vortex $\left( B_k; \ora{\varphi}_k \right)$ and $\lambda = \lambda_k$. We denote $A_k:= B_k - \partial h_k + \ov\partial h_k$ and $\phi_{j, k}:= e^{-h_k} \varphi_{j, k}$.

For each $p \in \Sigma$, we choose a local holomorphic coordiante $\xi_p: B_{r_p} \to U_p$, where $r_p>0$ and $B_{r_p} \subset {\mb C}$ is the radius $r_p$ disk centered at $0 \in {\mb C}$, $U_p$ is a neighborhood of $p\in \Sigma$. Up to rescaling, we can assume that $\xi_p^* \Omega_\Sigma= \sigma_p ds dt$, where $(s, t)$ is the standard coordinates on ${\mb C}$ and $\sigma_p: B_{r_p} \to {\mb R}_+$ is a smooth function, such that $\sigma_p(0) = 1$. We also trivialize $\xi_p^* L$ smoothly over $B_{r_p}$ by $\zeta_p: B_{r_p} \times {\mb C} \to \xi_p^* L$. We call such a triple $\left( U_p, B_{r_p}, \xi_p, \zeta_p \right)$ an admissible chart near $p$. Then any triple $\left( L , A, u\right)$ over $\Sigma$ can be pulled back by an admissible chart to $B_{r_p}$, to a triple $\left( B_{r_p} \times {\mb C},  d+ \alpha; \phi \right)$ over $B_{r_p}$.

Then for any $q \in \xi_p\left( B_{{r_p\over 2}} \right) \subset U_p$ and for $r\in \left( 0, {r_p \over 2}\right)$, the inclusion $B_r \simeq  B_{\xi_p^{-1}(q)}(r) \subset B_{r_p}$ induces a chart near $q$, from the admissible chart $(U_p, B_{r_p}, \xi_p, \zeta_p)$. Then for any large number $\lambda$, we zoom in by $B_{\lambda r} \simeq B_r$. With the admissible chart near $p$ understood, we abbreviate by
$$s_{q, \lambda}^* \left(A, u \right)$$
to be the pull-back object on the trivial bundle over $B_{\lambda r}$.

\begin{lemma}\label{lemma41} 
Suppose $p_k \in \Sigma$ is a sequence of points such that $\lim_{k \to \infty} p_k = p$ and $r_k>0$ is a sequence of real numbers such that \begin{align}\label{410}
\lim_{k \to \infty} r_k = 0,\  \lim_{k \to \infty} R_k: = \lim_{k \to \infty} \lambda_k r_k = + \infty.
\end{align}
Choosing an adimissible chart centered at $p$. Then we obtain a sequence of pull-back pairs $$\left( d+ \alpha_k; \phi_k \right):= s_{p_k, \lambda_k}^*\left( A_k, u_k \right)$$ on $B_{R_k}$. Then, there exists a subsequence (still indexed by $k$), such that $\left( d+ \alpha_k, \phi_k\right)$ converges in $C^\infty_{loc}$ on ${\mb C}$ to an affine vortex with finite energy. Note that the limit can be trivial, and we don't need to take gauge transformations.
\end{lemma}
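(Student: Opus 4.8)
The statement is a compactness-type result: a sequence of rescaled pull-back pairs $(d+\alpha_k;\phi_k)$ on growing disks $B_{R_k}$ subconverges in $C^\infty_{loc}$ to a finite-energy affine vortex. My plan is to combine the uniform energy bound with elliptic bootstrapping and an energy-quantization argument to rule out escape of energy to infinity. First I would record that each $(A_k,u_k)\in\wt{\mc M}^{\lambda_k}_\Sigma({\mb C}^N,L)$ solves the $\lambda_k$-vortex equation \eqref{eqn62}, so by \eqref{eqn66} its total energy is $2\pi\,{\rm deg}\,L$, independent of $k$. Under the rescaling $s_{p_k,\lambda_k}$ (zooming in by a factor $\lambda_k r_k=R_k$), the rescaled pair $(d+\alpha_k;\phi_k)$ on $B_{R_k}$ satisfies the \emph{unrescaled} affine vortex equation (the $\lambda_k^2$ factor is absorbed by the dilation, since $\Omega_\Sigma=\sigma_p\,ds\,dt$ with $\sigma_p(0)=1$), and the energy on $B_{R_k}$ is bounded above by $2\pi\,{\rm deg}\,L$. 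The asphericity assumption (Hypothesis, part (1)) is what lets me exclude holomorphic-sphere bubbling in the interior.

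The core of the argument is a standard $\epsilon$-regularity/mean value inequality for the vortex energy density: there exist $\hbar>0$ and $C>0$ such that if the local energy of a vortex on a disk $B_r(z)$ is below $\hbar$, then the energy density at the center is bounded by $C r^{-2}$ times that local energy. I would cite or adapt the mean value inequality from \cite{Gaio_Salamon_2005} (and its affine-domain version in \cite{quantumkirwan}) for this. The plan is then: fix any compact $K\subset{\mb C}$; since the total energy on $B_{R_k}\supset K$ is uniformly bounded, cover $K$ by finitely many small disks and apply $\epsilon$-regularity on each to obtain, after passing to a subsequence, a uniform $C^0$ bound on $e_{A_k,u_k}$ over $K$. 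This is where the convergence hypothesis on $(B_k;\ora\varphi_k)$ and the boundedness of $\sigma_p$ near $p$ enter to guarantee the geometry of the rescaled domains stabilizes to the flat ${\mb C}$.

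With a uniform energy density bound on compact sets in hand, I would run the elliptic bootstrap: put the sequence in a good gauge (Uhlenbeck gauge locally, or simply use that over ${\mb C}$ bundles are trivial and invoke the interior regularity from \cite[Proposition D.2]{Ziltener_thesis}), so that $\alpha_k$ and $\phi_k$ are bounded in $C^0$ hence in $W^{1,p}_{loc}$; the vortex equation then gives uniform $W^{2,p}_{loc}$ bounds, and iterating yields uniform $C^\infty_{loc}$ bounds. A diagonal subsequence argument over an exhaustion of ${\mb C}$ by compact sets produces a $C^\infty_{loc}$ limit $(d+\alpha_\infty;\phi_\infty)$, which solves the affine vortex equation by passing to the limit termwise; lower semicontinuity of energy under $C^\infty_{loc}$ convergence together with the uniform bound $2\pi\,{\rm deg}\,L$ shows the limit has finite energy. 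The remark in the statement that no gauge transformations are needed I would justify by noting the pull-backs already live on the trivial bundle and the natural connection representatives obtained from the Chern connection of the $h_k$-modified metric are smooth and bounded.

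The main obstacle I expect is the $\epsilon$-regularity step and, relatedly, excluding energy loss at the boundary of the growing disks. Because the domains $B_{R_k}$ are expanding rather than fixed, I must ensure the mean value inequality applies with constants uniform in $k$; this is precisely where asphericity is essential (ruling out the first bubble type in the bubbling zoology, so that $\limsup\lambda_k^{-2}\|e_k\|_{L^\infty}<\infty$ as in \eqref{eqn314}) and where I must check that no energy concentrates on scales comparable to $R_k$. I would handle this by combining the global energy identity \eqref{eqn66} with the a priori energy-density decay of affine vortices (the Corollary of \cite{Ziltener_Deday} quoted above), so that any concentration is confined to bounded scales and the compact-set estimates close up uniformly.
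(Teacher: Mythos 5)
Your overall scheme for the convergence itself (uniform density bound coming from asphericity via (\ref{eqn314}), then elliptic bootstrap and a diagonal argument) matches the second half of the paper's proof, and you are right that (\ref{eqn314}), not the total energy bound, is the real source of compactness. But there is a genuine gap: the lemma asserts that the subsequence converges \emph{without} taking gauge transformations, and your proposal cannot deliver this. Passing to local Uhlenbeck gauges (or invoking Ziltener's regularity proposition) produces gauge transformations $g_k$ and proves only that $g_k^*(d+\alpha_k,\phi_k)$ subconverges, which is a strictly weaker statement; your stated justification for discarding the gauges --- that the connection representatives are ``smooth and bounded'' --- does not suffice, since a $C^0$-bounded sequence of connection $1$-forms need not have any $C^0_{loc}$-convergent subsequence (one needs control of first derivatives, i.e.\ of both $d\alpha_k$ and $d^*\alpha_k$). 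The paper gets exactly this from the special structure of the solutions fixed in Section \ref{section6}: $A_k = B_k - \partial h_k + \ov\partial h_k$, where $(B_k;\ora{\varphi}_k)$ is a \emph{convergent} sequence in $\wt{\mc M}^1_\Sigma({\mb C}^N,L)$ and $h_k$ solves the Kazdan--Warner equation (\ref{kazdanwarner}). Under the rescaling $s_k^*$, the term $s_k^*\beta_k$ tends to zero on compact sets (a $1$-form contracts by $\lambda_k^{-1}$ and $\beta_k$ is uniformly bounded because $B_k$ converges), while the remaining term $-\sqrt{-1}\,s_k^*d^ch_k$ is automatically co-closed ($d^*d^ch\equiv 0$) and its exterior derivative is the rescaled curvature, uniformly bounded by (\ref{eqn314}). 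Elliptic estimates for the operator $(d,d^*)$ then give $W^{1,p}_{loc}$ bounds and hence $C^0_{loc}$ subconvergence of the connections with no gauge fixing whatsoever; the sections are then handled as pseudo-holomorphic maps for a $C^0$-convergent family of almost complex structures. This use of the algebraic origin of the $A_k$ is the missing idea in your proposal.

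A secondary, repairable issue: your $\epsilon$-regularity step is circular as stated. Covering a compact set $K$ by small disks and applying the mean value inequality requires each disk to carry energy below the threshold $\hbar$, and the uniform total energy $2\pi\,{\rm deg}\,L$ does \emph{not} rule out concentration of an amount $\geq\hbar$ at a point --- concentration at scale $\lambda_k^{-1}$ is precisely the phenomenon this lemma is built to capture. What closes the loop is the pointwise bound (\ref{eqn314}), which you do invoke; but once you have it, the energy density of the rescaled pairs is uniformly bounded on all of $B_{R_k}$ by direct rescaling, and the $\epsilon$-regularity detour becomes superfluous. Likewise, the decay estimate of Ziltener that you cite applies to the limiting affine vortex, not to the sequence, and is not needed here: the lemma claims only $C^\infty_{loc}$ subconvergence to a finite-energy limit, not that no energy escapes to infinity, so lower semicontinuity of the energy together with the uniform bound already suffices.
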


\begin{proof}
Let's first see why we don't need to take gauge transformations. Indeed, with respect to the admissible chart write
\begin{align}
B_k = d + \beta_k = d+  \Phi_k dx + \Psi_k dy.
\end{align}
Abbreviate $s_k^* = s_{p_k, \lambda_k}^*$, we have
\begin{align}
s_k^* \beta_k = \lambda_k^{-1} \left( s_k^* \Phi_k ds + s_k^* \Psi_k dt \right)
\end{align}
converges to zero in any compact subset $K\subset {\mb C}$, because the sequence of connections $B_k$ converges on $\Sigma$. Now we see that
\begin{align}
s_k^* A_k = d + s_k^* \beta_k - \sqrt{-1} s_k^* d^c h_k
\end{align}
with $d( s_k^* \beta_k - \sqrt{-1} s_k^* d^c h_k)$ uniformly bounded, $d^* ( s_k^* \beta_k - \sqrt{-1} s_k^* d^c h_k) \to 0$. Hence there exists a subsequence of $s_k^* A_k$ (still indexed by $k$) converging in $C_{loc}^0$.	

Then, on the product $B_{R_k} \times {\mb C}^N$, the connection $s_k^*A_k$ and the almost complex structures on ${\mb C}^N$ and $B_{R_k}$ induces a sequence of almost complex structures $J_k$ with respect to which $\ora\phi_k$ is holomorphic. $J_k$ converges (in $C^0$) because $s_k^* A_k$ converges weakly in $W^{1, p}(K)$. The energy density of $\ora{\phi}_k$ is also uniformly bounded by (\ref{eqn314}) and (\ref{eqn66}). By the standard method, a subsequence of $\ora{\phi}_k$ converges to a section $\ora{\phi}_\infty$ which is holomorphic with respect to the connection $d+ \alpha_\infty$ on ${\mb C}$. And it is easy to check that the limit pair $\left( d+\alpha_\infty, \ora{\phi}_\infty \right)$ satisfies the vortex equation on ${\mb C}$ with respect to the area form $ds dt$. Finally it is standard to show that the limit is actually smooth and the convergence is in $C^\infty_{loc}$.
\end{proof}

The above lemma is somehow a fact {\it a priori}, which will be used for many times to guarantee the existence of converging subsequence.

\subsection{Convergence away from base locus}

\begin{prop} If the limit of the sequence of points $p_k$ in Lemma \ref{lemma41} lies in $\Sigma \setminus Z$, then for any sequence $r_k>0$ satisfying  (\ref{410}), the limit affine vortex of any convergent subsequence we obtained in Lemma \ref{lemma41} is trivial. In particular, the function $\mu_k = \sum_{j=1}^N \left| \phi_{j, k} \right|^2 - 1$ converges to zero in $C_{loc}^0 \left( \Sigma \setminus Z \right)$.
\end{prop}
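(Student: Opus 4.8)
\emph{Proof proposal.} The plan is to reduce everything to a single quantitative statement: the moment map $\mu_k=\sum_{j=1}^N|\phi_{j,k}|^2-1$ tends to zero \emph{uniformly on compact subsets of $\Sigma\setminus Z$}. Granting this, triviality of the bubble is almost immediate. In Lemma \ref{lemma41} the rescaling $s_{p_k,\lambda_k}$ samples $\ora\phi_k$ at points converging to $p\in\Sigma\setminus Z$, so for each fixed $z\in\mb{C}$ one gets $|\ora\phi_\infty(z)|^2-1=\lim_{k}\mu_k(s_{p_k,\lambda_k}(z))=0$; hence $\mu(\ora\phi_\infty)\equiv 0$. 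As the limit is an affine vortex for the area form $ds\,dt$, its curvature equation forces $F_{A_\infty}=0$, so after a gauge transformation on the simply connected $\mb{C}$ the connection is trivial and each $\phi_{j,\infty}$ is a bounded entire function with $\sum_j|\phi_{j,\infty}|^2\equiv 1$; by Liouville each is constant, so $\ora\phi_\infty$ is a constant point of $\mu^{-1}(0)$ and the vortex is trivial. Thus the whole content is the displayed convergence, which is also exactly the ``in particular'' clause.

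Two global facts come for free. First, the energy identity (\ref{eqn66}) gives $\int_\Sigma\tfrac{\lambda_k^2}{2}|\mu_k|^2\,\Omega_\Sigma\le 2\pi\,{\rm deg}\,L$, hence $\|\mu_k\|_{L^2(\Sigma)}=O(\lambda_k^{-1})\to 0$. Second, writing $\rho_k:=\sum_j|\phi_{j,k}|^2=1+\mu_k$ and using that $\ora\phi_k$ is a holomorphic section of $L\otimes\mb{C}^N$ with Chern curvature $F_{A_k}\cdot\mathrm{Id}$, the Poincar\'e--Lelong identity together with the second line of (\ref{eqn62}) yields, away from the common zeros,
\[
\sqrt{-1}\,\Lambda\,\partial\ov\partial\log\rho_k=Q_k+\tfrac{\lambda_k^2}{2}\mu_k,\qquad Q_k=\big|d[\ora\phi_k]\big|^2_{FS}\ge 0,
\]
where $[\ora\phi_k]=[\ora\varphi_k]:\Sigma\to\mb{P}^{N-1}$ (the purely imaginary gauge factor $e^{-h_k}$ drops out of the projectivization). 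Applying the maximum principle to $\log\rho_k$ at its global maximum on $\Sigma$ kills the nonnegative term $Q_k$ and gives $\mu_k\le 0$, i.e. $\rho_k\le 1$, everywhere.

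It remains to prove the matching \emph{lower} bound $\rho_k\to 1$ on a fixed compact $K\subset\Sigma\setminus Z$; this is the crux. The decisive structural input is that $Q_k$ is \emph{uniformly bounded on $K$}: since $\ora\varphi_k\to\ora\varphi$ and $p\notin Z$, the maps $[\ora\varphi_k]$ converge in $C^\infty_{\rm loc}(\Sigma\setminus Z)$, so $Q_k=|d[\ora\varphi_k]|^2_{FS}\le M$ on $K$ independently of $k$. Setting $\eta_k:=-\tfrac12\log\rho_k=h_k-\tfrac12\log\psi_k\ge0$ (with $\psi_k=\sum_j|\varphi_{j,k}|^2\ge c_0>0$ on $K$), the identity above becomes the semilinear equation
\[
\sqrt{-1}\,\Lambda\,\partial\ov\partial\,\eta_k=\tfrac{\lambda_k^2}{4}\big(1-e^{-2\eta_k}\big)-\tfrac12 Q_k,
\]
in which the reaction term is nonnegative, coercive (comparable to $\tfrac{\lambda_k^2}{2}\min(\eta_k,1)$) and dominates the bounded forcing. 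I would then run a boundary-layer barrier argument: the large coefficient $\lambda_k^2$ confines any supersolution to size $O(\lambda_k^{-2})$ in the interior of $K$, up to a contribution decaying like $e^{-c\lambda_k\,\mathrm{dist}(\cdot,\partial K)}$ from the boundary values, so that $\sup_{K'}\eta_k\to 0$ on any $K'\subset\subset K$ once the boundary values are controlled at a merely polynomial rate in $\lambda_k$.

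The main obstacle is exactly this last point: \emph{a priori} a near-zero of $\rho_k$ (equivalently a spike of $\eta_k$) could form over $K$, which is precisely the formation of a vortex bubble away from $Z$. Ruling this out is where the algebraic description of energy concentration enters: the zeros of $\rho_k=e^{-2h_k}\psi_k$ are the common zeros of $\ora\varphi_k$, i.e. the base loci $Z_k$, and these converge to $Z$ and hence stay away from $K$ for large $k$; combined with the energy-density bound (\ref{eqn314}) this should give the crude confinement of $\eta_k$ on $\partial K$ needed to start the barrier. Feeding the resulting uniform estimate back yields $\mu_k\to 0$ in $C^0_{\rm loc}(\Sigma\setminus Z)$, and with the first paragraph the bubble is trivial. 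I expect the barrier/confinement step to be the only genuinely delicate part; the rest is bookkeeping around Lemma \ref{lemma41}.
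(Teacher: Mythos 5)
Your overall plan is viable but note that it runs in the opposite direction from the paper's proof: the paper first shows that any bubble obtained at $p \notin Z$ is trivial, by rescaling the Kazdan--Warner equation (\ref{kazdanwarner}) --- the rescaled data $\sum_j |s_k^*\psi_{j,k}|^2$ converges to a \emph{nonzero constant} $b$ precisely because $p$ is not a base point, and the only finite-energy solution of $\Delta h + \frac{1}{2}(b e^{-2h}-1)=0$ on ${\mb C}$ is the constant $\frac{1}{2}\log b$ --- and only then deduces the $C^0_{loc}$ convergence of $\mu_k$ by contradiction with the bubbling alternative of \cite{Gaio_Salamon_2005}. You instead try to prove the $C^0_{loc}$ statement directly. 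Your reduction of triviality to that statement, your maximum-principle bound $\rho_k \le 1$, and your barrier scheme are all correct as far as they go; the problem is the step you yourself flag as the crux.

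That step has a genuine gap, and neither of the two ingredients you propose can fill it. (i) ``The base loci $Z_k$ stay away from $K$'' rules out \emph{zeros} of $\rho_k$ on $K$ but not \emph{near-zeros}, and for $N\ge 2$ this distinction is fatal: unlike in the Taubes case $N=1$, a nontrivial affine vortex need not have any zero in its section (by the classification of Section \ref{section7}, the degree-one vortex with polynomial data $(z,1,0,\ldots,0)$ has nowhere-vanishing section $e^{-h}(z,1,0,\ldots,0)$), so energy can a priori concentrate --- i.e.\ $\rho_k$ can dip towards $0$ --- at a point of $K$ with no common zero of $\ora{\varphi}_k$ anywhere near it. Indeed bubbles do form at points of $Z$ even when $Z_k=\emptyset$ for every $k$: this is exactly what happens in the construction of Section \ref{section7}, where the sections $\varphi_i^{(k)}$ generically have no common zeros and yet a nontrivial vortex bubbles off at the origin. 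So the position of $Z_k$ is not the right control; what matters is the base locus of the \emph{limit} pair, and the implication ``$\psi_k$ bounded below on $K$ $\Rightarrow$ no dip of $\rho_k$ on $K$'' is precisely what must be proved. (ii) The density bound (\ref{eqn314}) cannot supply this implication: a vortex bubble has $|\mu_k|\sim 1$ on a ball of radius $\sim \lambda_k^{-1}$ and energy density $\sim\lambda_k^2$ there, which saturates but does not violate (\ref{eqn314}), and is also compatible with $\|\mu_k\|_{L^2}=O(\lambda_k^{-1})$; so no confinement of $\eta_k$ --- not even the polynomial-in-$\lambda_k$ bound your barrier needs --- follows from it.

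The hole can be plugged while keeping your scheme, but by a different mechanism: apply the maximum principle globally to $h_k$ itself rather than to $\log\rho_k$. At a maximum point of $h_k$ on the compact surface $\Sigma$, equation (\ref{kazdanwarner}) gives $\frac{\lambda_k^2}{2}\left( e^{-2h_k}\psi_k - 1 \right) \ge -\sqrt{-1}\Lambda F_{B_k} \ge -C$ with $C$ independent of $k$ (since $B_k \to B$), hence $e^{2h_k} \le 2\sup_\Sigma \psi_k$ there, so $\sup_\Sigma h_k \le C'$ uniformly; consequently $\rho_k = e^{-2h_k}\psi_k \ge e^{-2C'}\psi_k \ge e^{-2C'}c_0 > 0$ on $K$ for large $k$, i.e.\ $\eta_k$ is uniformly bounded on $K$, and your barrier argument then closes the proof. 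Alternatively, one can dispense with uniform estimates altogether and argue as the paper does, via rescaling plus uniqueness for the Kazdan--Warner equation on ${\mb C}$ (compare Lemma \ref{lemma73}).
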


\begin{proof}
In Lemma \ref{lemma41}, denote by $h_k'= s_k^* h_k$, then (\ref{kazdanwarner}) implies
\begin{align}
\Delta h_k'  + {1\over 2} \left( e^{- 2 h_k'} \sum_{j=1}^N \left| s_k^* \psi_{j,k} \right|^2 -1 \right) + {\sqrt{-1} \over  \lambda_k^2 } s_k^*\left( \Lambda F_{B_k}\right)=0	.
\end{align}	
Because $p$ is not in the base locus, the sequence of functions $\sum_{j=1}^N \left| s_k^* \psi_{j,k} \right|^2$ converges to a nonzero constant $b$ uniformly on any compact subset of ${\mb C}$. Then the sequence $h_k'$ converges to a solution to the following Kazdan-Warner equation on ${\mb C}$
\begin{align}
\Delta h + {1\over 2} \left( b e^{- 2h } - 1 \right) = 0.
\end{align}
But there is only one solution $h$ which has assymptotic value ${1\over 2} \log b$, which is the constant. This implies that the vortex is trivial. 

Then if there exists $p_k \in K$ such that $\lim_{k \to \infty} p_k = p \in K$ and $\lim_{k \to \infty} \left| \mu_k(p_k) \right| >0$, this means the energy density blows up like $\lambda_i^{-2}$ near $p$. Then as in \cite{Gaio_Salamon_2005}, the process in Lemma \ref{lemma41} will produce a nontrivial affine vortex, which contradicts with the above.
\end{proof}

Then away from the base locus, the sequence of sections $\ora{\phi}_k$ will sink into the level set $\mu^{-1}(0)$. Then we expect that, by projecting to the quotient $\mu^{-1}(0)/ S^1$, it will converges to a holomorphic map in ${\mb P}^{N-1}$. The precise meaning is described as follows.

For $\ora{z} \in {\mb C}^N$ with $|\mu(\ora{z})| < {1\over 2}$, we have the map
\begin{align}
\begin{array}{ccc}
\mu^{-1}\left( \sqrt{-1} (-{1\over 2}, {1\over 2}) \right) & \to & \mu^{-1}(0) \times \sqrt{-1} (-{1\over 2}, {1\over 2})\\
                        \ora{z} & \mapsto & \left( \left( \sum_{j=1}^N |z_j|^2 \right)^{-{1\over 2}} \ora{z} , \mu(\ora{z}) \right).
\end{array}
\end{align}
This induces the map $\pi: \mu^{-1}\left( \sqrt{-1} (-{1\over 2}, {1\over 2}) \right) \to {\mb P}^{N-1}$ by projecting.

\begin{prop}
For any compact subset $K \subset \Sigma \setminus Z$, the sequence of maps $\pi\circ \ora{\phi}_k: K \to {\mb P}^{N-1}$ converges to the holomorphic map $\left[ \ora{\psi} \right]|_K: K \to {\mb P}^{N-1}$.
\end{prop}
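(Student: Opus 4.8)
The plan is to notice that, away from the base locus, the projection $\pi\circ\ora{\phi}_k$ is \emph{exactly} the projectivization of the original holomorphic sections, so that the purely imaginary gauge factor $e^{-h_k}$ produced by the Kazdan--Warner equation plays no role whatsoever, and the desired convergence follows directly from the convergence of the $N$-pairs themselves. Here $[\ora{\varphi}]$ denotes the holomorphic map $\Sigma\setminus Z\to{\mb P}^{N-1}$, $z\mapsto[\varphi_1(z),\ldots,\varphi_N(z)]$, defined earlier from the limiting pair.

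First I would check that $\pi\circ\ora{\phi}_k$ is well defined on $K$ for large $k$. By the preceding proposition, $\mu_k=\sum_{j=1}^N|\phi_{j,k}|^2-1\to0$ in $C^0_{loc}(\Sigma\setminus Z)$, so $\sup_K|\mu(\ora{\phi}_k)|=\tfrac12\sup_K|\mu_k|<\tfrac12$ once $k$ is large; hence $\ora{\phi}_k(K)\subset\mu^{-1}\big(\sqrt{-1}(-\tfrac12,\tfrac12)\big)$ and $\ora{\phi}_k$ does not vanish on $K$, so the composition makes sense there. Next I would unwind the definition of $\pi$: since $\mu^{-1}(0)=\{\ora{z}:\sum|z_j|^2=1\}$ and the $S^1$-quotient $\mu^{-1}(0)\to{\mb P}^{N-1}$ is precisely the projectivization $\ora{z}\mapsto[\ora{z}]$, the normalization step $\ora{z}\mapsto(\sum|z_j|^2)^{-1/2}\ora{z}$ does not alter the projective class, and one gets $\pi(\ora{z})=[\ora{z}]$ for every $\ora{z}$ in the domain. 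Using $\phi_{j,k}=e^{-h_k}\varphi_{j,k}$, this yields on $K$
\begin{align}
\pi\circ\ora{\phi}_k=\big[\,e^{-h_k}\varphi_{1,k},\ldots,e^{-h_k}\varphi_{N,k}\,\big]=\big[\varphi_{1,k},\ldots,\varphi_{N,k}\big]=[\ora{\varphi}_k],
\end{align}
because the common nonzero scalar $e^{-h_k}$ is killed by projectivization.

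Finally I would deduce the convergence from the convergence of the $N$-pairs. The assumption $(B_k;\ora{\varphi}_k)\to(B;\ora{\varphi})$ gives $\varphi_{j,k}\to\varphi_j$ in $C^\infty(K)$, while $K\cap Z=\emptyset$ together with compactness forces $\inf_K\sum_{j=1}^N|\varphi_j|^2>0$, so $\ora{\varphi}$ is bounded away from the origin on $K$. Since projectivization ${\mb C}^N\setminus\{0\}\to{\mb P}^{N-1}$ is smooth, $[\ora{\varphi}_k]\to[\ora{\varphi}]=[\ora{\varphi}]|_K$ uniformly on $K$, and combined with the identity above this gives $\pi\circ\ora{\phi}_k\to[\ora{\varphi}]|_K$.

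The real content of the statement is thus the single observation that the gauge factor solving the Kazdan--Warner equation is a global scalar and disappears under the projection to ${\mb P}^{N-1}$; there is no genuine analytic difficulty to overcome. The only place where work from the previous proposition is needed is in guaranteeing that the images land in the domain of $\pi$, i.e.\ that $\mu_k\to0$ on $K$, which also forces $\ora{\phi}_k\neq0$ so that the projective maps are everywhere defined on $K$.
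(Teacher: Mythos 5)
Your proposal is correct and follows essentially the same route as the paper: the paper's proof is precisely the observation that $\left(\pi\circ\ora{\phi}_k\right)|_K = \left[\ora{\varphi}_k\right]|_K$ because the gauge factor $e^{-h_k}$ is a nonvanishing scalar killed by projectivization, after which convergence follows from convergence of the $N$-pairs, which are base point free over $K$ in the limit. Your additional verification that the images of $\ora{\phi}_k$ land in the domain of $\pi$ (via $\mu_k\to 0$ from the preceding proposition) is a detail the paper leaves implicit, but it is the same argument.
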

\begin{proof} Indeed, observe that $\left(\pi\circ \ora{ \phi}_k \right)|_K = \left[ \ora{\psi}_k  \right]|_K$. And since $\ora{\psi}_k $ converges to an $N$-pair which is base point free over $K$, the convergence is obvious.
\end{proof}

\subsection{Bubbling at base points}

Now for each $p \in Z$, take an admissible chart $(U_p, B_r, \xi_p, \zeta_p)$ near $p$. Also for each $k$, take a local holomorphic section $e_k$ such that $e_k(0) = \zeta_p(0)(1) \in L_p$ and assume that $\lim_{k \to \infty} e_k$ converges to a smooth section of $L$ over $U_p$.

With respect to the admissible chart, 
\begin{align}
B_k  = d + \beta_k ,\ \beta_k \in \Omega^1 \left( U, i {\mb R}\right),\  B = d + \beta
\end{align}
and $\beta_k \to \beta$, $\psi_{j, k} \to \psi_j$ in $C^\infty$-topology.

For each $k$, and $j= 1, \ldots, N$, denote by $Z_{j, k} = \left( \phi_{j, k} \right)^{-1}(0) \cap U$. Each element $z_{j, k} \in Z_{j, k} $ has an associated multiplicity ${\mf m}(z_{j, k}) \in {\mb Z}^+$. By taking a subsequence if necessary, we assume that there are finite sets ${\mf Z}_j$ and bijections
$$\rho_k : {\mf Z}_j \to Z_{j, k}$$
and maps ${\mf m}: {\mf Z}_j \to {\mb Z}_{>0}$ such that ${\mf m} \left( \rho_k  ({\mf z}_j ) \right) ={\mf m} ({\mf z}_j)$. (Note that $Z_{j, k}$ could be empty. This happens only if the section $\phi_{j, k}$ converges to zero on $\Sigma$.)

We then consider all sequences $\{p_k \}\subset U$ that converges to $p$, which are identified with a sequence of complex numbers $\left\{ z_k  \right\}$ converging to the origin. By taking a further subsequence if necessary, we assume that for each $j$, there is a subset ${\mf W}_j\subset {\mf Z}_j$ (associated to the sequence $\left\{ z_k  \right\}$) such that
\begin{align}
\limsup_{k \to \infty} \lambda_k \left| z_k-  \rho_k ( {\mf w}_j) \right| < +\infty, \forall {\mf w}_j \in {\mf W}_j;
\end{align}
\begin{align}
\liminf_{k \to \infty} \lambda_k  \left| z_k - \rho_k  ( {\mf z}_j )  \right| = +\infty, \forall {\mf z}_j \in {\mf Z}_j \setminus {\mf W}_j.
\end{align}

We can write that for $z \in U$,
\begin{align}
\phi_{j, k} (z) = e_k (z)  \prod_{ {\mf z}_j \in {\mf Z}_j } \left( z- \rho_k ({\mf z}_j) \right)^{{\mf m}({\mf z}_j)} f_{j, k} (z)
\end{align}
where $f_{j,k}$ is a nonvanishing holomorphic function. Then the assumption that $\psi_{j, k}$ converges to $\psi_j $ implies that
\begin{align}
f_j := \lim_{\nu \to \infty} f_{j, k}
\end{align}
exists.

For each $j$, denote
\begin{align}
d_j:= \sum_{{\mf w}_j \in {\mf W}_j} {\mf m}( {\mf w}_j) \geq 0.
\end{align}

Define
\begin{align}
t_k := \max\left\{ \lambda_k^{- d_j} \left| f_{j, k} \left( z_k \right) \right| \prod_{{\mf z}_j \in {\mf Z}_j \setminus {\mf W}_j} \left| z_k  - \rho_k  ({\mf z}_j)\right|^{{\mf m}({\mf z}_j)} : {\mf W}_j \neq \emptyset \right\}
\end{align}
\begin{align}
T_k := \max \left\{  \left| f_{j, k} \left( z_k \right) \right| \prod_{{\mf z}_j \in {\mf Z}_j} \left| z_k - \rho_k  ({\mf z}_j) \right|^{{\mf m}({\mf z}_j)} : {\mf W}_j = \emptyset\right\}
\end{align}
with the convention that $\max \emptyset = 0$.

\begin{lemma}\label{lemma64}
If $\liminf_{k \to \infty} {T_k  \over t_k } < +\infty$, then a subsequence of the sequence $s_k^*\left( A_k; \ora{\phi}_k  \right)$ converges to a nontrivial affine vortex; if $\liminf_{k \to \infty} {T_k \over t_k } = +\infty$, then there is a subsequence $s_k^* \left( A_k; \ora{\phi}_k \right)$ converges to a trivial affine vortex.
\end{lemma}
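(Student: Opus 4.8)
The plan is to run Lemma \ref{lemma41} to extract a limit and then decide its triviality by reading off the asymptotics of the rescaled holomorphic $N$-pairs at the common amplitude scale $M_k := \max\{t_k, T_k\}$. First I would pass to a subsequence along which $s_k^*(A_k, \ora{\phi}_k)$ converges in $C^\infty_{loc}$ to a finite-energy affine vortex $(A_\infty, \ora{\phi}_\infty)$, as guaranteed by Lemma \ref{lemma41}; the whole content of the statement is to compute the energy of this limit, i.e.\ to decide whether $\ora{\phi}_\infty$ is a nonzero constant vector lying in $\mu^{-1}(0)$ (with $A_\infty = d$), the unique trivial possibility.

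Second, I would analyze the rescaled holomorphic sections $s_k^*\varphi_{j,k}(w) = \varphi_{j,k}(z_k + \lambda_k^{-1} w)$ through the given factorization, separating the merging zeros (${\mf W}_j$) from the escaping ones. For bounded $w$ and ${\mf w}_j \in {\mf W}_j$ one has $\lambda_k(z_k + \lambda_k^{-1}w - \rho_k({\mf w}_j)) \to w + c_{{\mf w}_j}$ for some constant $c_{{\mf w}_j}$, whereas for ${\mf z}_j \in {\mf Z}_j \setminus {\mf W}_j$ the factor $z_k + \lambda_k^{-1}w - \rho_k({\mf z}_j)$ is asymptotic, locally uniformly in $w$, to the constant $z_k - \rho_k({\mf z}_j)$. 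Dividing by $M_k$ and extracting a further subsequence, $M_k^{-1} s_k^*\varphi_{j,k}$ converges in $C^\infty_{loc}$ to a holomorphic $N$-pair $\Phi = (\Phi_1, \ldots, \Phi_N) \neq 0$, where each $\Phi_j$ is a degree-$d_j$ polynomial proportional to $\prod_{{\mf w}_j}(w + c_{{\mf w}_j})^{{\mf m}({\mf w}_j)}$ when $j$ is active with amplitude $\asymp M_k$, a nonzero constant when $j$ is inactive with amplitude $\asymp M_k$, and identically $0$ otherwise. By definition of $t_k$ and $T_k$, when $\liminf T_k/t_k < \infty$ a subsequence has $M_k \asymp t_k$, so some surviving $\Phi_j$ is a nonconstant polynomial; when $\liminf T_k/t_k = \infty$ one has $M_k \asymp T_k$ and every surviving $\Phi_j$ is constant, so $\Phi$ is a nonzero constant vector.

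Third, and this is the technical heart, I would identify $\Phi$ with the holomorphic $N$-pair underlying $(A_\infty, \ora{\phi}_\infty)$. Writing $\phi_{j,k} = e^{-h_k}\varphi_{j,k}$, so that $s_k^*\phi_{j,k} = e^{-s_k^* h_k}\, s_k^*\varphi_{j,k}$, the task is to show that $\tilde h_k := s_k^* h_k + \log M_k$ converges in $C^\infty_{loc}$ to a smooth function $h_\infty$, whence $\ora{\phi}_\infty = e^{-h_\infty}\Phi$ and in particular $\ora{\phi}_\infty$ vanishes exactly at the zeros of $\Phi$. Pulling back (\ref{kazdanwarner}) and using that $\sqrt{-1}\lambda_k^{-2} s_k^*(\Lambda F_{B_k}) \to 0$, the functions $\tilde h_k$ satisfy in the limit the Kazdan--Warner equation $\Delta \tilde h + \tfrac12(e^{-2\tilde h}|\Phi|^2 - 1) = 0$ on ${\mb C}$, with $|\Phi|^2 = \sum_j |\Phi_j|^2$; this is exactly the ${\mb C}$-analogue of the Hitchin--Kobayashi/Taubes correspondence of \emph{Example} \ref{taubesexample}, whose solution is unique for the prescribed asymptotics. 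The uniform estimates on $h_k$ that feed this limit, and the matching of normalizations at the rescaled infinity (where $|\Phi(w)|\,e^{-\tilde h} \to 1$), are the delicate points, since all of the triviality information is encoded in how $e^{-h_k}$ balances the amplitudes $t_k$ and $T_k$.

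Finally, with $\ora{\phi}_\infty = e^{-h_\infty}\Phi$ and $[\ora{\phi}_\infty] = [\Phi]$ as maps into ${\mb P}^{N-1}$ (the scaling and $e^{-h}$ cancel in projective coordinates), the dichotomy follows. If $\liminf T_k/t_k < \infty$, then $\Phi$ is nonconstant, so either $[\Phi]$ is a nonconstant holomorphic map to ${\mb P}^{N-1}$, or $\Phi$ has a base point where $\ora{\phi}_\infty$ genuinely vanishes; in either case $(A_\infty, \ora{\phi}_\infty)$ is not a constant map into $\mu^{-1}(0)$ and hence is nontrivial. If $\liminf T_k/t_k = \infty$, then $\Phi$ is a nonzero constant vector, so $|\Phi|^2$ is a nonzero constant, $h_\infty$ is constant by the same Kazdan--Warner uniqueness used in the proof of convergence away from the base locus, and $\ora{\phi}_\infty$ is a constant vector in $\mu^{-1}(0)$ with $A_\infty = d$, a trivial vortex. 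I expect Step three to be the main obstacle: controlling $s_k^* h_k$ uniformly under rescaling and pinning its normalization to $-\log M_k$ is what converts the elementary amplitude comparison between $t_k$ and $T_k$ into a statement about the energy of the limit vortex.
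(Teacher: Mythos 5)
Your overall strategy coincides with the paper's: rescale, split the zeros of the holomorphic data into merging (${\mf W}_j$) and escaping ones, normalize by the dominant amplitude (your $M_k=\max\{t_k,T_k\}$ plays the role of $t_k$ in the first case and $T_k$ in the second), and read off triviality of the limit from whether the limiting tuple $\Phi$ of polynomials is a nonzero constant vector or genuinely nonconstant. Your case analysis and the concluding dichotomy (nonconstant $[\Phi]$ or a base point $\Rightarrow$ nontrivial; constant $\Phi$ plus Kazdan--Warner uniqueness $\Rightarrow$ trivial) are correct and match the paper. A small slip: the normalization must be $\wt{h}_k:=s_k^*h_k-\log M_k$, not $+\log M_k$, so that $s_k^*\phi_{j,k}=e^{-\wt{h}_k}\left(M_k^{-1}s_k^*\varphi_{j,k}\right)$.

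The gap sits exactly where you predict it, in your Step three, and your proposed route does not close it. Passing to the limit in the pulled-back Kazdan--Warner equation to show $\wt{h}_k\to h_\infty$ presupposes local uniform $C^0$ bounds on $\wt{h}_k$ from above and below --- precisely the ``delicate'' estimates you leave unaddressed --- so the argument as sketched is circular; moreover, invoking uniqueness (Lemma \ref{lemma73}-type statements) requires asymptotic control at infinity of the putative limit, which is not available at this stage. The paper closes this step without any independent estimate on $h_k$: by Lemma \ref{lemma41} the sections $s_k^*\phi_{j_0,k}$ already converge in $C^\infty_{loc}$ (and, crucially, no gauge transformations are needed there), while $M_k^{-1}s_k^*\varphi_{j_0,k}$ converges by explicit computation to $\Phi_{j_0}\not\equiv 0$; hence $e^{-\wt{h}_k}$ is a ratio of two convergent sequences whose denominator has a limit vanishing at only finitely many points, so it converges locally uniformly away from the zeros of $\Phi_{j_0}$, and its limit is not identically zero since a finite-energy affine vortex cannot have identically vanishing section. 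In other words, the gauge factor is recovered by division from the \emph{a priori} convergence of Lemma \ref{lemma41}, rather than by elliptic estimates; once you make this observation, the rest of your outline goes through essentially verbatim.
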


\begin{proof} 
In the first case, we assume by taking a subsequence that there exists $j_0$ such that for all $k$,
$$t_k  = \lambda_k^{- d_{j_0}} \left| f_{j_0,k } \left( z_k \right) \right| \prod_{{\mf z}_{j_0} \in {\mf Z}_{j_0} \setminus {\mf W}_{j_0}} \left| z_k - \rho_k  ({\mf z}_{j_0}) \right|^{{\mf m}( {\mf z}_{j_0})}.$$
And, if $\lim_{k \to \infty} \psi_{j_0, k} = 0$, we may assume by taking a subsequence that 
\begin{align}
\lim_{k \to \infty} {f_{j_0, k} \over \left| f_{j_0, k }(z_k ) \right| } 
\end{align}
converges to a nonvanishing holomorphic function on $U$.

Then we have
\begin{multline}
s_k^* \phi_{j_0, k}(w) = e^\nu(z) e^{-h_k\left( z_k + \lambda_k^{-1}w \right)} \prod_{{\mf z}_{j_0} \in {\mf Z}_{j_0}} \left( {w\over \lambda_k} + \left( z_k - \rho_k ({\mf z}_{j_0}) \right) \right)^{{\mf m}({\mf z}_{j_0})} f_{j_0, k} ( z_k + \lambda_k^{-1} w) \\
=  e_k(z) e^{-h_k \left( z_k + \lambda_k^{-1}w \right)} \prod_{{\mf w}_{j_0} \in {\mf W}_{j_0}} \left( {w\over \lambda_k} + \left( z_k - \rho_k ( {\mf w}_{j_0}) \right) \right)^{ {\mf m}( {\mf w}_{j_0})} \prod_{ {\mf z}_{j_0} \in {\mf Z}_{j_0} \setminus  {\mf W}_{j_0}} \left( {w\over \lambda_k} + \left( z_k - \rho_k ({\mf z}_{j_0}) \right) \right)^{{\mf m}( {\mf z}_{j_0})}  f_{j_0, k}\\
= e_k(z) e^{-h_k \left( z_k + \lambda_k^{-1}w \right)}  t_k \prod_{{\mf w}_{j_0} \in {\mf W}_{j_0}} \left( w +  \lambda_k \left( z_k - \rho_k ({\mf w}_{j_0}) \right) \right)^{{\mf m}({\mf w}_{j_0})} \prod_{ {\mf z}_{j_0} \in {\mf Z}_{j_0} \setminus  {\mf W}_{j_0}} \left( {w\over \lambda_k ( z_k - \rho^\nu ({\mf z}_{j_0}))} + 1 \right)^{{\mf m}({\mf z}_{j_0})} \\
\cdot  {f_{j_0, k}(z_k + \lambda_k^{-1}w) \over \left| f_{j_0, k}(z_k)\right|} \prod_{{\mf z}_{j_0} \in {\mf Z}_{j_0} \setminus {\mf W}_{j_0}} \left( { z_k - \rho_k ( {\mf z}_{j_0}) \over \left| z_k - \rho_k( {\mf z}_{j_0}) \right|} \right)^{ {\mf m}( {\mf z}_{j_0})}.
\end{multline}
Then by taking a subsequence, the product of the last four factors converges to a nonzero polynomial, uniformly on any compact subset of ${\mb C}$, which has zeroes at $\lim_{\nu \to \infty}\lambda_\nu \left( z^\nu- \rho^\nu( {\mf w}_{j_0}) \right)$ for each ${\mf w}_{j_0} \in {\mf W}_{j_0}$. Then by the {\it a priori} convergence of $s_k^* \phi_{j_0, k}$ (Lemma \ref{lemma41}), this implies that the function
\begin{align}
e^{-H_k(w)}: = e^{- h_k \left( z_k + \lambda_k^{-1} w \right) + \log t_k}
\end{align}
converges to a smooth function, denoted by $H_0$. Then, for all $j\neq j_0$, 
\begin{multline}
s_k^* \phi_{j, k}(w) = e_k(z) e^{-h_k\left( z_k + \lambda_k^{-1}w \right)} \prod_{{\mf z}_j \in {\mf Z}_j} \left( {w\over \lambda_k} + (z_k - \rho_k( {\mf z}_{j})) \right)^{{\mf m} ( {\mf z}_{j})} f_{j ,k}\\
= e_k(z) f_{j, k} e^{-H_k(w)} t_k^{-1} \prod_{ {\mf w}_{j} \in {\mf W}_{j}} \left( {w\over \lambda_k} + (z_k -\rho_k( {\mf w}_{j}) ) \right)^{{\mf m}( {\mf w}_{j})} \prod_{ {\mf z}_{j} \in {\mf Z}_{j} \setminus {\mf W}_j} \left( {w\over \lambda_k} + (z_k - \rho_k ({\mf z}_{j})) \right)^{{\mf m}({\mf z}_{j})}\\
= e_k(z) e^{-H_k(w)} \prod_{ {\mf w}_j \in {\mf W}_j } ( w + \lambda_k( z_k- \rho_k({\mf w}_j)) )^{{\mf m}({\mf w}_j)} \prod_{ {\mf z}_{j} \in {\mf Z}_{j} \setminus  {\mf W}_j} \left( {w\over \lambda_k ( z_k - \rho_k( {\mf z}_j) )} + 1 \right)^{{\mf m}({\mf z}_j)} { f_{j, k}(z_k + \lambda_k^{-1}w)  \over |f_{j, k}(z_k)|}
\\
\cdot \prod_{ {\mf z}_j \in {\mf Z}_j \setminus {\mf W}_j} \left( z_k - \rho_k( {\mf z}_j ) \right)^{{\mf m}({\mf z}_j)} { \left| f_{j, k}(z_k)\right| \over \lambda_k^{d_j} t_k}.
\end{multline}
Then taking a subsequence, we can assume that
\begin{align}
a_j:= \lim_{k \to \infty} \prod_{ {\mf z}_j \in {\mf Z}_j \setminus {\mf W}_j} \left( z_k - \rho_k( {\mf z}_j ) \right)^{{\mf m}({\mf z}_j)} { \left| f_{j, k}(z_k)\right| \over \lambda_k^{d_j} t_k}\in {\mb C}
\end{align}
exists. Then taking a further subsequence, we have
\begin{align}
\lim_{k \to \infty} s_k^* \phi_{j, k}(w) = e_\infty(0) H_0(w) a_j \prod_{{\mf w}_j \in {\mf W}_j} \left(  w+ \lim_{k \to \infty} \lambda_k( z_k - \rho_k({\mf w}_j) )\right)^{{\mf m}({\mf w}_j)} .
\end{align}
Since we know that the limit section $\lim_{k \to \infty} s_k^* \left( \phi_{1, k}, \ldots, \phi_{N, k} \right)$ doesn't vanish identically, $H_0 \neq 0$ and hence the sequence of functions $\wt{h}_k:= s_k^* h_k - \log t_k$ converges to a smooth function $\wt{h}$ on ${\mb C}$. And the limit vortex is nontrivial because at least for $j = j_0$, it is of the form 
$$a_{j_0} e^{- \wt{h}(w)} \prod_{s=1}^{d_{j_0}} ( w+ \xi_s)$$

Now we look at the second case: $\liminf_{k \to \infty} {T_k \over t_k} = +\infty$. Then we may assume that there exists a subsequence and $j_0$ such that
\begin{enumerate}
\item For each $j$, $\lim_{k \to \infty} \psi_{j, k} = 0 \Longrightarrow { f_{j, k} \over \left|f_{j, k}(z_k)\right|}$ converges to a nonvanishing holomorphic function on $U$;

\item ${\mf W}_{j_0} = \emptyset$ and $$T_k= \left| f_{j_0, k} (z_k) \right| \prod_{{\mf z}_{j_0} \in {\mf Z}_{j_0}} \left| z_k - \rho_k ( {\mf z}_{j_0}) \right|^{{\mf m}({\mf z}_{j_0})}.$$
\end{enumerate}
Then 
\begin{multline}
s_k^* \phi_{j_0, k}(w) = e_k(z) e^{- h_k(z)} f_{j_0, k} \prod_{ {\mf z}_{j_0} \in {\mf Z}_{j_0}} \left( {w\over \lambda_k} + ( z_k - \rho( {\mf z}_{j_0})) \right)^{ {\mf m}( {\mf z}_{j_0})}\\
= e_k(z) e^{- h_k(z)} T_k \prod_{ {\mf z}_{j_0} \in {\mf Z}_{j_0}} \left( { w\over \lambda_k ( z_k - \rho_k ( {\mf z}_{j_0}))} + 1 \right)^{ {\mf m}( {\mf z}_{j_0})}  { f_{j_0, k} \over \left| f_{j_0, k}(z_k) \right|} \prod_{{\mf z}_{j_0} \in {\mf Z}_{j_0}} \left( { z_k - \rho_k( {\mf z}_{j_0}) \over \left| z_k - \rho_k ( {\mf z}_{j_0}) \right| }\right)^{{\mf m}( {\mf z}_{j_0})}.
\end{multline}
(By taking a subsequence) the product of the last three factors converges to a nonzero constant, which implies that $e^{-h_k(z_k + \lambda_k^{-1} w)} T_k$ converges to a smooth function $H_0: {\mb C}\to {\mb R}$.

Then by our assumption $\lim_{k \to \infty} {T_k\over t_k} = \infty$, we can easily see that $s_k^* \phi_{j_0, k}$ dominates other $s_k^* \phi_{j, k}$. In particular, if ${\mf W}_j\neq \emptyset$, then 
\begin{align}
\lim_{k \to \infty} s_k^* \phi_{j, k} = 0;
\end{align}
if ${\mf W}_j = \emptyset$, then 
\begin{align}
\lim_{k \to \infty} s_k^* \phi_{j, k} = a_j H_0
\end{align}
for some constant $a_j$. Since the limit of $s_k^* \left( \phi_{1,k}, \ldots, \phi_{N, k} \right)$ doesn't vanish identically, we see that the sequence of functions $h_k( z_k + \lambda_k^{-1} w) - \log T_k$ converges to a smooth function $\wt{h}$ on ${\mb C}$, which must be (by Lemma \ref{lemma41}) the solution to the Kazdan-Warner equation
$$\Delta \wt{h} + {1\over 2} \left( e^{- 2{\wt{h}}} \sum_{{\mf W}_j= \emptyset} |a_j|^2 - 1  \right) =0 .$$
This implies that the limit vortex is trivial.
\end{proof}

From the proof we also see, if we replace $z_k$ be $z_k' $ with $\limsup_{k \to \infty} \lambda_k^{-1} |z_k - z_k'| < \infty$, then the nontrivial affine vortex we get from the first case will differ by a translation. 

\begin{rem} If we assume that $\varphi_j = \lim_{k \to \infty} \varphi_{j, k}$ is nonzero for each $j$, then from the proof of the above lemma, we see that a nontrivial affine vortex bubbles off if each $\phi_{j, k}$ contributes at least one zero so that they concentrate in a rate no slower than $\lambda_k\to \infty$ (the case where some $\varphi_j=0$ is more subtle). Due to the energy quantization (i.e., the energy of nontrivial affine vortices is bounded from below), we can find all possible nontrivial affine vortex bubbles at the base point $p$. There might be some missing degrees, caused by the zeroes of $\phi_{j, k}$ which concentrate in a slower rate than $\lambda_k^{-1}$. This slower concentration means holomorphic spheres in ${\mb P}^{N-1}$ bubble off, and the bubbling happens at the ``neck region'' between different affine vortices and between affine vortices and the original domain $\Sigma$. 
\end{rem}

\section{Classification of $U(1)$-vortices and its moduli spaces}\label{section7}
	
We consider an arbitray affine vortex $\left( A,  \ora{\phi} \right)$ with target ${\mb C}^N$ with finite energy. 

\begin{lemma}
There exists a complex gauge transformation $g= e^{ h_1 + i h_2}$ such that $g^* A$ is the trivial connection. If $g'$ is another complex gauge tranformation which also transform $A$ to the trivial connection, then $g' = g e^f$ where $f$ is an entire function.
\end{lemma}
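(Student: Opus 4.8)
The plan is to reduce the existence assertion to solving an inhomogeneous Cauchy--Riemann equation on $\mb C$. First I would write $A = d + \alpha$ with $\alpha \in \Omega^1(\mb C, i\mb R)$ and decompose $\alpha = \alpha^{1,0} + \alpha^{0,1}$, $\alpha^{0,1} = a\, d\ov z$; unitarity of $A$ forces $\alpha^{1,0} = -\ov a\, dz$. For a complex gauge transformation $g = e^{h_1 + i h_2} = e^u$, the imaginary part $h_2$ acts as an ordinary $U(1)$-gauge transformation, contributing $i\,dh_2$ to the connection, while the real part $h_1$ acts in the non-compact direction exactly as in the Hitchin--Kobayashi discussion of Section \ref{section6}, contributing $-\partial h_1 + \ov\partial h_1$ (it fixes the Hermitian metric, moves the holomorphic structure, and records the Chern connection). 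Combining the two, $g^* A = A - \partial \ov u + \ov\partial u$, which is again a unitary connection whose $(0,1)$-part is $(a + \partial_{\ov z} u)\, d\ov z$. Thus $g^* A = d$ if and only if this $(0,1)$-part vanishes, i.e. $\partial_{\ov z} u = -a$; by unitarity the vanishing of the $(1,0)$-part is the complex conjugate of the same equation, so this single scalar equation is precisely the triviality condition.

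For existence I would solve $\ov\partial u = -\alpha^{0,1}$ on $\mb C$. Since $\mb C$ is Stein (equivalently, the $\ov\partial$-Poincar\'e lemma gives $H^{0,1}(\mb C) = 0$), every smooth $(0,1)$-form is $\ov\partial$-exact, so a smooth complex-valued solution $u$ exists; writing $u = h_1 + i h_2$ yields the required $g = e^{h_1 + i h_2}$. I would emphasize that this step uses only smoothness of $A$, not the vortex equation or finiteness of energy, and that there is no topological obstruction because the bundle over the contractible base $\mb C$ is trivial.

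For uniqueness, suppose $g' = e^{u'}$ also sends $A$ to $d$. Then $\partial_{\ov z} u' = -a = \partial_{\ov z} u$, so $f := u - u'$ satisfies $\partial_{\ov z} f = 0$, i.e. $f$ is entire; hence $g = g' e^{f}$, equivalently $g' = g\, e^{-f}$ with $-f$ entire, which is exactly the asserted form of the uniqueness.

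The hard part is not the computation but pinning down the correct transformation law. The naive adjoint action $A \mapsto A + g^{-1} dg$ would leave the curvature $F_A = d\alpha$ unchanged and could therefore never reach the flat connection $d$ unless $A$ were already flat; the correct complexified action is the one that fixes the Hermitian metric, moves the holomorphic structure, and takes the resulting Chern connection, under which the curvature genuinely changes. Getting this action right---and checking that annihilating the $(0,1)$-part really does trivialize the \emph{full} unitary connection rather than only its holomorphic structure---is the sole point of substance; once it is in place, the $\ov\partial$-solvability and the uniqueness argument are routine.
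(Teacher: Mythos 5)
Your proof is correct, and it takes a genuinely different route from the paper's. The paper proceeds in two real steps: first it finds a unitary gauge transformation $e^{if}$ putting $A$ into Coulomb gauge, which amounts to solving a Poisson equation $\Delta f = -i\,d^*(A-d)$ on ${\mb C}$; then, using that $*(g_1^*A - d)$ is closed once the Coulomb condition holds, it invokes the Poincar\'e lemma to find a real $h$ with $e^h$ completing the trivialization. You instead collapse everything into a single complex equation: having identified the correct complexified action $g^*A = A - \partial\ov u + \ov\partial u$ (which matches the convention the paper uses in Section \ref{section6}), you observe that triviality of $g^*A$ is equivalent to the one scalar equation $\partial_{\ov z} u = -a$, solvable by the $\ov\partial$-Poincar\'e lemma on ${\mb C}$. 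The two arguments are at bottom the same solvability fact packaged differently (the paper splits it as Laplace equation plus exactness; you use Dolbeault vanishing $H^{0,1}({\mb C})=0$, and in both cases one needs the standard but nontrivial global solvability on all of ${\mb C}$ with no decay hypothesis on the data). Your version is more streamlined, and it has a concrete advantage: the uniqueness clause falls out immediately ($\partial_{\ov z}(u-u')=0$ forces $u - u'$ entire), whereas the paper's proof establishes only existence and leaves uniqueness implicit; your identification of the $(1,0)$-part as minus the conjugate of the $(0,1)$-part is also exactly the check needed to see that killing $\ov\partial u + \alpha^{0,1}$ trivializes the full unitary connection. What the paper's two-step route buys in exchange is the explicit factorization of a complex gauge transformation into a unitary part and a positive-real part, with the intermediate Coulomb gauge being independently useful (it reappears immediately after the lemma, where the paper fixes the Coulomb representative $A = d - \partial h + \ov\partial h$).
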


\begin{proof}
Suppose $A= d + \Phi ds + \Psi dt$. We first show that there exists a unitary gauge transformation $g_1 = e^{i f}$ such that
\begin{align}\label{eqn61}
d^*\left( g_1^*A - d\right) = 0,
\end{align}
i.e., the Coulomb gauge condition. Indeed, 
\begin{align}
d^* \left( g_1^*A - d \right) = d^* \left( \left( \Phi - i {\partial f\over \partial x} \right) dx + \left( \Psi - i {\partial f \over \partial y}\right) dy    \right)=  i \Delta f - {\partial \Phi\over \partial x} - {\partial \Psi \over \partial y}.
\end{align}
Hence there exists $f$ such that (\ref{eqn61}) holds. Then for $g_2 = e^h$ with $h$ a real valued function,
\begin{align}
g_2^* g_1^* A - d = -i \left( {\partial h \over \partial x} dy - {\partial h \over \partial y} dx \right) + (g_1^*A  -d) = - i * dh + ( g_1^* A - d) = -i * \left(    dh  - i * ( g_1^*A -d)  \right).
\end{align}
The existence of $h$ such that $g_2^* g_1^* A = d$ follows from Poincar\'e lemma.
\end{proof}

Then, up to a unitary gauge transformation, we can assume that $\left( A, \ora{\phi} \right)$ is in Coulomb gauge, and $A = d - \partial h + \ov\partial h$ for some real valued function $h$ and $\ov\partial_A = \ov\partial + \ov\partial h = e^{-h} \ov\partial e^h$. So $\ov\partial_A \phi_j = 0 \Leftrightarrow \ov\partial ( e^h \phi_j) = 0$. The vortex equation is equivalent the following equation on real valued function $h: {\mb C}\to {\mb R}$
\begin{align}\label{eqn74}
\Delta h + {1\over 2} \left( e^{-2h} \sum_{j=1}^N \left| \psi_j(z) \right|^2 - 1 \right) = 0
\end{align}
where $\psi_j$ are entire functions. 

\begin{lemma}
If $E\left( A, \ora{\phi} \right) < \infty$, then $\psi_j = e^h \phi_j$ are polynomials and the Maslov index of $\left( A, \ora{\phi} \right)$ is equal to the maximum of the degrees of $\psi_j$. 
\end{lemma}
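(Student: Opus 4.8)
The plan is to prove polynomiality by showing that the entire functions $\psi_j$ grow at most polynomially and then invoking the generalized Liouville theorem, and afterwards to read off the degree and the Maslov index from the behaviour at infinity. First I would record the elementary consequences of finite energy. Since the energy density decays like $|z|^{-4+\epsilon}$, the moment map value tends to $0$ at infinity, so $w:=\sum_j|\phi_j|^2\to 1$; in particular each $|\phi_j|$ is bounded and $\ora\phi$ is nonvanishing outside a compact set, so the common zeros of $\psi_1,\dots,\psi_N$ lie in a fixed disc. A maximum principle applied to $w$, using $\Delta w=4\sum_j|\partial_A\phi_j|^2+w(w-1)$ together with $\ov\partial_A\ora\phi=0$, shows $w\le 1$ everywhere (at an interior maximum the first term is $\ge0$, forcing $w(w-1)\le0$). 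Because $\psi_j=e^{h}\phi_j$ with $\phi_j$ bounded, the whole statement reduces to an a priori bound $h(z)\le d\log|z|+C$ for $|z|$ large: an entire function with $|\psi_j(z)|\le C'|z|^{d}$ is a polynomial of degree $\le d$ by the Cauchy estimates.

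The analytic heart is this logarithmic bound on $h$, where $\Delta h=\tfrac12(1-w)$. Writing $\sigma:=-\tfrac12\log w\ge 0$, the Kazdan--Warner equation (\ref{eqn74}) gives, away from the base points, $\Delta\sigma=\tfrac12(1-e^{-2\sigma})-2\kappa$, where $\kappa\ge 0$ is the density of the pullback by $[\ora\phi]=[\ora\psi]$ of the Fubini--Study form on $\mb P^{N-1}$. Using $1-e^{-2\sigma}\ge \sigma(1-\sigma)$ one gets $\Delta\sigma\ge\tfrac14\sigma-2\kappa$ near infinity, which is the $\mb C^N$-analogue of the Jaffe--Taubes differential inequality. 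Since $[\ora\phi]$ is a finite-energy holomorphic map $\mb C\to\mb P^{N-1}$, removal of singularity extends it to a rational map, so $\kappa(z)=O(|z|^{-4})$; comparison of $\sigma$ with a supersolution of $\Delta-\tfrac14$ of the form $A|z|^{-4}$ then yields $\rho:=1-w=O(|z|^{-4})$, in particular $\rho\in L^1(\mb C)$.

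With $\rho\in L^1$ I would represent the solution of $\Delta h=\tfrac12\rho$ by its Newtonian potential: as $|z|\to\infty$, $h(z)=d\log|z|+O(1)$ plus a function harmonic on all of $\mb C$, where $d:=\tfrac1{4\pi}\int_{\mb C}\rho\ge0$. A nonconstant harmonic term would have nondecaying gradient and hence force $\int_{\mb C}\sum_j|d_A\phi_j|^2=\infty$, contradicting finite energy; so it is constant and $h\le d\log|z|+C$. By the reduction above every $\psi_j$ is a polynomial of degree $\le d$. Finally, from $\sum_j|\psi_j|^2=e^{2h}w\asymp|z|^{2d}$ we read off $\max_j\deg\psi_j=d$, and for an index $j$ achieving the maximum, $\psi_j(z)\sim a_jz^{d}$ gives $\phi_j(re^{\sqrt{-1}\theta})\sim a_j e^{\sqrt{-1}d\theta}$; hence the normalized boundary loop of the extension on $S^2$ (Proposition \ref{prop39}) winds $d$ times, and by definition this winding number is ${\rm ind}_\mu(A,\ora\phi)$. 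Therefore ${\rm ind}_\mu(A,\ora\phi)=d=\max_j\deg\psi_j$.

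The main obstacle is precisely the two-sided control of $h$ in the middle two steps. The delicate point for $N\ge 2$ is the integrability of $\rho$: unlike Taubes' case $N=1$, the phase of $\ora\phi$ genuinely varies at infinity (this is exactly the holomorphic map to $\mb P^{N-1}$), so the Fubini--Study forcing term $\kappa$ must first be controlled through the removable-singularity/rationality of $[\ora\phi]$ before the differential inequality delivers the $|z|^{-4}$ decay. Excluding the harmonic part of $h$ via the finiteness of $\|d_A\ora\phi\|_{L^2}$, and matching the normalization of the winding number with Ziltener's definition of ${\rm ind}_\mu$, are the remaining points I would need to check carefully.
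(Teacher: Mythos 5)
Your strategy (a Jaffe--Taubes style differential inequality for $\sigma=-\tfrac12\log w$, decay of $\rho=1-w$, Newtonian potential representation of $h$, Cauchy estimates, then reading off the winding number) is a genuinely different and mostly sound route: the paper instead imports the asymptotics along rays from Ziltener's extension result (Proposition \ref{prop39}), i.e.\ equation (\ref{eqn75}), and concludes in a few lines that $\infty$ is not an essential singularity of $\psi_j$. But your argument has one step that is actually false: the claim that a nonconstant harmonic summand of $h-h_0$ would force $\int_{\mb C}\sum_j|d_A\phi_j|^2=\infty$. Write the harmonic part as ${\rm Re}\, F$ with $F$ entire. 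The substitution $h\mapsto h+{\rm Re}\, F$, $\psi_j\mapsto e^F\psi_j$ changes $\left( A,\ora\phi \right)$ only by the unitary gauge transformation $e^{-\sqrt{-1}\,{\rm Im} F}$: indeed $d-\partial(h+{\rm Re}\, F)+\ov\partial(h+{\rm Re}\, F)=A-\sqrt{-1}\,d({\rm Im}\, F)$ and $e^{-(h+{\rm Re}\, F)}e^F\psi_j=e^{\sqrt{-1}\,{\rm Im}\, F}\phi_j$. Since $F_A$, $\mu(u)$, $|d_A\phi_j|$ and hence the Yang--Mills--Higgs energy are gauge invariant, a configuration whose $h$ contains a wild harmonic part has exactly the same finite energy as the one with $h=h_0+{\rm const}$; no contradiction can be extracted from finiteness of energy. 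Your heuristic that the gradient of the harmonic term blows up the energy tacitly holds the $\psi_j$ fixed as polynomials while perturbing $h$, i.e.\ it presupposes the polynomiality you are trying to prove, so the step is circular as well as incorrect.

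The gap is repairable precisely because the harmonic ambiguity is the residual gauge freedom left over from the first lemma of Section \ref{section7} (the complex gauge transformation trivializing $A$ is unique only up to $e^f$ with $f$ entire). The correct move is: choose $F$ entire with ${\rm Re}\, F=h-h_0$ and replace $(h,\psi_j)$ by $(h_0,e^{-F}\psi_j)$, which is a unitary gauge change of $\left( A,\ora\phi \right)$. After this normalization your potential-theoretic bound $h_0=d\log|z|+O(1)$ with $d=\tfrac1{4\pi}\int_{\mb C}\rho$ applies, Cauchy estimates give polynomiality of degree $\le d$, the two-sided asymptotics force $\max_j\deg\psi_j=d$, and your winding-number computation identifies $d$ with ${\rm ind}_\mu(A,\ora\phi)$, which is gauge invariant, so the conclusion is unaffected by the normalization. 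With that one correction your proof is complete; note also that the paper's own proof quietly faces the same subtlety (it applies (\ref{eqn75}) in the fixed $h$-gauge), so making the residual gauge freedom explicit is arguably a virtue of your approach, at the cost of substantially more analysis than the paper's appeal to Proposition \ref{prop39}.
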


\begin{proof}
By Proposition \ref{prop39} there exists $g: S^1 \to U(1)$ such that for each $j$ and all $\theta \in S^1$, 
\begin{align}\label{eqn75}
\lim_{r\to +\infty} g(e^{-i \theta}) e^{- h ( r e^{i \theta})} \psi_j(r e^{i \theta}) \to a_j
\end{align}
and $\sum_{j=1}^N  |a_j|^2 = 1$. The Maslov index of the vortex is by definition the degree of $g$. 

Then for each $j$ with $a_j \neq 0$, (\ref{eqn75}) implies that $\infty$ is not an essential singularity of $\psi_j$. Hence $\psi_j$ is a polynomial. Then it is easy to see that $a_j \neq 0 \Longrightarrow {\rm deg} \psi_j = d$. Moreover, if $a_l = 0$ and $a_j \neq 0$, then
\begin{align}
\lim_{r\to \infty} {\psi_l (r e^{i \theta}) \over \psi_{j} ( r e^{i \theta}) } = {a_l \over a_j } = 0
\end{align}
which implies that $\psi_l$ is a polynomial with degree strictly less than $d$. 
\end{proof}

Before we proceed, we consider the uniqueness of the solutions to the Kazdan-Warner equation (\ref{eqn74}). For given $N$ polynomials $\left\{ \psi_j\right\}_{1\leq j \leq N}$, suppose $\sum_{j=1}^N \left|\psi_j(z) \right|^2$ are assymptotic to $c^2 |z|^{2d}$ as $|z|\to \infty$ for some $c>0$. 

\begin{lemma}\label{lemma73}
If $h_1, h_2: {\mb C}\to {\mb R}$ are both smooth and solve equation (\ref{eqn74}) on ${\mb C}$ such that $$\lim_{|z|\to \infty} \left( e^{-2h_i(z)} \sum_{j=1}^N	 \left| \psi_j(z) \right|^2 - 1\right) |z|^{4-\epsilon} = 0,\ i=1,2,$$
then $h_1 = h_2$.
\end{lemma}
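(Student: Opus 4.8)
The plan is to reduce the statement to a comparison argument for the difference $w := h_1 - h_2$ via the maximum principle. Write $G(z) := \sum_{j=1}^N |\psi_j(z)|^2 \geq 0$, so that both equations read $\Delta h_i = -\tfrac12(e^{-2h_i}G - 1)$. Subtracting and using $e^{-2h_1} = e^{-2h_2}e^{-2w}$ gives
\begin{align}
\Delta w = -\frac{1}{2}\left( e^{-2h_1} - e^{-2h_2}\right) G = \frac{1}{2}\,e^{-2h_2}\left( 1 - e^{-2w}\right) G.
\end{align}
The first step is to linearize the nonlinearity in $w$: setting $\varphi(w) := (1-e^{-2w})/(2w)$ for $w\neq 0$ and $\varphi(0):=1$, which is smooth and strictly positive on $\mb{R}$, the identity $1-e^{-2w} = 2w\,\varphi(w)$ turns the equation into
\begin{align}
\Delta w = V(z)\, w, \qquad V(z) := e^{-2h_2(z)}\,\varphi(w(z))\, G(z) \geq 0.
\end{align}
Thus $w$ satisfies a linear elliptic equation $\Delta w - Vw = 0$ with nonnegative potential, i.e. with nonpositive zeroth-order coefficient $-V$.

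The second step is to control $w$ at infinity. By hypothesis $\bigl(e^{-2h_i}G - 1\bigr)|z|^{4-\epsilon}\to 0$, and since $|z|^{4-\epsilon}\to\infty$ this forces the bracket itself to satisfy $e^{-2h_i}G - 1 \to 0$ for $i=1,2$ as $|z|\to\infty$. Because $G\sim c^2|z|^{2d}$ with $c>0$ is nonzero for $|z|$ large, I may divide the two limits to obtain
\begin{align}
e^{-2w} = \frac{e^{-2h_1}G}{e^{-2h_2}G} \longrightarrow \frac{1}{1} = 1,
\end{align}
so that $w\to 0$ as $|z|\to\infty$. In particular $\sup_{\mb{C}} w \geq 0$ and $\inf_{\mb{C}} w \leq 0$.

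The final step is the maximum principle. Suppose for contradiction $M := \sup_{\mb{C}} w > 0$. Since $w\to 0$ at infinity, there is $R$ with $w < M/2$ on $\{|z|\geq R\}$, so $w$ attains the value $M$ at an interior point of $B_R$. Applying Hopf's strong maximum principle to the subsolution $w$ of $\Delta - V$ (with $-V\leq 0$) on the connected domain $\mb{C}$, a nonnegative interior maximum forces $w$ to be constant, hence $w\equiv M>0$, contradicting $w\to 0$. Interchanging the roles of $h_1$ and $h_2$ applies the same argument to $-w$ and rules out $\inf_{\mb{C}} w < 0$. Therefore $\sup w = \inf w = 0$, i.e. $w\equiv 0$ and $h_1 = h_2$.

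I expect the main subtlety to be the degeneracy of $V$ at the common zeros of the $\psi_j$, where $G$ and hence $V$ vanish. At such isolated points the naive sign argument ``a positive interior maximum forces $\Delta w>0$'' fails, since there $\Delta w = Vw = 0$. This is precisely why I invoke the \emph{strong} maximum principle rather than the weak one: it requires only $V\geq 0$ together with connectivity of the domain, and so tolerates the finitely many degenerate points coming from the base locus of $\ora{\psi}$.
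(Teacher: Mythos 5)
Your proof is correct, but it takes a genuinely different route from the paper's. The paper argues integrally rather than pointwise: subtracting the two equations and taking the $L^2$-pairing with $h_1-h_2$, integration by parts gives $0 = \| d(h_1-h_2)\|_{L^2}^2 + \int_{\mb C}(h_1-h_2)\left( e^{-2h_2}-e^{-2h_1}\right)\sum_j|\psi_j|^2$, and since $x\mapsto e^{-2x}$ is decreasing both terms are nonnegative, forcing $h_1-h_2$ to be constant and to vanish wherever some $\psi_j\neq 0$. That argument is shorter, but it silently uses the decay hypothesis to ensure $d(h_1-h_2)\in L^2$, convergence of the integral, and vanishing of the boundary term $\int_{\partial B_R}(h_1-h_2)\,\partial_r(h_1-h_2)$ as $R\to\infty$ --- justifications the paper does not spell out (they do follow from the $|z|^{4-\epsilon}$ rate together with interior elliptic estimates). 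Your maximum-principle argument trades all of this for local analysis: the linearization $1-e^{-2w}=2w\varphi(w)$ with $\varphi>0$ plays exactly the role that monotonicity of $e^{-2x}$ plays in the paper's sign argument, and the only global input you need is the qualitative statement $w\to 0$ at infinity, not the quantitative rate; in that sense your proof is more elementary and consumes a weaker hypothesis. Two remarks: you do invoke the standing assumption $\sum_j|\psi_j|^2\sim c^2|z|^{2d}$ with $c>0$ (stated just before the lemma) in order to divide the two limits, which in particular excludes the degenerate case where all $\psi_j\equiv 0$ --- precisely the exception the paper's conclusion has to carve out; and your closing observation is exactly right that the strong (Hopf) maximum principle, rather than the naive second-derivative test, is what handles the common zeros of the $\psi_j$, where the potential $V$ degenerates.
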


\begin{proof}
We have 
\begin{align}
\Delta (h_1 - h_2) +   \left( e^{-2h_1} - e^{-2 h_2} \right) \sum_{j=1}^N \left| \psi_j\right|^2 =0.
\end{align}
Then the $L^2$-pairing with $h_1-h_2$ gives
\begin{align}
0 = \left| d(h_1- h_2) \right|_{L^2}^2 +  \int_{\mb C}  (h_1 - h_2) \left( e^{-2h_2} - e^{-2 h_1} \right)   \sum_{j =1}^N \left| \psi_j\right|^2 \geq 0.
\end{align}
So this is true only if $\psi_j \equiv 0$ or $h_1 = h_2$.
\end{proof}

\subsection{Construction of arbitrary vortices}

We have seen that all finite energy affine vortices are obtained from the solution to (\ref{eqn74}) for $\psi_j$ polynomials, and the solution, if exists, is unique. Now with our understanding of the adiabatic limit of equation (\ref{vortexeqn}) and the bubbling off phenomenon at the base locus (in the proof of Lemma \ref{lemma64}), it is rather easy to construct solutions to (\ref{eqn74}) with any given set of polynomials.

Indeed, suppose we are given $N$ polynomials $\psi_1, \ldots, \psi_N$, with $\psi_i= a_i(z- z_i^1)\cdots (z- z_i^{d_i})$. It is trivial to add zero polynomials hence we assume that $a_i \neq 0$. Let $d:= \max_{1\leq i \leq N} d_i$.

Now, consider the degree $d$ line bundle ${\mc O}(d) \to {\mb P}^1$. Take the sequence $\lambda_k = k$. Consider the sequence of sections of ${\mc O}(d)$, where are sequences of degree $d$ polynomials
\begin{align}
\varphi_i^{(k)}(z) =  k^{d_i - d}  a_i(1-z)^{d- d_i} \prod_{j=1}^{d_i} \left( z-  {z_i^j\over k} \right). 
\end{align}
Then for each $k$, $\left( {\mc O}(d); \varphi_1^{(k)}, \ldots, \varphi_N^{(k)}\right) $ is a rank 1 stable holomorphic $N$-pair over ${\mb P}^1$. By the Hitchin-Kobayashi correspondence, there exists a metric $H^{(k)}$ on ${\mc O}(d)$ which solves the vortex equation. Now the origin is a base point of the limit $N$-pairs $\left( {\mc O}(d); \phi_1, \ldots, \phi_N \right)$ with $\phi_i(z) = a_i z^d$ for $d_i = d$ and $\phi_i(z) = 0$ for $d_i < d$. As in the proof of Lemma \ref{lemma64}, we take $p= p_k = 0$ and $r_k:= \lambda_k^{-{1\over 2}} = k^{-{1\over 2}}$. It is easy to see that the sequence satisfies the criterion $\lim_{k \to \infty} {T_k \over t_k} < \infty$ of Lemma \ref{lemma64}. Hence by zooming in with a factor $\lambda_k = k$, we see a nontrivial affine vortex of Maslov index $d$ bubbles off. Equivalently, we constructed the unique solution to the Kazdan-Warner equation (\ref{eqn74}) with the given polynomials $\psi_j$ on ${\mb C}$.

\subsection{Identify the moduli space}

Denote
\begin{align}
\wt{N}_d:= \left\{ \left( a_{jl} \right)_{1\leq j \leq N,\ 0 \leq l \leq d} \in {\mb C}^{(d+1)N}\ |\ ( a_{1d}, \ldots, a_{Nd} ) \neq (0, \ldots, 0)\right\}.
\end{align}
This corresponds to the space of $N$ polynomials as the input of (\ref{eqn74}). ${\mb C}^*$ acts on $\wt{N}_d$ freely and denote $N_d:= \wt{N}_d/ {\mb C}^*$. We will use ${\bf x}$ to denote a general point in $N_d$. 

\begin{cor}\label{cor63}
There is an orientation preserving homeomorphism \begin{align}
\Phi_{HK}^d: N_d \to  {\mc M}_{1, 1}^{\mb A} \left({\mb C}^N ,d \right). 
\end{align}
And for each $(A, u) \in \wt{\mc M}_{1, 1}^{\mb A}\left({\mb C}^N, d\right)$, the linearization $D_{A, u}$ defined in (\ref{eqn39}) is surjective.
\end{cor}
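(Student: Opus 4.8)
The plan is to write down $\Phi_{HK}^d$ explicitly, read off bijectivity from the classification already obtained, and then deduce the homeomorphism, the orientation statement and the surjectivity of $D_{A,u}$ from a single analytic fact: the linearised equation is unobstructed. Given $[\ora{\psi}] = [\psi_1, \ldots, \psi_N] \in N_d$, I represent it by polynomials of maximal degree $d$, let $h$ be the unique solution of the Kazdan--Warner equation (\ref{eqn74}) (existence from the construction of the preceding subsection, uniqueness from Lemma \ref{lemma73}), and set
\begin{align}
\Phi_{HK}^d\left( [\ora{\psi}] \right) := \left( d - \partial h + \ov\partial h,\ e^{-h}\ora{\psi};\ z_0 = 0 \right).
\end{align}
This descends to $N_d$: rescaling $\ora{\psi}$ by $c \in {\mb C}^\ast$ replaces $h$ by $h + \log|c|$, leaves the connection unchanged, and multiplies $e^{-h}\ora{\psi}$ by the constant $c/|c| \in U(1)$, a unitary gauge transformation, so the isomorphism class is unchanged. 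Bijectivity is exactly the content of the two classification lemmas above: every finite energy vortex is gauge equivalent to one of this form with the $\psi_j$ polynomials of maximal degree $d$, the only residual holomorphic gauge freedom preserving polynomiality is multiplication by a nonzero constant, and after translating the interior marked point to the origin the remaining isomorphisms are constant gauge transformations; hence two such vortices are isomorphic if and only if they define the same point of $N_d$.

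The crux is the surjectivity of $D_{A,u}$. In the present integrable K\"ahler setting $D_{A,u}$ is complex linear modulo lower order terms, and I would identify its cokernel, through the Hitchin--Kobayashi correspondence (Theorem \ref{thm21}), with the obstruction group of the deformation complex of the holomorphic pair $\left( {\mc O}(d), \ora{\psi} \right)$ over ${\mb P}^1$,
\begin{align}
{\mc O} \xrightarrow{\ \cdot\, \ora{\psi}\ } {\mc O}(d)^{\oplus N},
\end{align}
namely $H^1\left( {\mb P}^1, {\mc O}(d) \right)^{\oplus N}$. Since $d \geq 1$, Riemann--Roch on ${\mb P}^1$ gives $H^1({\mb P}^1, {\mc O}(d)) = 0$, so the cokernel vanishes and $D_{A,u}$ is surjective; equivalently, the only analytic input is the invertibility of the linearised Kazdan--Warner operator $\Delta - V$ with $V = e^{-2h}\sum_j |\psi_j|^2 > 0$, which follows from the maximum principle. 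Consequently $\wt{\mc M}_{1,1}^{\mb A}({\mb C}^N, d)$ is cut out transversally and ${\mc M}_{1,1}^{\mb A}({\mb C}^N, d)$ is a smooth manifold, whose dimension necessarily equals the Fredholm index of Subsection \ref{subsection31}, that is $\dim_{\mb R} N_d = 2N(d+1) - 2$.

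Invertibility of $\Delta - V$ also shows, via the implicit function theorem, that $h$ and hence $\Phi_{HK}^d$ depend smoothly on $\ora{\psi}$ in the topology of Definition \ref{defn37}; its differential is precisely the identification of $T_{[\ora{\psi}]} N_d$ with $\ker D_{A,u}$, hence an isomorphism. Thus $\Phi_{HK}^d$ is a smooth local diffeomorphism, and being a bijection it is a diffeomorphism, in particular a homeomorphism (one may alternatively obtain continuity of the inverse from Ziltener's compactness theorem, Theorem \ref{thm49}, together with the fact that convergence within the smooth stratum is $C^\infty_{loc}$ convergence of the holomorphic data). Finally, because $D_{A,u}$ is complex linear up to lower order, its determinant line carries a canonical complex orientation, which matches the complex orientation $N_d$ inherits as an open subset of ${\mb P}^{(d+1)N-1}$; therefore $\Phi_{HK}^d$ is orientation preserving.

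The decisive and most technical step is the cokernel computation: transporting $D_{A,u}$, defined on the weighted Sobolev spaces over the noncompact plane ${\mb C}$, to the algebraic deformation complex of a holomorphic pair over the compact ${\mb P}^1$. This requires matching the weighted analysis near infinity with the holomorphic extension of Proposition \ref{prop39}, and verifying that the analytic cokernel is exactly $H^1({\mb P}^1, {\mc O}(d))^{\oplus N}$; reconciling the analytic and the complex-geometric orientations is the accompanying subtlety.
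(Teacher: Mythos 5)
Your definition of $\Phi_{HK}^d$, the check that it descends to $N_d = \wt{N}_d/{\mb C}^*$, and the bijectivity-from-classification argument all coincide with the paper's. The gap is in the surjectivity of $D_{A,u}$, which is the real content of the corollary. You propose to identify the cokernel of $D_{A,u}$ with the hypercohomology obstruction $H^1\left( {\mb P}^1, {\mc O}(d) \right)^{\oplus N}$ of the algebraic deformation complex ${\mc O} \to {\mc O}(d)^{\oplus N}$, but you never carry out this identification; you yourself flag it as ``the decisive and most technical step.'' Consistency of the Fredholm index with the algebraic Euler characteristic is not a substitute: the operator acts on the specific weighted spaces $W_{p,\delta}$ with $\delta \in \left( 1 - {2\over p}, 2 - {2\over p} \right)$ (including the $L^\infty$ and $d\mu$-terms), and it is precisely the choice of weight that determines which algebraic object, if any, the analytic cokernel computes; with other weights the same $\ov\partial$-type operators have different kernels and cokernels. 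Moreover, your parenthetical claim that surjectivity is ``equivalently'' the invertibility of the linearised Kazdan--Warner operator $\Delta - V$ is a non sequitur: that operator governs only the metric direction (and yields the smooth dependence of $h$ on $\ora{\psi}$, which you correctly use for the homeomorphism and orientation statements), but it says nothing about the cokernel of the full coupled operator $D_{A,u}$ in the weighted Banach setting.

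For contrast, the paper never compactifies: it notes that the gauge-action linearization is injective and $D_{A,u}$ is Fredholm (closed range), so it suffices to show that the two principal parts, $\ov\partial_A$ on $W^{1,p}_\delta\left( {\mb C}^N \right)$ and $\ov\partial$ on $W^{1,p}_\delta\left( \Lambda^{1,0}{\mb C} \right)$, have dense range. Density is proved by duality plus a Liouville-type argument: an annihilator $g \in L^q_{-\delta}$ of the range satisfies $\ov\partial \left( e^{-h} g \right) = 0$ weakly, so $e^{-h}g$ is entire; since $e^{h} \sim |z|^d$ at infinity, a nonzero $g$ must grow at least like $|z|^d$, and membership in $L^q_{-\delta}$ would then force $\delta > d + 2 - {2\over p} \geq 2 - {2\over p}$, contradicting the choice of $\delta$. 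This stays entirely on the weighted spaces over ${\mb C}$ and is elementary, whereas your route would require first building the analytic-to-algebraic comparison near $\infty \in {\mb P}^1$ (via the extension of Proposition \ref{prop39}) before any vanishing theorem can be invoked. Until that comparison is supplied, your proof of surjectivity is incomplete.
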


\begin{proof} We define $\wt{\Phi}^d_{HK}: N_d \to \wt{\mc M}_{0, 1}^{\mb A}\left({\mb C}^N, d \right)$ to be the map which assigns to ${\bf x}= \left[ a_{jl} \right]_{1\leq j \leq N, 0\leq l \leq d}$ the affine vortex
$$\left( A = d -\partial h +\ov\partial h, u = e^{-h} \left( \psi_1, \ldots, \psi_N \right)  \right)$$
where $\psi_j(z) = \sum_{l=0}^d a_{jl} z^l$ and $h$ is the solution to (\ref{eqn74}) with the input $\psi_1, \ldots, \psi_N$. Then define $
\Phi_{HK}^d({\bf x})$ to be the equivalence class of $(1, 1)$-marked affine vortex $\left( \wt{\Phi}_{HK}^d({\bf x}), 0 \right)$, where $0\in {\mb C}$ is the interior marked point.

To show show that map is continuous, we need to show that the solution to the Kazdan-Warner equation depends continuously on the polynomials $\psi_1, \ldots, \psi_N$, which can be proved in a standard way. The orientation-preserving property then follows from a $C^1$-dependence on $\psi_1, \ldots, \psi_N$.

Now we show the surjectivity of the linearization map. First notice that the linearization of gauge tranformation is injective, because the image of $u$ is not in the fixed point set of the $S^1$-action on ${\mb C}^N$. Hence it suffices to prove that the map
\begin{align}
D_{A, u} \left( V, \alpha \right) = \left( \begin{array}{c} \ov\partial_A V + {\mc X}_\alpha^{0, 1}(u)\\
                                                            d\alpha +  d\mu (V) ds dt                  \end{array}\right)
\end{align}
is surjective, where $(V, \alpha) \in W_{p, \delta}$ which is defined in Subsection \ref{subsection31}. Note that the space $W^{1, p}_\delta\left( {\mb C}^N \right) := \left\{ V \in W^{1, p}_{loc}\left( {\mb C}^N \right) \ |\  \left| V \right|_{L_\delta^p} + \left| \nabla^A V \right|_{L_\delta^p} < \infty \right\}$ and $W^{1, p}_\delta \left( \Lambda^{1, 0}{\mb C}\right)$ are contained in $W_{p, \delta}$ by obvious inclusions. Hence it suffices to show that the two maps
\begin{align}\label{611}
\begin{array}{cccc}
\ov\partial_A :&  W^{1, p}_\delta \left( {\mb C}^N \right)   & \to  &  L_p^\delta \left( \Lambda^{0, 1}{\mb C} \otimes {\mb C}^N \right)\\
               &   	V & \mapsto & \ov\partial_A V
               \end{array}
\end{align}
\begin{align}\label{612}
\begin{array}{cccc}
\ov\partial: & W^{1, p}_\delta\left( \Lambda^{1,0}{\mb C} \right)&  \to &  L^p_\delta \left( \Lambda^2 {\mb C} \right)\\
             &    \alpha^{1, 0} & \mapsto & \ov\partial \alpha^{1, 0}
             \end{array}
\end{align}
both have dense range (they are not Fredholm).

To prove that (\ref{611}) has dense range, it suffices to prove for $N= 1$. Then the operator is
\begin{align}
\ov\partial_A f = \left( e^{- h} \ov\partial ( e^h f) \right). 
\end{align}
If its range is not dense, then there exists $g\in \left( L_\delta^p \right)^* = L_{-\delta}^q$ such that
\begin{align}
\int_{\mb C} g dz \wedge \ov\partial_A f = 0,\ \forall f \in C^\infty_c. 
\end{align}
This implies that $e^{-h} g$ is an entire function. But $e^h$ grows like $|z|^d$ for $d\geq 0$. Hence $g= 0$ or $g$ grows at least like $|z|^d$. For such a $g$ in the latter case to lie in $L_{-\delta}^q$, we should have
\begin{align}
-2 > q ( d- \delta) \Longrightarrow \delta > d + 2 - {2\over p} \geq 2 - {2\over p}
\end{align}
which contradicts with our choice of $p$ and $\delta$. The operator (\ref{612}) is equivalent to the map $f\mapsto {\partial f\over \partial \ov{z}}$ so it is essentially the same as the case of (\ref{611}) to deduce that it has dense range.
\end{proof}

\begin{rem}
In particular, the transversality of the linearization $D_{A, u}$ implies that the moduli space $N_d$ is a correct one to define the quantum Kirwan map. The remaining is to show that the evaluation maps indeed give a pseudo-cycle. This can be seen by giving suitable compactifications of the moduli space on which the evaluation maps extend continuously. In the last two sections, we will first give the stable map compactification of the lowest nontrivial moduli, and then construct a compactification (which we call Uhlenbeck compactification) for arbitrary degrees which allows us to extend the evaluation maps.
\end{rem}

\subsection{Stable map compactification of ${\mc M}_{1, 1}^{\mb A}\left( {\mb C}^N, 1 \right)$}

In this subsection we identify the compactification $\ov{\mc M}^{\mb A}_{1,1}\left( {\mb C}^N, 1 \right)$ defined in Subsection \ref{subsection42}, which is the smallest nontrivial case. Depending on the type of degeneration, $\ov{\mc M}_{1, 1}^{\mb A}\left({\mb C}^N, 1 \right)$ is stratified as described in Figure \ref{figure1}: the open stratum is ${\mc M}_{1, 1}^{\mb A}\left({\mb C}^N, 1 \right)$; two lower strata of depth 1 are: 1) the stratum ${\mc T}_1$ corresponding to the moduli of degree 1 holomorphic spheres in ${\mb P}^{N-1}$ with one marked point and a ghost vortex attached to the sphere, which is the same as ${\mc M}_2\left( {\mb P}^{N-1}, 1 \right)$ and 2) the stratum ${\mc T}_2$ corresponding to the moduli of $(0, 1)$-marked degree 1 affine vortices with two ghost components attached at $\infty$; the lowest stratum ${\mc S}$ is the moduli of holomorphic spheres with two ghost components attached at $\infty$. Note that, points in each stratum only depend on the isomorphism class of their nontrivial components and the other ghost components are attached to the nontrivial components in a unique way.

\begin{figure}[htbp]
\centering
\includegraphics[scale=0.60]{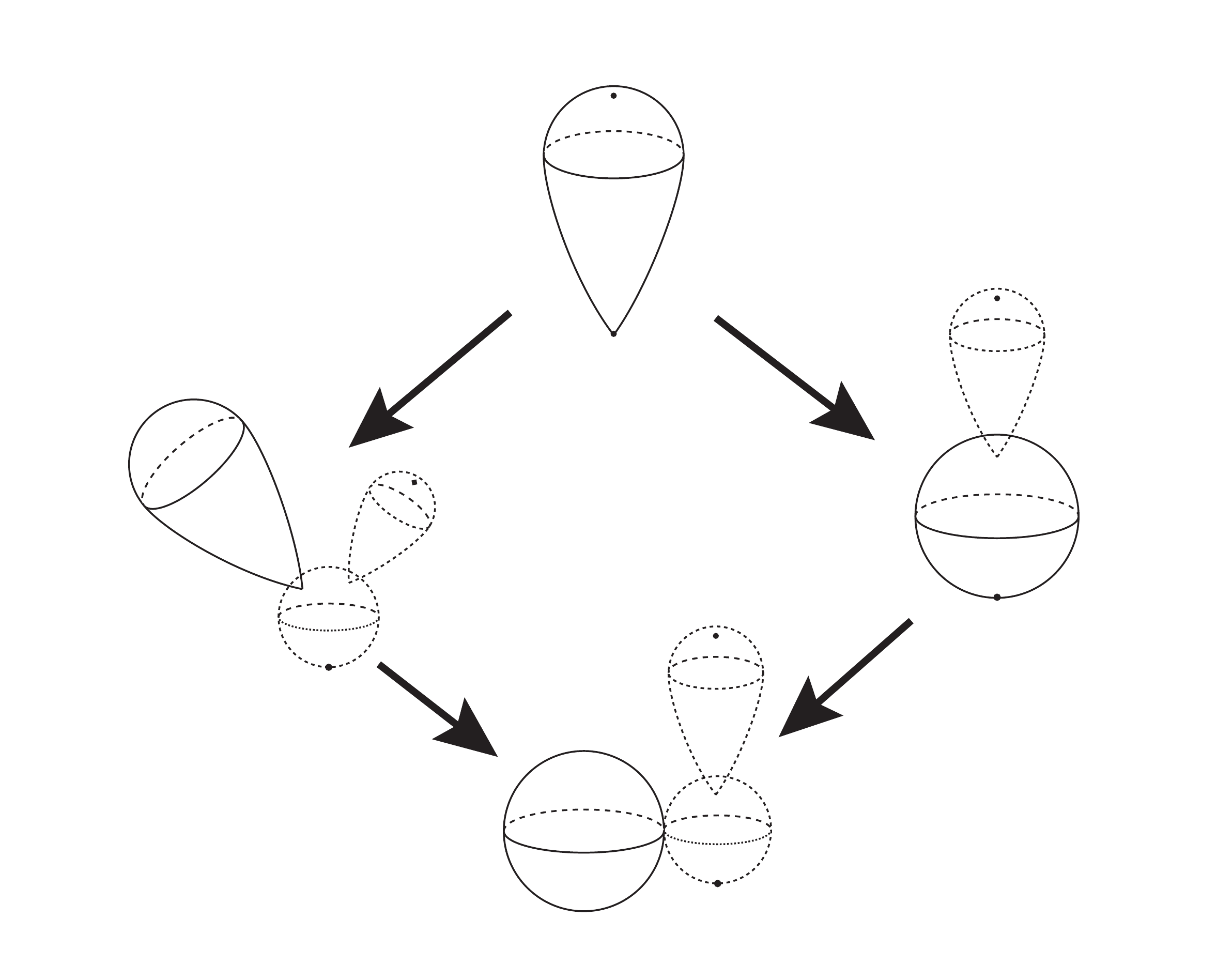}
\caption{The stratification of $\ov{\mc M}_{1, 1}^{\mb A} \left( {\mb C}^N, 1\right)$.}
\label{figure1}
\end{figure}

We need to find the correct compactification of $N_1$. Note that $N_1 = {\mc O}(1)^{\oplus N}$ where ${\mc O}(1) \to {\mb P}^{N-1}$ is the degree 1 line bundle. For ${\bf a}:= (a_1, \ldots, a_N) \neq 0$, we use the homogeneous coordinates $[a_1, \ldots, a_N]$ to denote a point in the base ${\mb P}^{N-1}$, ${\bf a}$ to denote a basis vector of the fibre of ${\mc O}(-1)$ over the point $[a_1, \ldots, a_N]$, and ${\bf a}^*$ the dual basis of the fibre of ${\mc O}(1)$ over $[a_1, \ldots, a_N]$. Then a point in $N_1$	will be denoted by $(b_1, \ldots, b_N) {\bf a}^*$. 

There is a natural compactification of ${\mc O}(1)^{\oplus N}$ by the projective bundle
\begin{align}
{\mb P}({\mc E}) := {\mb P}\left( {\mc O}(1)^{\oplus N} \oplus {\mb C} \right)
\end{align}
where ${\mc O}(1)^{\oplus N}$ embeds into ${\mb P}({\mc E})$ as
\begin{align}\label{59}	
\left( b_1 {\bf a}^*, \ldots, b_N {\bf a}^* \right) \mapsto \left[ b_1 {\bf a}^*, \ldots, b_N {\bf a}^*, 1 \right].
\end{align}

We see that ${\mb P}({\mc E}) \setminus N_1 = {\mb P}^{N-1} \times {\mb P}^{N-1}$. Remember that, the moduli space of genus zero, degree 1, 2-marked stable map to ${\mb P}^{N-1}$ is isomorphic to the blow up $Bl_\Delta\left( {\mb P}^{N-1}\times {\mb P}^{N-1}\right)$ along the diagonal $\Delta \subset {\mb P}^{N-1}\times {\mb P}^{N-1}$. Regard $\Delta \subset {\mb P}({\mc E})$ as a codimension $N$ subvariety. We denote by 
\begin{align}
\ov{N}_1:= Bl_\Delta{\mb P}({\mc E})
\end{align}
the blown-up. Denote the exceptional divisor by $E\simeq {\mb P}\left(N_\Delta \oplus {\mb C} \right)$, where $N_\Delta$ is the normal bundle of $\Delta$ in ${\mb P}^{N-1} \times {\mb P}^{N-1}$. Now the space $\ov{N}_1$ can be stratified: the top stratum is just $N_1$; there are two strata of depth one, which are $T_1:= {\mb P}^{N-1} \times {\mb P}^{N-1} \setminus \Delta$ and $T_2:= E\setminus {\mb P}(N_\Delta)$; there is a single lowest stratum of depth two, which is $S := {\mb P}\left( N_\Delta \right)$. For each point ${\bf x}= \left( [a_1, \ldots, a_N], [a_1, \ldots, a_N] \right) \in \Delta$, the fibre of $T_2$ over ${\bf x}$ has the coordinates $\left[  \left( v_1 {\bf a}^*, \ldots, v_N {\bf a}^*\right), \left( - v_1 {\bf a}^*, \ldots, - v_N {\bf a}^* \right), 1                     \right]$, where $(v_1, \ldots, v_N) \bot (a_1, \ldots, a_N)$ in ${\mb C}^N$ and hence
\begin{align}
 \left( v_1 {\bf a}^*, \ldots, v_N {\bf a}^*\right)\in {\rm Hom} \left( {\mc O}(1)|_{[{\bf a}]}, {\bf a}^{\bot} \right) = T_{[{\bf a}]} {\mb P}^{N-1}.
\end{align}

Now we prove that the map $\Phi_{HK}^1$ defined in Corollary \ref{cor63} extends to 
\begin{align}
\ov{\Phi}_{HK}^1: \ov{N}_1 \to \ov{\mc M}^{\mb A}_{1, 1}\left( {\mb C}^N, 1 \right)
\end{align}
which is a homeomorphism with respect to the Gromov convergence defined in Subsection \ref{subsection43} and respects the stratifications of the domain and the target. 

We first define the extension as follows.
\begin{enumerate}
\item $\ov{\Phi}_{HK}^1: T_1 \to {\mc T}_1$ assigns to $[a_1, \ldots, a_N, b_1, \ldots, b_N]$ the $(1, 1)$-marked stable affine vortex in ${\mc T}_1$ whose nontrivial component is the equivalent to the holomorphic sphere $z \mapsto [ a_1 z + b_1, \ldots, a_N z + b_N]$.

\item $\ov{\Phi}_{HK}^1$ restricted to $T_2= E\setminus {\mb P}(N_\Delta)$ assigns each $[(v_1, \ldots, v_N) {\bf a}^*, -(v_1, \ldots, v_N) {\bf a}^*, 1]$ to the equivalence class of $(1, 1)$-marked stable affine vortices in ${\mc T}_2$ whose nontrivial component is equivalent to the vortex $\wt{\Phi}_{HK}^1\left([a_1, \ldots, a_N, - v_1, \ldots, - v_N ] \right)$.
\end{enumerate}

Now we have to prove
\begin{lemma} 
$\ov{\Phi}_{HK}^1$ is continuous.
\end{lemma}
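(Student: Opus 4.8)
The plan is to prove continuity sequentially and to lean on Ziltener's compactness theorem (Theorem \ref{thm49}) to supply the limits, so that only their identification remains. Continuity of $\ov{\Phi}_{HK}^1$ within each stratum is immediate: on $N_1$ it is the homeomorphism of Corollary \ref{cor63}, while on $T_1$, $T_2$ and the lowest stratum ${\mc S}$ the nontrivial component is prescribed by explicit algebraic data together with the solution of the Kazdan-Warner equation (\ref{eqn74}), which depends continuously on its polynomial input by the argument of Corollary \ref{cor63}. Hence it suffices to treat a sequence ${\bf x}^\nu\to{\bf x}^\infty$ in which ${\bf x}^\nu$ lies in a higher stratum and ${\bf x}^\infty$ in a lower one, i.e.\ the crossings $N_1\to T_1$, $N_1\to T_2$, $T_1\to{\mc S}$, $T_2\to{\mc S}$ and $N_1\to{\mc S}$. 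For such a sequence I represent ${\bf x}^\nu\in N_1$ by polynomials $\psi_j^\nu(z)=a_j^\nu z+b_j^\nu$ and form the vortices $(A^\nu,u^\nu)$ of Corollary \ref{cor63}, with $u^\nu=e^{-h^\nu}(\psi_1^\nu,\dots,\psi_N^\nu)$ and total energy $E(A^\nu,u^\nu)=2\pi$ by (\ref{eqn66}). By Theorem \ref{thm49}, after passing to a subsequence these converge in the sense of Definition \ref{defn38} to some stable affine vortex $\wt{\bf W}^\infty$ with $E(\wt{\bf W}^\infty)=2\pi$. Since a sequence converges once every subsequence has a further subsequence converging to the same limit, it is enough to show that every such $\wt{\bf W}^\infty$ equals $\ov{\Phi}_{HK}^1({\bf x}^\infty)$. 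The energy bound then does the bookkeeping for me: once I exhibit a single nontrivial component carrying the full $2\pi$, all remaining components are forced to be ghosts.

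\textbf{The crossing $N_1\to T_1$.} Here $[{\bf a}^\nu]\to[{\bf a}]$, $[{\bf b}^\nu]\to[{\bf b}]$ with $[{\bf a}]\neq[{\bf b}]$, and in the chart (\ref{59}) leaving $N_1$ forces $|{\bf b}^\nu|/|{\bf a}^\nu|\to\infty$. Normalizing the ${\mb C}^*$-action (which acts by constant gauge transformations, as in the proof of Corollary \ref{cor63}) so that $|{\bf a}^\nu|=1$, one has $\sum_j|\psi_j^\nu(z)|^2\to\infty$ on compact sets, so (\ref{eqn74}) forces $e^{-2h^\nu}\sum_j|\psi_j^\nu|^2\to1$, whence $h^\nu-\log|{\bf b}^\nu|\to0$ in $C^0_{loc}$ and $\mu\circ u^\nu\to0$, $\pi\circ u^\nu\to[{\bf b}]$ uniformly on compacta; thus the root is the trivial vortex mapping to $[{\bf b}]$. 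Rescaling by $\phi^\nu(z)=|{\bf b}^\nu|\,z$ gives $\pi\circ u^\nu\circ\phi^\nu=[\,a_j^\nu z+b_j^\nu/|{\bf b}^\nu|\,]\to[a_1z+b_1,\dots,a_Nz+b_N]$ in $C^\infty_{loc}$, the degree one sphere prescribed by $\ov{\Phi}_{HK}^1$ on $T_1$, with the marked point $z_0=0$ tracked to $0$ on this component. As the sphere is of degree one it carries the full energy $2\pi$, the root is forced to be a ghost, and $\wt{\bf W}^\infty$ is the stable vortex of ${\mc T}_1$, verifying conditions (1)--(6) of Definition \ref{defn37}.

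\textbf{The crossing $N_1\to T_2$ and the descent to ${\mc S}$.} Approaching $E$ transversally to the boundary $\{t=0\}$ means, after extracting the blow-up data, that $a_j^\nu\to a_j$ and $b_j^\nu=c^\nu a_j-v_j+o(1)$ with $c^\nu\to\infty$ and ${\bf v}\perp{\bf a}$ the blow-up direction, so that to leading order $\psi_j^\nu(z)=\psi_j(z+c^\nu)$ with $\psi_j(w)=a_jw-v_j$. Translating by $\phi^\nu(z)=z-c^\nu$, the recentred vortices $(\phi^\nu)^*(A^\nu,u^\nu)$ converge in $C^\infty_{loc}$---the translated $h^\nu$ converging to the Kazdan-Warner solution of $\psi_j=a_jw-v_j$ by Lemma \ref{lemma41} and Lemma \ref{lemma73}---to the nontrivial affine vortex $\wt{\Phi}_{HK}^1([{\bf a},-{\bf v}])$ prescribed on $T_2$, while the marked point satisfies $(\phi^\nu)^{-1}(0)=c^\nu\to+\infty$, escaping to the infinity of the vortex and producing the ghost components of ${\mc T}_2$. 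Again this vortex is of degree one, so no energy is left for other nontrivial components. The three descents onto ${\mc S}$ ($T_1\to{\mc S}$, $T_2\to{\mc S}$, $N_1\to{\mc S}$) combine the two mechanisms---a simultaneous blow-up of the constant terms and a collapse onto the diagonal---and are handled by composing the rescaling of the first crossing with the translation of the second at compatible rates.

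The main obstacle is the uniform analytic control of the Kazdan-Warner solutions $h^\nu$ underlying every identification above: one must show that on each compact set the asymptotics of $h^\nu$ are dictated solely by the dominant part of $\sum_j|\psi_j^\nu|^2$, and that after the correct rescaling or translation $h^\nu$ converges---by the a priori convergence of Lemma \ref{lemma41} and the uniqueness of Lemma \ref{lemma73}---to the solution attached to the limit configuration. Energy conservation itself is handed to me by Theorem \ref{thm49}, which removes the usual worry about energy lost in the necks; what remains delicate is the bookkeeping of the interior marked point $z_0$ and of the induced ghost-tree structure, especially on the descent to ${\mc S}$ where both degenerations occur at once and the rescaling and translation must be matched at compatible rates. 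Once the limit components and the tree are pinned down, conditions (1)--(6) of Definitions \ref{defn37} and \ref{defn38} follow directly from the explicit reparametrizations.
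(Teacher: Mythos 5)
Your proposal is correct in substance, and on the crossings you actually work out it is essentially the paper's proof: the limits are identified by the same explicit reparametrizations (dilation by the size of the blowing-up constant term for $N_1\to T_1$, the translation $z\mapsto z-c^\nu$ for $N_1\to T_2$) together with control of the Kazdan--Warner solutions. The packaging differs, though. The paper never invokes Theorem \ref{thm49} in this proof; it verifies Definitions \ref{defn36}--\ref{defn38} directly for the candidate limit, which suffices because the energy identity $2\pi=2\pi$ holds trivially for that candidate. Your compactness-plus-subsequence wrapper is legitimate (it is exactly how the paper later proves continuity of the Uhlenbeck compactification map), but here it is redundant; moreover, identifying the subsequential limit supplied by Theorem \ref{thm49} with $\ov{\Phi}_{HK}^1({\bf x}_\infty)$ strictly requires uniqueness of limits in the sense of Definition \ref{defn38}, which the paper never states --- what actually carries your proof is your direct verification of Definition \ref{defn37}, same as the paper. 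In one respect you are more careful than the paper: you list the within-boundary crossings $T_1\to{\mc S}$ and $T_2\to{\mc S}$, which the paper silently omits (it only treats sequences lying in $N_1$). In the opposite respect you are less complete: the descents to ${\mc S}$ are only gestured at (``compose the rescaling with the translation at compatible rates''), whereas the paper carries out this computation, taking $g_i(w)=(d_i/w_i)w-c_iw_i^{-1}$ with ${\bf b}_i=c_i{\bf a}_i+{\bf y}_i$, ${\bf y}_i\perp{\bf a}_i$, $d_i=d_{{\mb P}^{N-1}}([{\bf a}_i],[{\bf b}_i])$, and obtaining the limit sphere $[a_1w+y_1,\ldots,a_Nw+y_N]$ with ${\bf y}=\lim d_i^{-1}{\bf y}_i$. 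The rate matters: dilating by $|w_i|^{-1}$ instead of $d_i/|w_i|$ after the translation yields a constant map, so ``compatible rates'' hides the one genuinely new computation of that case. Finally, two small slips: the trivial vortex carrying $z_0$ is a leaf of the tree, not the root (the root is the sphere component); and Lemma \ref{lemma41} concerns the adiabatic limit over a compact surface with $\lambda_k\to\infty$, so it is not the right citation for convergence of the translated Kazdan--Warner solutions on ${\mb C}$ --- continuous dependence of solutions of (\ref{eqn74}) on the polynomial data (as invoked in Corollary \ref{cor63}), combined with the uniqueness Lemma \ref{lemma73}, is what you need there.
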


\begin{proof} 
It is continuous on each stratum. Hence we take a sequence ${\bf x}_i \in N_1$ which converges to a point ${\bf x}_\infty \in \ov{N}_1 \setminus N_1$. We may write $ {\bf x}_i= \left[ \left( b_{1, i}, \ldots, b_{N, i}\right) {\bf a}_i^*, w_i\right]\in {\mb P}\left( {\mc O}(1)^{\oplus N} \oplus {\mb C}\right)$ with ${\bf a}_i$, ${\bf b}_i$ unit vectors, $w_i \in {\mb C}^*$, and $\lim_{i \to \infty} {\bf a}_i = {\bf a}, \lim_{i \to \infty} {\bf b}_i = {\bf b}$, $\lim_{i \to \infty} w_i = 0$. Let $d_i:= d_{{\mb P}^{N-1}}([ {\bf a}_i], [{\bf b}_i])$.

We first observe that
\begin{enumerate}
\item ${\bf x}_\infty \in T_1 \Longrightarrow [{\bf a}] \neq [{\bf b}]$ in ${\mb P}^{N-1}$;

\item ${\bf x}_\infty \in T_2 \Longrightarrow [{\bf a}] = [{\bf b}]$ and $\lim_{i \to \infty} |w_i|^{-1} d_i < \infty$;

\item ${\bf x}_\infty \in S \Longrightarrow [{\bf a}] = [{\bf b}]$ and $\lim_{i \to \infty} | w_i |^{-1} d_i = \infty$.
\end{enumerate}

\begin{enumerate}
\item In the first and the third case, we have
\begin{align}
\lim_{i \to \infty} \inf_{z \in {\mb C}}  \left| {\bf a}_i z + w_i^{-1} {\bf b}_i \right|^2 \geq \lim_{i \to \infty} |w_i|^{-2} d_i^2 = +\infty.
\end{align}
Then look at the Kazdan-Warner equation
\begin{align}
\Delta_0 h_i + {1\over 2} \left( e^{-2h_i}\sum_{j=1}^N \left| a_{j, i} z + \epsilon_i^{-1} b_{j, i} \right|^2 -1 \right) = 0.
\end{align}
We denote
\begin{align}
h_i'(z):= h_i(z) - {1\over 2} \log \left( \left| \ora{a}_i z +  \epsilon_i^{-1} \ora{b}_i \right|^2\right).
\end{align}
Then
\begin{align}
\Delta_0 h_i' + {1\over 2} ( e^{- 2 h_i'} -1 ) = - {1\over 2} \Delta_0 \log \left( \left| \ora{a}_i z + \epsilon_i^{-1} \ora{b}_i \right|^2\right).
\end{align}
Note that the right hand side converges to zero uniformly over ${\mb C}$, which implies that $h_i'$ converges to zero. 

\item In the first case, take the M\"obius transformation
\begin{align}
\begin{array}{cccc}
g_i: & S^2 & \to &  S^2\\
      & w & \mapsto & z= w_i^{-1} w.
\end{array}
\end{align}
We see the sequence of maps
\begin{align}
\pi_\nu\left( g_i^*( e^{-h_i} \phi_{j, i} )\right)(w) = e^{-h_i(\epsilon_i^{-1} w)} w_i^{-1}( a_{j, i} w + b_{j, i}).
\end{align}
converges on ${\mb C}\subset S^2$ to 
\begin{align}
w\mapsto \left[ a_1 w + b_1,  \ldots, a_N w + b_N          \right]\in {\mb P}^{N-1}
\end{align}
which extends to a nontrivial holomorphic map from ${\mb P}^1$ to ${\mb P}^{N-1}$ of degree 1. By Definition \ref{defn38}, this means that in the first case, the sequence $\Phi_{HK}^1({\bf x}_i)\in {\mc M}_{1,1}^{\mb A}\left( {\mb C}^N, 1 \right)$ converges to $\ov{\Phi}_{HK}^1({\bf x}_\infty)$. 

\item In the third case, for large $i$ we write ${\bf b}_i = c_i {\bf a}_i + {\bf y}_i$ with $c_i \in {\mb C}$ and ${\bf y}_i \bot {\bf a}_i$. Then by the condition ${\bf x}_\infty \in S$, we have $\lim_{i \to \infty} d_{{\mb P}^{N-1}}([{\bf a}_i], [{\bf b}_i])^{-1} {\bf y}_i$ exists. Then take the sequence of M\"obius transformations 
\begin{align}
z= g_i(w) = {d_i \over w_i} w - c_i w_i^{-1}.
\end{align}
We see
\begin{align}
\lim_{i \to \infty} \left[ g_i^* \phi_{1, i}, \ldots, g_i^* \phi_{N, i}    \right] = \lim_{i \to \infty} \left[ a_{1, i} d_i w + y_{1, i}, \ldots, a_{N, i} d_i w + y_{N, i}          \right] = [ a_1 w + y_1, \ldots, a_N w + y_N]
\end{align}
which is a degree one holomorphic sphere in ${\mb P}^{N-1}$. Adding proper ghost components, we see that this means the sequence $\Phi_{HK}^1({\bf x}_i)$ converges to $\ov{\Phi}_{HK}^1({\bf x}_\infty)$.

\item In the second case, write ${\bf b}_i = c_i {\bf a}_i + {\bf y}_i$ with $c_i \in {\mb C}$, ${\bf y}_i \bot {\bf a}_i$. Since ${\bf x}_i = \left[ ( b_{1,i}, \ldots, b_{N, i}) {\bf a}_i^*, w_i \right] \in {\mb P}\left( {\mc O}(1)^{\oplus N}\oplus {\mb C}\right)$ converge to ${\bf x}_\infty \in {\mb P}\left( N_\Delta \oplus {\mb C}\right) \setminus {\mb P}\left(N_\Delta \right)$, the limit 
$$ {\bf v}:= \lim_{i \to \infty} w_i^{-1} {\bf y}_i \in {\bf a}^{\bot}\subset {\mb C}^N$$
exists. Then consider the sequence of translations
\begin{align}
z= t_i(w) = w - c_i w_i^{-1}.
\end{align}
Then we see the sequence of polynomials
\begin{align}
t_i^* \phi_{j, i} = a_{j, i} ( w- c_i w_i^{-1}) + w_i^{-1} b_{j, i} = a_{j, i} w- w_i^{-1} y_{j, i}
\end{align}
converge to $a_j w - v_j$. By the continuous dependence of the solution to the Kazdan-Warner equation on the given $N$ polynomials, we see that $t_i^* \wt{\Phi}_{HK}^1( {\bf x}_i)$ converge uniformly on any compact subset to the affine vortex $\wt{\Phi}_{HK}^1( [ a_1, \ldots, a_N, v_1, \ldots, v_N])$. Hence $\Phi_{HK}^1({\bf x}_i)$ converge to $\ov{\Phi}_{HK}^1({\bf x}_\infty)$.
\end{enumerate}
\end{proof}

\subsection{The Uhlenbeck compactification of ${\mc M}_{1, 1}^{\mb A} \left({\mb C}^N, d \right)$ and the quantum Kirwan map}

We define the Uhlenbeck compactification to be a quotient space of the stable map compactification, by only remembering the sum of the degrees of the components of the stable map which doesn't contain the marked point $0$.

\begin{prop}
The Uhlenbeck compactification $\ov{\mc M}^{{\mb A},U}_{1, 1}\left( {\mb C}^N, d \right)$ is homeomorphic to ${\mb P}^{N(d+1)-1}$.
\end{prop}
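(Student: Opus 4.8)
The plan is to realize ${\mb P}^{N(d+1)-1}$ as the projectivization of the space of all $N$-tuples of polynomials of degree $\le d$, to match its degree stratification with that of the Uhlenbeck compactification, and then to extend the map $\Phi_{HK}^d$ of Corollary \ref{cor63} to a continuous bijection. A point of ${\mb P}^{N(d+1)-1}$ is a tuple $[\psi_1,\ldots,\psi_N]$ with $\psi_j(z)=\sum_{l=0}^d a_{jl}z^l$, taken up to an overall scalar, and the invariant $d':=\max_j \deg\psi_j$ stratifies it into locally closed pieces. The stratum $\{\max_j\deg\psi_j = d'\}$ is exactly $\wt N_{d'}/{\mb C}^* = N_{d'}$, so ${\mb P}^{N(d+1)-1}=\bigsqcup_{d'=0}^d N_{d'}$ as sets. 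On the other side, the Uhlenbeck datum of a stable affine vortex is the isomorphism class of its primary component together with the total degree of the remaining components; the primary component is a single (possibly ghost) vortex of some degree $d'$, i.e. a point of ${\mc M}_{1,1}^{\mb A}({\mb C}^N,d')\cong N_{d'}$ by Corollary \ref{cor63}, while the away-degree is forced to be $d-d'$ by conservation of total degree. Hence $\ov{\mc M}^{{\mb A},U}_{1,1}({\mb C}^N,d)=\bigsqcup_{d'=0}^d N_{d'}$ as well, and I define $\Psi$ to send $[\psi_1,\ldots,\psi_N]\in N_{d'}$ to the Uhlenbeck class of $\bigl(\Phi_{HK}^{d'}[\psi_1,\ldots,\psi_N],\,d-d'\bigr)$; it restricts to the homeomorphism $\Phi_{HK}^d$ on the open stratum $N_d$ and is a bijection.

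Since ${\mb P}^{N(d+1)-1}$ is compact and $\ov{\mc M}^{{\mb A},U}_{1,1}({\mb C}^N,d)$ is compact (being a quotient of the stable-map compactification $\ov{\mc M}_{1,1}^{\mb A}({\mb C}^N,d)$, which is compact by Theorem \ref{thm49}) and Hausdorff, it suffices to prove that $\Psi$ is continuous: a continuous bijection from a compact space onto a Hausdorff space is automatically a homeomorphism. Separatedness of the Uhlenbeck quotient I would verify directly from the bubbling description of Subsection \ref{subsection43}, using that the away-degree is an integer locally constant on strata, so the relation collapsing the away-bubbles to their total degree is closed.

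For continuity, take $[\psi^{(i)}]\to[\psi^\infty]$ in ${\mb P}^{N(d+1)-1}$ with $[\psi^\infty]\in N_{d'}$, and choose representatives converging in ${\mb C}^{(d+1)N}$, so that $\psi_j^{(i)}\to\psi_j^\infty$ uniformly on compact subsets of ${\mb C}$. Lifting each $\Psi([\psi^{(i)}])$ to a genuine degree-$d$ vortex, the compactness Theorem \ref{thm49} gives, along any subsequence, a further subsequence converging in the sense of Definition \ref{defn38} to a stable vortex $\wt{\bf W}^\infty$. I claim its Uhlenbeck class is $\Psi([\psi^\infty])$. First, on the root component carrying the marked point $0$ no rescaling is needed, and since $\psi_j^{(i)}\to\psi_j^\infty$ on compacta, the continuous dependence of the solution of the Kazdan--Warner equation \eqref{eqn74} on the polynomials (established in the proof of Corollary \ref{cor63}) gives $C^\infty_{loc}$-convergence of the primary component to $\Phi_{HK}^{d'}[\psi^\infty]$. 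Second, the total energy is the fixed topological quantity attached to a degree-$d$ vortex for every $i$ (cf. \eqref{eqn66} and the energy--homology pairing after Proposition \ref{prop39}), while the primary limit carries the energy of a degree-$d'$ vortex; hence the energy escaping the primary component corresponds to degree $d-d'$. Thus the Uhlenbeck class of $\wt{\bf W}^\infty$ equals $\bigl(\Phi_{HK}^{d'}[\psi^\infty],\,d-d'\bigr)=\Psi([\psi^\infty])$. As the primary limit is pinned down along the whole sequence and the target is Hausdorff, the full sequence $\Psi([\psi^{(i)}])$ converges to $\Psi([\psi^\infty])$, establishing continuity.

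The main obstacle is the analytic input in the first claim: controlling the primary component uniformly on compacta precisely when $d-d'$ of the zeros escape to infinity, so that the top degree drops in the limit. This is exactly the degeneration dissected in Lemma \ref{lemma64} (the case $\liminf T_k/t_k<\infty$), whose local rescaling analysis shows that the primary vortex converges while the lost degree floats off to the necks and to infinity --- which is all the Uhlenbeck quotient records. A secondary, bookkeeping obstacle is checking that the away-data of the full stable-map limit always collapses to the single integer $d-d'$, independently of the subsequence and of how the bubbles distribute among vortex and sphere components; here I would again invoke conservation of total degree together with the fact that the primary component is pinned by the fixed marked point.
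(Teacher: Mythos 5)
Your overall strategy coincides with the paper's: the same stratification ${\mb P}^{N(d+1)-1}=\bigcup_{k=0}^{d}N_k$ by top degree, the same definition of the extension (send a point of $N_{d'}$ to the Uhlenbeck class whose primary component is $\wt\Phi^{d'}_{HK}$ of that point, with away-degree $d-d'$), and the same appeal to Ziltener's compactness theorem followed by a subsequence argument; your compact-to-Hausdorff remark is a sensible addition that the paper leaves implicit (though note the Hausdorffness of the Uhlenbeck quotient is itself something you only promise to verify). However, there is one genuine gap, and it sits at the heart of the matter: you justify the identification of the primary component of the limit $\wt{\bf W}^\infty$ with $\Phi_{HK}^{d'}[\psi^\infty]$ by invoking ``continuous dependence of the Kazdan--Warner solution on the polynomials, established in the proof of Corollary \ref{cor63}.'' What that proof establishes is continuity \emph{within a fixed stratum}: there both the approximating tuples and their limit have the same top degree, so the asymptotics $e^{2h}\sim c^2|z|^{2d'}$ are uniform along the family. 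In your situation the approximating tuples have top degree $d$ while the limit has top degree $d'<d$; the asymptotic growth of the solutions jumps from $|z|^{2d}$ to $|z|^{2d'}$, and no continuity statement across this degeneration has been established anywhere in the paper --- proving it is precisely the analytic content of the proposition, so the citation is circular. Your fallback, Lemma \ref{lemma64}, does not apply either: that lemma lives in the adiabatic limit $\lambda_k\to\infty$ over a compact surface, with rescaling at base points, not in the fixed-$\lambda$ setting of a sequence of affine vortices over ${\mb C}$ whose zeros escape to infinity.

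The paper closes exactly this gap using only what you already have in hand. Convergence in the sense of Definition \ref{defn37}, with the marked point $0$ pinning down the translations on the primary component, gives $C^\infty_{loc}$-convergence of the gauge-fixed vortices $\left( d-\partial h_i+\ov\partial h_i,\ e^{-h_i}\left(\wt\psi_{1,i},\ldots,\wt\psi_{N,i}\right)\right)$ to the primary component of $\wt{\bf W}^\infty$; in particular each sequence $e^{-h_i}\wt\psi_{j,i}$ converges uniformly on compact sets. Since $\wt\psi_{j,i}\to\psi_{j,\infty}$ and some $\psi_{j,\infty}\not\equiv 0$, this forces the $h_i$ themselves to converge on ${\mb C}$ to a function $h_\infty$ solving the Kazdan--Warner equation with input $\psi_{1,\infty},\ldots,\psi_{N,\infty}$, and uniqueness of such solutions (Lemma \ref{lemma73}) then identifies the primary component with $\wt\Phi^{d'}_{HK}({\bf x}_\infty)$. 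With this substitution for your ``continuous dependence'' step, the rest of your argument --- the degree/energy bookkeeping for the away-part and the subsequence trick with Hausdorff target --- goes through as written.
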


\begin{proof} We see that we have a filtration ${\mb P}^{N(d+1)-1} \supset {\mb P}^{Nd -1} \supset \cdots \supset {\mb P}^{N-1}$ where the inclusion ${\mb P}^{Nk-1} \to {\mb P}^{N(k+1)-1}$ is given by 
\begin{align}
[ a_{Nk}, a_{Nk-1}, \ldots, a_1 ] \mapsto [0, \ldots, 0, a_{Nk}, a_{Nk-1}, \ldots, a_1].
\end{align}
So $N_d = {\mb P}^{N(d+1)-1} \setminus {\mb P}^{Nd-1}$ and ${\mb P}^{N(d+1)-1} = \cup_{0\leq k \leq d} N_k$. We define the extension of $\Phi_{HK}^d$
\begin{align}
\ov{\Phi}_{HK}^{d, U}: {\mb P}^{N(d+1)-1} \to \ov{\mc M}^{{\mb A}, U}_{1, 1}\left( {\mb C}^N, d \right)
\end{align}
to be the map such that for ${\bf x}\in N_k \subset {\mb P}^{Nd-1}$, $\ov\Phi^{d, U}_{HK}({\bf x})$ is the equivalence class of stable affine vortices whose primary component is equivalent to $\wt\Phi^k_{HK}({\bf x})$. It remains to show that this map is continuous with respect to the degeneration of affine vortices.

Indeed, suppose ${\bf x}_i \in N_d$ and $\lim_{i \to \infty} {\bf x}_i = {\bf x}_\infty \in N_k$ for $k < d$. Represent ${\bf x}_\infty$ by $N$ polynomials $\left\{ \psi_{j, \infty} \right\}_{1\leq j \leq N}$ with maximal degree $k$. Without loss of generality, we can assume that ${\bf x}_i$ can be represented by $N$ polynomials $\left\{ \wt{\psi}_{j, i} \right\}_{1\leq j \leq N}$ such that ${\rm deg} \wt{\psi}_{j, i}$ is independent of $i$ and $d_j:= {\rm deg} \wt{\psi}_{j, i} \geq {\rm deg} \psi_{j, \infty} =: d_{j, \infty}$. Then $d_j > d_{j, \infty}$ implies that $d_j - d_{j, \infty}$ zeroes of $\wt{\psi}_{j, i}$ diverge to infinity. Hence we can write
\begin{align}\label{eqn735}
\wt{\psi}_{j, i}(z) =  \psi_{j, i}(z) \prod_{s=1}^{d_j - d_{j, \infty}} \left( 1 - {z\over w_{j, i, s}} \right)
\end{align}
with $\lim_{i \to \infty} \psi_{j, i}= \psi_{j, \infty}$, $\lim_{i \to \infty} |w_{j, i, s}| = \infty$. Then for each $i$ there exists functions $h_i$ solving the Kazdan-Warner equation
\begin{align}
 \Delta h_i + {1\over 2} \left(  e^{- 2 h_i} \sum_{j=1}^N \left| \wt{\psi}_{j, i} \right|^2 -1 \right) = 0.
\end{align}
By the compactness theorem of Ziltener (Theorem \ref{thm49}), a subsequence of $\left( d - \partial h_i + \ov\partial h_i, e^{- h_i} \left( \wt{\psi}_{1, i}, \ldots, \wt{\psi}_{N, i} \right) \right)$ converges to a $(1, 1)$-marked stable affine vortex $\wt{\bf W}^\infty$; in particular, a subsequence converges uniformly on any compact subset of ${\mb C}$ to the primary component of $\wt{\bf W}^\infty$. This implies that for each $j$, $\left\{ e^{-h_i} \wt{\psi}_{j, i} \right\}_i$ has a convergent subsequence. Since $\lim_{i \to \infty} \wt{\psi}_{j, i}= \psi_{j, \infty}$, this implies that $h_i$ converges on ${\mb C}$ to a smooth function $h_\infty$, which solves the equation
\begin{align}
 \Delta h_\infty + {1\over 2} \left( e^{- 2 h_\infty} \sum_{j=1}^N \left| \psi_{j, \infty} \right|^2 -1 \right)  = 0.
\end{align}
This implies that the primary component of $\wt{\bf W}^\infty$ is equivalent to $\left( \wt{\Phi}_{HK}^k({\bf x}_\infty), 0 \right)$. Hence we have proved that any subsequence of $\Phi_{HK}^d ({\bf x}_i)$ has a subsequence converging to $\ov{\Phi}_{HK}^{d, U}({\bf x}_\infty)$. This implies the continuity of $\ov{\Phi}_{HK}^{d, U}$.
\end{proof}

\begin{prop}
The line bundle associated to the Poincar\'e bundle $\ov{\mc P}_0^U \to \ov{\mc M}^U_{1,1}\left( {\mb C}^N, d \right)$ is isomorphic to ${\mc O}(1) \to {\mb P}^{Nd-1}$.
\end{prop}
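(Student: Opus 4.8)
The plan is to unwind the definition of the Poincar\'e bundle $\ov{\mc P}_0^U$ directly in the coordinate model supplied by the previous proposition, which identifies the moduli space $\ov{\mc M}^U_{1,1}\left({\mb C}^N,d\right)$ with the projective space ${\mb P}^{N(d+1)-1}$ carrying homogeneous coordinates $[a_{jl}]$. Under $\ov{\Phi}^{d,U}_{HK}$ the primary component over $[a_{jl}]$ is the affine vortex $\wt{\Phi}^d_{HK}({\bf x}) = \left( d - \partial h + \ov\partial h, e^{-h}(\psi_1,\ldots,\psi_N) \right)$ with $\psi_j(z) = \sum_l a_{jl} z^l$, and the interior marked point sits at $z_0 = 0$. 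I would first describe $\ov{\mc P}_0^U$ set-theoretically as the space of pairs $\left( (a_{jl}), p \right)$, where $p \in S^1 \subset {\mb C}$ is the framing in the fibre over $z_0 = 0$ of the trivial $U(1)$-bundle of the primary component, taken modulo the isomorphisms of Definition \ref{defn45}.

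The heart of the argument is to read off this equivalence relation. Since $z_0 = 0$ is fixed, the only translation occurring in an isomorphism is the identity, so the relevant freedom is twofold: the positive rescaling ${\mb R}_{>0} \subset {\mb C}^*$, which leaves both the vortex and the framing untouched (the factor $e^{-h}$ absorbs it, as in the discussion of the ${\mb C}^*$-action preceding Corollary \ref{cor63}); and the constant gauge transformations $e^{i\theta} \in U(1)$. A constant gauge $g = e^{i\theta}$ sends $(a_{jl}) \mapsto (e^{-i\theta} a_{jl})$ through $g^{-1}u$, and by the framing condition $p = p' g_{V_0}(z_0)$ of Definition \ref{defn45} it sends $p \mapsto e^{-i\theta} p$. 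After normalizing $\sum_{j,l}|a_{jl}|^2 = 1$ with ${\mb R}_{>0}$, this exhibits $\ov{\mc P}_0^U$ as the quotient of $S^{2N(d+1)-1} \times S^1$ by the diagonal circle action $\theta \cdot (a,p) = (e^{-i\theta} a, e^{-i\theta} p)$, with the principal $U(1)$-structure given by the residual framing action $p \mapsto p e^{i\alpha}$.

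Given this model, identifying the associated complex line bundle $\ov{\mc P}_0^U \times_{U(1)} {\mb C}$ is a direct computation: I would use the principal structure to normalize $p = 1$ and track the induced action of the gauge circle on the remaining ${\mb C}$-factor, obtaining the weight-$(-1)$ action $(a,v) \mapsto (e^{-i\theta} a, e^{-i\theta} v)$ on $S^{2N(d+1)-1} \times {\mb C}$. Comparing with the presentation of the tautological bundle $\mc O(-1)$ (and its dual) as an associated bundle of the Hopf fibration $S^{2N(d+1)-1} \to {\mb P}^{N(d+1)-1}$ identifies this with $\mc O(1)$. As a consistency check, the equivariant evaluation $ev_0$ descends to $N$ sections of the associated bundle whose values are $e^{-h(0)}(a_{10},\ldots,a_{N0})$, matching the $N$ coordinate sections $a_{j0}$ of $\mc O(1)$.

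Finally I would confirm that the identification is global rather than only over the open stratum. The same framing description $\left( (a_{jl}), p \right)$, the same gauge/framing relation, and hence the same weight-$(-1)$ action persist verbatim over each boundary stratum $N_k$ with $k<d$, because the primary component always carries the marked point $z_0 = 0$ and its coefficients vary continuously; this is exactly the continuity of $\ov{\Phi}^{d,U}_{HK}$ established in the previous proposition. The bundle data therefore glue to give $\ov{\mc P}_0^U \times_{U(1)} {\mb C} \cong \mc O(1)$ over all of ${\mb P}^{N(d+1)-1}$. The main obstacle I anticipate is bookkeeping: pinning down the precise sign so that the outcome is $\mc O(1)$ and not $\mc O(-1)$ (this hinges on the conventions for the $U(1)$-action on ${\mb C}^N$, the gauge action $g^*(A,u) = (g^*A, g^{-1}u)$, and the framing relation of Definition \ref{defn45}), together with verifying that the framing genuinely extends continuously across the strata where the degree drops.
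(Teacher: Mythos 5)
Your proposal is correct and takes essentially the same route as the paper: the paper's entire proof is the one-line remark that it suffices to check on each $N_k$, and your stratum-by-stratum computation --- the framing description $\left( (a_{jl}), p \right)$, the diagonal relation $({\bf a}, p) \sim (\lambda {\bf a}, \lambda p)$ coming from the ${\mb C}^*$-scaling/constant-gauge action together with the condition $p = p' g_{V_0}(z_0)$, and the resulting identification of the associated bundle with ${\mc O}(1)$ via the Hopf presentation --- is exactly the check the paper leaves implicit, carried out with the correct signs. One cosmetic point: the base in the statement should read ${\mb P}^{N(d+1)-1}$ (as in the preceding proposition) rather than ${\mb P}^{Nd-1}$, a typo your argument implicitly corrects by working over ${\mb P}^{N(d+1)-1}$ throughout.
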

\begin{proof}
It suffices to check on each $N_k$.
\end{proof}

Now the evaluation $ev_\infty$ doesn't extends to $\ov{\mc M}_{1,1}^U\left( {\mb C}^N, d \right)$, but it doesn't affect our computation of the Kirwan map. Indeed we can blow up ${\mb P}^{N(d+1)-1}$ along ${\mb P}^{Nd-1}= \cup_{0\leq k \leq d} N_k$ on which $ev_\infty$ extends continuously. The blown-up is denoted by $\ov{N}_d^*$.

Now we compute the quantum Kirwan map, the result of which is of no surprise. $H_{U(1)}^*\left( {\mb C}^N \right)$ is generated by the universal first Chern class $u$ of degree 2. For any $m\geq 0$, write $m = d_m N + r$ with $0 \leq r \leq N-1$. Denote by $c\in H^2 \left( {\mb P}^{N-1}\right)$ the generator. Then by the definition (\ref{eqn57})
\begin{multline}
\kappa_Q( u^m) = \sum_{d\geq 0} \sum_{0 \leq i \leq N-1} \left\langle (ev_0)^*( u^m)  \cup (ev_\infty)^*( c^i), \left[\ov{N}_d^*\right]  \right\rangle \cdot c^{N-1-i} \otimes q^d\\
= \left\langle (ev_0)^*( u^m)  \cup (ev_\infty)^*( c^{N-1-r}), \left[\ov{N}_{d_m}^*\right]  \right\rangle \cdot c^r \otimes q^{d_m}= c^r \otimes q^{d_m} = \kappa_Q(u) *_q \kappa_Q(u) *_q \cdots *_q \kappa_Q(u).
\end{multline}
Hence $\kappa_Q: H_{U(1)}^*({\mb C}^N) \to QH^*( {\mb P}^{N-1}, \Lambda)$ is a ring homomorphism. It extends to a homomorphism 
\begin{align}
\kappa_Q^\Lambda: H_{U(1)}^*({\mb C}^N, \Lambda) \to QH^* ( {\mb P}^{N-1} , \Lambda)
\end{align}
linearly over $\Lambda$, with kernel generated by $q- u^N$.

\bibliography{symplectic_ref}
	
\bibliographystyle{amsplain}
\end{document}